\DeclareMathOperator{\nr}{nr}
\DeclareMathOperator{\Frac}{Frac}
\DeclareMathOperator{\cs}{cs}
\DeclareMathOperator{\ram}{ram}
\DeclareMathOperator{\Ann}{Ann}
\DeclareMathOperator{\Hom}{Hom}
\DeclareMathOperator{\Ind}{Ind}
\DeclareMathOperator{\Gal}{Gal}
\DeclareMathOperator{\rank}{rank}
\DeclareMathOperator{\ord}{ord}
\DeclareMathOperator{\Cl}{Cl}
\DeclareMathOperator{\Pic}{Pic}
\DeclareMathOperator{\Sel}{Sel}
\DeclareMathOperator{\coker}{coker}
\DeclareMathOperator{\GL}{GL}
\DeclareMathOperator{\cyc}{cyc}
\DeclareMathOperator{\ac}{ac}
\DeclareMathOperator{\bad}{bad}
\DeclareMathOperator{\cont}{cont}
\DeclareMathOperator{\cd}{cd}
\DeclareMathOperator{\height}{ht}
\newcommand{\scrF}{\mathscr{F}}
\newcommand{\QQ}{\mathbb{Q}}
\newcommand{\ZZ}{\mathbb{Z}}
\newcommand{\Z}{\mathcal{Z}}
\newcommand{\OK}{\mathcal{O}_K}
\newcommand{\X}{\mathfrak{X}}
\newcommand{\Y}{\mathfrak{Y}}
\renewcommand{\lg}[1][G]{\Lambda(#1)}
\newcommand{\lgK}[1][G]{\widetilde{\Lambda(#1)}}
\newcommand{\qh}{\mathcal{Q}(H)}
\newcommand{\qhK}{\widetilde{\qh}}
\newcommand{\linf}{\mathcal{L}}
\newcommand{\Zp}{\ZZ_p}
\newcommand*{\EC}{\mathsf{E}}
\DeclareSymbolFont{cyrletters}{OT2}{wncyr}{m}{n}
\DeclareMathSymbol{\Sha}{\mathalpha}{cyrletters}{"58}
\theoremstyle{plain}
\newtheorem*{Reform*}{Reformulation}
\newtheorem{Th}{Theorem}[section]
\newtheorem{Lemma}[Th]{Lemma}
\newtheorem{prop}[Th]{Proposition}
\newtheorem{cor}[Th]{Corollary}
\theoremstyle{plain}
\newtheorem*{intr@thm}{\intr@thmname}
\newenvironment{introtheorem}[1]{%
	\def\intr@thmname{Theorem #1}
	\begin{intr@thm}}
	{\end{intr@thm}}
\newtheorem*{c@njecture}{\conjn@name}
\newcommand{\myl@bel}[2]{%
 \protected@write \@auxout {}{\string \newlabel {#1}{{#2}{\thepage}{#2}{#1}{}} }%
 \hypertarget{#1}{}
  } 
\newenvironment{labelledconj}[3][]%
  {
    \def\conjn@name{#2}
    \begin{c@njecture}[{#1}]\myl@bel{#3}{#2}
  }
  {
    \end{c@njecture}
  }
\theoremstyle{definition}
\newtheorem{Defi}[Th]{Definition}
\theoremstyle{remark}
\newtheorem{Remark}[Th]{Remark}
\newtheorem*{Remark*}{Remark}
\newsavebox{\fieldsdiagram}
\begin{document}
\title[Fine Selmer Groups in abelian $p$-adic Lie extensions]{Structure of Fine Selmer Groups \\ in abelian $p$-adic Lie extensions}
\author[D.~Kundu]{Debanjana Kundu}
\address[Kundu]{Fields Institute \\ University of Toronto \\
 Toronto ON, M5T 3J1, Canada}
\email{dkundu@math.toronto.edu}

\author[F.~A.~E.~Nuccio]{Filippo A.~E.~Nuccio Mortarino Majno di Capriglio}
\address[Nuccio Mortarino Majno di Capriglio]{Université Jean Monnet Saint-Étienne, CNRS, Institut Camille Jordan UMR 5208, \mbox{F-42023}, Saint-Étienne, France}
\email{filippo.nuccio@univ-st-etienne.fr}

\author[R.~Sujatha]{Sujatha Ramdorai}
\address[Sujatha]{Mathematics Department, 
1984, Mathematics Road,
University of British Columbia, 
Vancouver, Canada, V6T1Z2}
\email{sujatha@math.ubc.ca}

\date{April 21, 2023}

\keywords{Fine Selmer groups, pseudonull, Conjecture A, Conjecture B, Generalized Greenberg's Conjecture}
\subjclass[2020]{Primary 11R23, 11G05; Secondary 11R34}

\begin{abstract}
This paper studies fine Selmer groups of elliptic curves in abelian $p$-adic Lie extensions.
A class of elliptic curves are provided where both the Selmer group and the fine Selmer group are trivial in the cyclotomic $\Zp$-extension.
The fine Selmer groups of elliptic curves with complex multiplication are shown to be pseudonull over the trivializing extension in some new cases.
Finally, a relationship between the structure of the fine Selmer group for some CM elliptic curves and the Generalized Greenberg's Conjecture is clarified.
\end{abstract}

\maketitle

%------------------------------------------
\section{Introduction} \label{section: intro}
%------------------------------------------
The fine Selmer group (see \S\ref{fine selmer definition}) is a module over an Iwasawa algebra that is of interest in the arithmetic of elliptic curves.
It plays a key role in the formulation of the main conjecture in Iwasawa theory.
Moreover, it enables us to propose analogues of important conjectures in classical Iwasawa theory to elliptic curves over certain $p$-adic Lie extensions of their field of definition.
J.~Coates and the third named author initiated a systematic study of the structure of fine Selmer groups and proposed two conjectures (see~\cite[Conjectures A and B]{CS05}).
While \ref{conj:A} is a generalization of the \ref{conj:Iwasawa} to the context of elliptic curves, \ref{conj:B} is in the spirit of generalizing R.~Greenberg's pseudonullity conjecture to elliptic curves.
Recently, there has been a renewed interest in studying pseudonull modules over Iwasawa algebras,~\cite{BCG+15, LP19_pseudonull}.
It is thus natural to investigate \ref{conj:B}, and this article makes progress in this direction.
These conjectures have been generalized to fine Selmer groups of ordinary Galois representations associated to modular forms in \cite{SJ11}, and their $\mod{p}$-versions for supersingular elliptic curves have been studied by the second and third author in \cite{NucSuj21}.
This article restricts attention to the fine Selmer groups of elliptic curves, with good reduction at a prime $p$, over abelian $p$-adic Lie extensions of the base field.

We now outline the main results in the paper.
Given a number field $F$ and an odd prime number $p$, let $\EC/F$ be an elliptic curve, with good reduction at all the primes of $F$ that lie above $p$.
Consider an admissible $p$-adic Lie extension ${\linf}$ of $F$ (see \S\ref{prelim: number field} for the precise definition) with Galois group $\Gal({\linf}/F)=:{G_{\linf/F}}$.
The dual fine Selmer group of $\EC$ at a prime $p$ over ${\linf}$ is a finitely generated module over the associated Iwasawa algebra (see \S\ref{fine selmer definition}).
While \ref{conj:A} asserts that the dual fine Selmer group over the cyclotomic $\Zp$-extension $F_{\cyc}/F$ is finitely generated as a $\Zp$-module,~\ref{conj:B} is an assertion on the structure of the dual fine Selmer group over admissible $p$-adic Lie extensions of dimension at least 2.
This conjecture predicts that the dual fine Selmer group over any admissible $p$-adic Lie extension is pseudonull as a module over the associated Iwasawa algebra.
In this article, both conjectures are established in previously unknown cases.
Using a result of Greenberg, we prove a general theorem that gives sufficient conditions for the dual fine Selmer group of $\EC$ over the cyclotomic $\Zp$-extension $F_{\cyc}$ to be trivial.
More precisely, we have the following theorem (we refer the reader to Corollary~\ref{cor: conj A for CM} for finer estimates):
\begin{introtheorem}{\ref{Theorem: Construction}}
Let $\EC/F$ be the base-change of a rational elliptic curve $\EC/\QQ$.
Suppose that it has rank $0$ over $F$ and that the Shafarevich--Tate group of $\EC/F$ is finite.
When $\EC$ has CM by an order of an imaginary quadratic field $K$, assume further that the Galois closure of $F$, denoted by $F^c$, contains $K$.
Then, the Selmer group $\Sel\left(\EC/F_{\cyc}\right)$ is trivial for a set of prime numbers of density at least $\frac{1}{[F^c:\QQ]}$.
In particular, \ref{conj:A} holds for $\EC/F$ at all such primes.
\end{introtheorem}

Denote by $F(\EC_{p^\infty})$ the field obtained by adjoining the coordinates of all $p$-power torsion points.
When $p$ is a prime of good ordinary reduction, using a result of B.~Perrin-Riou~\cite[Lemme~1.1(\romannumeral 1) and Lemme~1.3]{PR81} we prove that \ref{conj:B} holds for special classes of admissible $p$-adic Lie extensions whenever the dual fine Selmer group over the cyclotomic extension is finite for a CM elliptic curve.
We obtain the following result:
\begin{introtheorem}{\ref{theorem: main theorem CM}}
Let $\EC/F$ be an elliptic curve defined over a number field $F$.
Suppose that $F$ contains the imaginary quadratic field $K$ and that $\EC$ has CM by $\OK$.
Assume further that $p\geq 3$ is a prime of good ordinary reduction that splits in $K$
and that $\Gal(F(\EC_{p^\infty})/F)\simeq \Zp^2$.
If the fine Selmer group over the cyclotomic $\Zp$-extension $F_{\cyc}/F$ is finite, then \ref{conj:B} holds for $(\EC,F(\EC_{p^\infty}))$.
\end{introtheorem}

Over the cyclotomic $\Zp$-extension $F_{\cyc}$ of $F$, there is a connection between the Galois group of the maximal abelian unramified pro-$p$ extension of $F_{\cyc}$ and the fine Selmer groups of elliptic curves defined over $F$, see~\cite[Theorem~3.4]{CS05}.
This phenomenon can be extended to (both abelian and non-abelian) admissible $p$-adic Lie extensions of higher dimension.
In fact, \ref{conj:B} can be viewed as an elliptic curve analogue of an old conjecture of Greenberg on Galois modules associated with pro-$p$ Hilbert class fields (see~\S\ref{section: Conjectures} for the precise statement).
This has been explored in~\cite[p.~827]{CS05}.
It is therefore pertinent to investigate the precise connections between \ref{conj:B} for admissible, abelian $p$-adic Lie extensions, and Greenberg's conjecture.
For CM elliptic curves, the \ref{conj:GGC} is shown to be equivalent to \ref{conj:B} for certain admissible pro-$p$, $p$-adic Lie extensions in Theorem~\ref{thm:appendix}.
This result provides a framework for proving new cases of the~\ref{conj:GGC}.
In particular, we prove the following result\footnote{
The proof of Theorem~\ref{Thm: conj B implies GGC in several} does not require $\EC$ to be defined over $K$.
This formulation is used in this introduction, for simplicity.}.

\begin{introtheorem}{\ref{Thm: conj B implies GGC in several} and Corollary~\ref{cor for GGC of K'Ep}}
Let $K/\QQ$ be an imaginary quadratic field.
If there exists one CM elliptic curve $\EC/K$ such that the dual fine Selmer group is pseudonull over the trivializing extension $K(\EC_{p^\infty})$, then the \ref{conj:GGC} holds for $K$ and $K(\EC_p)$.
\end{introtheorem}

Little is known about~\ref{conj:B} and the~\ref{conj:GGC} in full generality.
We recall some cases where~\ref{conj:B} is proven in the literature.
When there is a unique prime above $p$ in the $p$-adic Lie extension of interest~\ref{conj:B} is proven in \cite[Theorem~1.3]{Och09} and \cite[\S~4]{She18}.
Also, when the $p$-adic Lie extension has large dimension there are explicit examples where~\ref{conj:B} is known, detailed in~\cite[Example~23]{Bha07}.
Certain analogues of \ref{conj:B} have also been considered in \cite{Jha12, LP19_pseudonull}.
For evidence towards the~\ref{conj:GGC} (both theoretical and computational) see~\cite[Remark~1.3]{Tak20}, as well as \cite{Min86, McC01, Oza01, DV05, Sha08, Fuj17}.
As per the knowledge of the authors, most results in this latter direction require the crucial hypothesis that $p$ does not divide the class number of the number field.
One exception is the result of R.~Sharifi and W.~McCallum, where the conjecture for $\QQ(\mu_p)$ is proven under certain assumptions on a cup-product (see \cite[Corollary~10.5]{MS03}); another is of Sharifi \cite[Theorem~1.3]{Sha08}, where computational evidence for the \ref{conj:GGC} is provided when $F=\QQ(\mu_p)$ and $p<1000$ is an irregular prime.
Our approach suggests a new line of attack for the \ref{conj:GGC} even in the case when $p$ divides the class number of the base field.

The paper consists of five sections.
Section~\ref{section:Preliminaries} is preliminary in nature; wherein we recall the precise assertions of \ref{conj:A}, \ref{conj:B}, and the~\ref{conj:GGC} and we introduce the main objects of study.
In Section~\ref{section:Finite Fine Selmer Groups in the Cyclotomic Extension}, new evidence for \ref{conj:A} is provided by proving the triviality of the fine Selmer group over the cyclotomic extension.
Some simple cases of \ref{conj:B} are proven in Section~\ref{section: Special Cases}.
In Section~\ref{section actually relating ggc to conj B} the relation between \ref{conj:B} for CM elliptic curves and the~\ref{conj:GGC} is clarified.

%------------------------------------------
\section{Preliminaries} \label{section:Preliminaries}
%------------------------------------------

Throughout this article, $p$ denotes an odd prime number.
For an abelian group $M$ and a positive integer $n$, 
write $M_{p^n}$ for the subgroup of elements of $M$ annihilated by $p^n$.
Put
\[
M_{p^\infty} :=\underset{n\geq 1}{\bigcup} \quad M_{p^n}, \quad T_p(M):=\varprojlim M_{p^n}.
\]
and, when $M$ is a discrete $p$-primary (\emph{resp.}~compact pro-$p$) abelian group $M$, its Pontryagin dual is defined as
\[
M^\vee = \Hom_{\cont}(M,\QQ_p/\Zp).
\]
Given any $p$-adic analytic group $G$, its \emph{Iwasawa algebra} is defined as
\[
\lg=\varprojlim _{U}\Zp[G/U]
\]
for $U$ running through all open, normal subgroups of $G$.
When $G$ is compact and $p$-valued in the sense of M.~Lazard, $\lg$ is a noetherian Auslander regular ring (see~\cite[Proposition~6.2]{CoaSchSuj03}).
In the special case when $G$ is abelian with no elements of order $p$, there is an isomorphism
\[
\lg\simeq \Zp\llbracket T_1, \ldots, T_d\rrbracket.
\]
where $d$ is the dimension of $G$ as a $p$-adic analytic manifold.
If $M$ is a compact (\emph{resp.}~discrete) $\lg$-module then its Pontryagin dual is discrete (\emph{resp.}~compact).
Given a finitely generated $\lg$-module $M$, its Krull dimension is defined as the Krull dimension of $\lg/\Ann(M)$ and it is denoted $\dim(M)$.

\subsection{}
Suppose that $G$ is an abelian $p$-analytic group without elements of order $p$.
A finitely generated $\lg$-module $M$ is \emph{torsion} (\emph{resp.}~\emph{pseudonull}) if $\dim(M)\leq \dim\left(\lg \right)-1$ (\emph{resp.}~$\dim(M) \leq \dim\left(\lg\right)-2$).
Equivalently (see \cite[p.~273]{Ven02}), $M$ is pseudonull if there exists a prime ideal $\mathfrak{p}$ such that
\[
\Ann_{\lg}(M):=\{a\in\lg: aM=0\}\subseteq \mathfrak{p}
\]
and $\height(\mathfrak{p})\geq 2$ (see \cite[Definition~5.1.4]{NSW08}).

Let $W$ (\emph{resp.}~$M$) be a discrete (\emph{resp.}~compact) $G$-module.
The profinite cohomology groups (\emph{resp.}~homology groups) of $W$ (\emph{resp.}~$M$) are denoted $H^i(G,W)$ (\emph{resp.}~$H_i(G,M)$).
The subgroup of elements of $W$ fixed by $G$ is denoted $W^G$, and $M_G$ denotes the largest quotient of $M$ on which $G$ acts trivially.

\subsection{}
\label{prelim: number field}
For a number field $F$, denote by $F_{\cyc}$ its cyclotomic $\Zp$-extension.
Suppose that $S=S(F)$ is a finite set of primes of $F$ containing the primes above $p$ and the archimedean primes.
Let $F_S$ be the maximal extension of $F$ unramified outside $S$ and set $G_S(F)=\Gal(F_S/F)$.
For any (finite or infinite) extension $\linf/F$ contained in $F_S$, denote by $G_S(\linf)$ the Galois group $\Gal(F_S/\linf)$.
Throughout the paper, the focus is on $S$-admissible $p$-adic Lie extensions $\linf/F$, in the following sense:
\begin{Defi}
An \emph{$S$-admissible} $p$-adic Lie extension is a Galois extension $\linf/F$ satisfying the following conditions:
\begin{itemize}
\item the group $\Gal\left({\linf}/F\right)$ is a pro-$p$, $p$-adic Lie group with no elements of order $p$;
\item the field ${\linf}$ contains the cyclotomic $\Zp$-extension $F_{\cyc}$;
\item the field ${\linf}$ is contained in $F_S$.
\end{itemize}
\end{Defi}

Next, we record some conjectures pertaining to the modules associated with maximal abelian unramified pro-$p$ extension of admissible $p$-adic Lie extensions.
The first conjecture we mention was formulated by K.~Iwasawa in \cite[pp.~1--2]{Iwa73_AC} for the cyclotomic $\Zp$-extension.

\begin{labelledconj}{Iwasawa $\mu=0$ Conjecture}{conj:Iwasawa}
Let $L(F_{\cyc})$ denote the maximal abelian unramified pro-$p$ extension of $F_{\cyc}$ and set
\[
X_{\nr}^{F_{\cyc}} = \Gal(L(F_{\cyc})/F_{\cyc}).
\]
Then, the $\mu$-invariant associated with $X_{\nr}^{F_{\cyc}}$ is trivial.
\end{labelledconj}

In \cite[Theorem~5]{Iwa73}, Iwasawa proved that $X_{\nr}^{F_{\cyc}}$ is a torsion $\lg[\Gamma]$-module; in view of this result, the~\ref{conj:Iwasawa} is equivalent to saying that $X_{\nr}^{F_{\cyc}}$ is finitely generated over $\Zp$.
When $F/\QQ$ is an abelian extension, the \ref{conj:Iwasawa} is known to be true by the work \cite{FW79} by B.~Ferrero and L.~Washington.

Next, we mention a conjecture of Greenberg (see~\cite[Conjecture~3.5]{Gre01}) which is formulated for certain abelian $p$-adic Lie extensions.

\begin{labelledconj}{Generalized Greenberg's Conjecture}{conj:GGC}
Let $\widetilde{F}$ denote the compositum of all $\Zp$-extensions of $F$ and let $L(\widetilde{F})$ denote the maximal abelian unramified pro-$p$ extension of $\widetilde{F}$.
Then $\Gal \left(L(\widetilde{F})/ \widetilde{F}\right)$ is a pseudonull module over the Iwasawa algebra $\lg[\Gal (\widetilde{F}/F)]=\Zp\llbracket \Gal (\widetilde{F}/F) \rrbracket $.
\end{labelledconj}

\subsection{}
\label{fine selmer definition}
Fix a number field $F$ and an admissible extension $\linf/F$.
Write $G_{\linf/F}$ for the compact, pro-$p$, $p$-adic Lie group $\Gal(\linf/F)$ and $\lg[G_{\linf/F}]$ for the associated Iwasawa algebra.
The main objects of study will be modules over $\lg[G_{\linf/F}]$ that arise in Iwasawa theory, such as the Selmer group and the fine Selmer group.
Let $\EC$ be an elliptic curve defined over $F$.
Choose a set $S=S(F)$ containing the primes above $p$, the primes of bad reduction of $\EC/F$, and the archimedean primes.
Write $S\supseteq S_p \cup S_{\bad} \cup S_{\infty}$, where the notation $S_p$, $S_{\bad}$, and $S_{\infty}$ are self-explanatory.
For a finite extension $L/F$ and a prime $v$ of $F$, define 
\begin{equation}
\label{defi of J and K}
 J_v(L) = \bigoplus_{w\mid v} H^1\left(L_w, \EC\right)(p), \qquad \text{and } \qquad
 K_v(L) = \bigoplus_{w\mid v} H^1\left(L_w, \EC_{p^\infty}\right)
\end{equation}
where the direct sum is taken over all primes $w$ of $L$ lying above $v$.
Taking direct limits, define
\begin{align*}
 J_v(\linf) = \varinjlim_{L}J_v(L), \qquad \text{ and } \qquad
 K_v(\linf) = \varinjlim_{L}K_v(L)
\end{align*}
where $L$ varies over finite sub-extensions of $\linf/F$.
Given any finite extension $L/F$ contained in $\linf$, the \emph{$p$-primary Selmer group} $\Sel\left(\EC/L\right)$ and the \emph{$p$-primary fine Selmer group} $R\left(\EC/L\right)$ are defined by the exactness of the following sequences:
\begin{align*}
\begin{matrix}
0&\longrightarrow &\Sel\left(\EC/L\right)&\longrightarrow &H^1\left(G_S(F), \EC_{p^\infty}\right) &\longrightarrow& \displaystyle{\bigoplus_{v\in S(L)}J_v(L)},
\cr 0&\longrightarrow &R\left(\EC/L\right) &\longrightarrow &H^1\left(G_S\left(F\right), \EC_{p^\infty}\right) &\longrightarrow & \displaystyle{\bigoplus_{v\in S(L)} K_v(L)}.
\end{matrix}
\end{align*}
Moreover, by \cite[Equation~(58)]{CS05} we can relate these groups as follows 
\begin{equation}\label{eq 58 cs05}
0 \longrightarrow R\left(\EC/L\right) \longrightarrow \Sel\left(\EC/L\right) \longrightarrow \bigoplus_{w\in S_p(L)} \left(\EC\left(L_w\right) \otimes \QQ_p/\ZZ_p \right).
\end{equation}
Define $\Sel(\EC/\linf) =\varinjlim_L \Sel(\EC/L) $ and $R(\EC/\linf) =\varinjlim_L R(\EC/L)$.
It can then be shown (see \cite[pp.~14--15]{CS00_book} and \cite[Equation~(46)]{CS05}) that
\begin{align*}
\Sel\left(\EC/\linf\right) \cong &\ker\left( H^1\left(G_S\left(\linf\right), \EC_{p^\infty}\right) \longrightarrow \bigoplus_{v\in S} J_v(\linf) \right) \intertext{ and }
R\left(\EC/\linf\right) \cong &\ker\left( H^1\left(G_S\left(\linf\right), \EC_{p^\infty}\right) \longrightarrow \bigoplus_{v\in S} K_v(\linf) \right).
\end{align*}
Taking direct limits of \eqref{eq 58 cs05}, we obtain that
\[
0 \longrightarrow R\left(\EC/\linf\right) \longrightarrow \Sel\left(\EC/\linf\right) \longrightarrow \varinjlim_L \bigoplus_{w\in S_p(L)} \left(\EC\left(L_w\right) \otimes \QQ_p/\ZZ_p \right).
\]
Finally, we set a notation for the Pontryagin dual of these groups:
\begin{equation}\label{eqn: dual selmer and fine selmer}
\X(\EC/{\linf}) :=\Sel\left(\EC/\linf\right)^\vee \quad \text{and} \quad \Y(\EC/{\linf}):= R\left(\EC/\linf\right)^\vee.
\end{equation}
These are compact $\lg[G_{\linf/F}]$-modules and it follows from~\eqref{eq 58 cs05} that $\Y(\EC/{\linf})$ is a quotient of $\X(\EC/{\linf})$.

In this paper, we are interested in a certain class of $S$-admissible $p$-adic Lie extensions generated by the $p$-primary torsion points of an elliptic curve.
When the elliptic curve $\EC/F$ is clear from the context, we write
\[
F_{\infty}:=\underset{n\geq 1}{\bigcup} \,F(\EC_{p^n}).
\]
It follows from the Weil pairing that $F_{\infty}$ contains $F_{\cyc}$ and the choice of $S$ ensures that $F_\infty$ is contained in $F_S$.
The Galois group $\Gal(F_{\infty}/F)$ has no $p$-torsion if $p\geq 5$ (see, for example, \cite[Lemma~4.7]{How_thesis}) and contains an open, normal, pro-$p$ subgroup (see \cite[Corollary~8.34]{DdSMS03}).
In fact, the extension $F_{\infty}/F(\EC_p)$ is always pro-$p$ and hence $S$-admissible.
If $\EC$ is an elliptic curve with CM, and $F$ contains the field of complex multiplication, then $\Gal(F_{\infty}/F)$ contains an open subgroup which is abelian and isomorphic to $\Zp^2$.

\subsection{}
\label{section: Conjectures}
Fix a number field $F$.
In this section, we record the two conjectures formulated by Coates and the third named author in \cite{CS05} which will be studied in this paper.

\begin{labelledconj}{Conjecture~\textup{A}}{conj:A}\textup{(}\cite[Section~3]{CS05}\textup{)}
Let $\EC$ be an elliptic curve defined over $F$.
Then $\Y(\EC/F_{\cyc})$ is a finitely generated $\Zp$-module.
\end{labelledconj}

This conjecture is closely related to the~\ref{conj:Iwasawa}.
Their connection can be made precise:

\begin{Th}[{\cite[Theorem~3.4]{CS05}}]
\label{thm: cs05 thm 3.4}
Let $\EC/F$ be an elliptic curve and suppose that $\Gal(F_\infty/F)$ is pro-$p$.
Then \ref{conj:A} for $\EC/F$ is equivalent to the~\ref{conj:Iwasawa} for $F$.
\end{Th}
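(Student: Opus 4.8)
\emph{Proof plan.}
The idea is that, under the standing hypothesis, the elliptic curve and the ground field become indistinguishable modulo $p$, so that \ref{conj:A} and the~\ref{conj:Iwasawa} turn out to be two incarnations of the single assertion that a certain mod-$p$ fine Selmer group over $F_{\cyc}$ is finite. The first step is to reduce each conjecture to such a finiteness statement. Writing $\Lambda\simeq\Zp\llbracket T\rrbracket$ for the Iwasawa algebra of $\Gamma=\Gal(F_{\cyc}/F)$, a finitely generated $\Lambda$-module $M$ is finitely generated over $\Zp$ if and only if $M/pM$ is finite (if $M/pM$ is finite then $T^NM\subseteq pM$ for some $N$, whence $T^{kN}M\subseteq p^kM$ for all $k$, and one extracts a finitely generated $\Zp$-submodule $M'$ with $M=M'+pM$; then $M/M'$ is a $p$-divisible finitely generated $\Lambda$-module, hence zero). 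Applying this to $\Y(\EC/F_{\cyc})$ shows that \ref{conj:A} for $\EC/F$ is equivalent to the finiteness of $R(\EC/F_{\cyc})[p]$, and applying it to $X_{\nr}^{F_{\cyc}}$ — which is $\Lambda$-torsion by Iwasawa's theorem recalled above, so that the~\ref{conj:Iwasawa} is equivalent to $X_{\nr}^{F_{\cyc}}$ being finitely generated over $\Zp$ — shows that the~\ref{conj:Iwasawa} for $F$ is equivalent to the finiteness of $X_{\nr}^{F_{\cyc}}/p$.

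For a discrete $p$-torsion $G_S(F_{\cyc})$-module $A$ write
\[
R(A/F_{\cyc})=\ker\bigl(H^1(G_S(F_{\cyc}),A)\to\textstyle\bigoplus_{v\in S}\bigoplus_{w\mid v}H^1(F_{\cyc,w},A)\bigr),
\]
so that the paper's fine Selmer group is $R(\EC_{p^\infty}/F_{\cyc})$. On the classical side, the next step exploits the hypothesis that $\Gal(F_\infty/F)$ is pro-$p$: since $\Gal(F(\zeta_p)/F)$ is then a $p$-group of order dividing $p-1$, one has $\zeta_p\in F$, so $\mu_p\simeq\ZZ/p$ as $G_S(F)$-modules. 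Taking Poitou--Tate duality over the finite layers of $F_{\cyc}$ and passing to the limit, the Pontryagin dual of $R(\ZZ/p/F_{\cyc})=R(\mu_p/F_{\cyc})$ is identified with the mod-$p$ reduction of the Galois group of the maximal abelian pro-$p$ extension of $F_{\cyc}$ unramified everywhere and completely split above $S$; as $F_{\cyc}/F$ is finitely decomposed at every prime, this group differs from $X_{\nr}^{F_{\cyc}}/p$ by a finitely generated $\Zp$-module. Hence the~\ref{conj:Iwasawa} for $F$ is equivalent to the finiteness of $R(\ZZ/p/F_{\cyc})$.

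On the elliptic side, one compares $R(\EC_{p^\infty}/F_{\cyc})[p]$ with $R(\EC[p]/F_{\cyc})$ using the exact sequence $0\to\EC[p]\to\EC_{p^\infty}\xrightarrow{\,p\,}\EC_{p^\infty}\to0$ and a snake-lemma chase across the sequences defining the two fine Selmer groups: all error terms there are finite because $S$ is finite, $F_{\cyc}$ is finitely decomposed, and the pertinent global and local groups $\EC(\,\cdot\,)_{p^\infty}/p$ are finite. Finally, since $\Gal(F(\EC_p)/F)$ is a finite $p$-subgroup of $\GL_2(\mathbb{F}_p)$, after a suitable choice of $\mathbb{F}_p$-basis of $\EC[p]$ it sits inside the unipotent upper-triangular subgroup, so $\EC[p]$ carries a $G_S(F)$-stable filtration $0\subset\ZZ/p\subset\EC[p]$ with both graded pieces isomorphic to the trivial module $\ZZ/p$. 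D\'evissage along this filtration (once more with only finite error terms, since $S$ is finite and $F_{\cyc}$ is finitely decomposed) shows that $R(\EC[p]/F_{\cyc})$ is finite if and only if $R(\ZZ/p/F_{\cyc})$ is. Concatenating all the equivalences above yields that \ref{conj:A} for $\EC/F$ holds if and only if $R(\ZZ/p/F_{\cyc})$ is finite if and only if the~\ref{conj:Iwasawa} for $F$ holds.

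The step I expect to be the main obstacle is the bookkeeping of the finite error terms, and specifically the two comparisons that pass through global duality over the infinite extension $F_{\cyc}$: matching the local conditions attached to $\EC_{p^\infty}$ against those attached to $\EC[p]$ at the (finitely many) primes above $S$, and converting the finite-layer Poitou--Tate sequences into the stated relation between $R(\mu_p/F_{\cyc})$ and $X_{\nr}^{F_{\cyc}}/p$ — both needing a little care with $H^2$ and with the limits, and both relying on $S$ being finite and $F_{\cyc}/F$ finitely decomposed. By contrast, the conceptual heart of the argument — that the pro-$p$ hypothesis simultaneously forces $\zeta_p\in F$ and realizes $\EC[p]$ as a two-step extension of trivial $G_S(F)$-modules — is exactly what lets one bypass any deep input on $\mu$-invariants in towers of number fields.
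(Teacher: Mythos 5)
Your plan is essentially correct, but note that the paper itself contains no proof of this statement: it is quoted verbatim from \cite[Theorem~3.4]{CS05}, so the comparison is really with the original argument there. Your strategy shares its two pillars with that argument — reduce each conjecture, via Nakayama over $\lg[\Gamma]$, to the finiteness of a mod-$p$ object ($R(\EC/F_{\cyc})[p]$, respectively $X_{\nr}^{F_{\cyc}}/p$), and exploit that the pro-$p$ hypothesis forces $\mu_p\subset F$ and makes $\EC_p$ a successive extension of copies of the trivial module, so that the elliptic mod-$p$ fine Selmer group is commensurable with the one for $\mu_p$, whose dual is $X_{\cs}^{F_{\cyc}}/p$ up to finite error (this is exactly the identification~\eqref{eqn:hydfinesel(1.2)} combined with $R(\mu_p/F_{\cyc})\simeq R(\mu_{p^\infty}/F_{\cyc})[p]$, valid because $\mu_{p^\infty}(F_{\cyc})$ and its local counterparts are divisible, and with the fact that $X_{\nr}^{F_{\cyc}}\twoheadrightarrow X_{\cs}^{F_{\cyc}}$ has finitely generated $\Zp$-kernel coming from the finitely many primes of $F_{\cyc}$ above $S$). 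Where you genuinely deviate is in how the pro-$p$ hypothesis is used: Coates--Sujatha pass to the finite $p$-extension $F(\EC_p)$, over which $\EC_p\simeq\mu_p\oplus\mu_p$, and transfer the two statements between $F$ and $F(\EC_p)$ (Iwasawa's theorem on the invariance of $\mu=0$ under finite $p$-extensions on the classical side, a restriction--corestriction comparison on the Selmer side), whereas you stay over $F$ and run a d\'evissage along the unipotent filtration $0\subset\ZZ/p\subset\EC_p$; this buys you independence from Iwasawa's $p$-extension theorem at the cost of the filtration bookkeeping. On that bookkeeping, your flagged worry about $H^2$-terms is unfounded: for the direction ``$R(\ZZ/p)$ finite $\Rightarrow R(\EC_p)$ finite'' one uses the map $R(\EC_p)\to R(\EC_p/\!\left(\ZZ/p\right))$, whose kernel is controlled by $R(\ZZ/p)$ together with the finitely many local $H^0$-terms at $S(F_{\cyc})$, and for the converse one uses $R(\ZZ/p)\to R(\EC_p)$, whose kernel is a quotient of the finite group $H^0(G_S(F_{\cyc}),\ZZ/p)$; likewise the comparison of $R(\EC_{p^\infty}/F_{\cyc})[p]$ with $R(\EC_p/F_{\cyc})$ only involves $H^0$-type error terms, finite by Ribet's theorem (or mere cofinite generation) and finite decomposition. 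So all error terms are visibly finite and the proposal completes to a proof of the quoted equivalence.
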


In \cite[Theorems~A,C]{Ari14}, there are more examples for which~\ref{conj:A} holds.

The dimension theory for finitely generated modules over Iwasawa algebras allows framing an analogue of the \ref{conj:GGC} in a more general setting.
This is \ref{conj:B} and concerns the dual fine Selmer group over admissible $p$-adic Lie extensions (not necessarily abelian) of dimension $\geq 2$.
It asserts that this module is smaller than intuitively expected.

\begin{labelledconj}{Conjecture~\textup{B}}{conj:B}\textup{(}\cite[Section~4]{CS05}\textup{)}
Let $\EC/F$ be an elliptic curve and let $\linf/F$ be an $S$-admissible $p$-adic Lie extension such that $G_{\linf/F}=\Gal(\linf/F)$ has dimension strictly greater than 1.
Then \ref{conj:A} holds for $\EC/F$ and $\Y\left(\EC/\linf\right)$ is a pseudonull $\lg[G_{\linf/F}]$-module.
\end{labelledconj}

\subsection{}
\label{section from Kato's work}
Fix a number field $F$ and let $T$ denote a finitely generated $\Zp$-module, endowed with a continuous action of $G_S(F)$, where $S$ contains the primes above $p$, the archimedean primes, and the primes $v$ such that the inertia group of $v$ does not act trivially on $T$.
Note that if $T$ is the Tate module $T_p\EC$ of an elliptic curve $\EC/F$, then the inertia group of $v$ acts trivially on $T$ for every prime $v$ of good reduction.
Fix an $S$-admissible extension $\linf/F$.
Define the $i$-th \emph{Iwasawa cohomology group}
as the inverse limit
\begin{equation}\label{eqn: iwasawa cohomology defn}
\Z^i_S\left(T/\linf\right) = \varprojlim_L H^i\left( G_S(L), T\right), \text{ for } i=0, 1, 2, 
\end{equation}
where $L$ ranges over all finite extensions of $F$ contained in $\linf$ and the limit is taken with respect to the corestriction maps.
It is well-known that $\Z^0_S (T/\linf)$ vanishes (see, for example, \cite[Proposition~2.1]{CS05}).
In this article, we consider $T=\Zp(1) = \varprojlim \mu_{p^n}$or $T=T_p(\EC) = \varprojlim_n {\EC_{p^n}}$.
Here $\Zp(1)$ denotes the Tate twist of $\Zp$.
We remark that the dual fine Selmer group $\Zp(1)$ has also been studied under various guises in \cite{Sch79,NQD}.
The weak Leopoldt conjecture is known to be true for the cyclotomic $\Zp$-extension, see \cite[Theorem~10.3.25]{NSW08}.
In other words,
\[
H^2\left(G_S(F_{\cyc}),\QQ_p/\ZZ_p \right) =0.
\]
Hence $H^2\left(G_S(\linf),\QQ_p/\ZZ_p \right)$ vanishes (see~\cite[p.~815~(20)]{CS05}).
An argument identical to \cite[Lemma~3.1]{CS05} but for the module $\QQ_p/\ZZ_p$, shows that this vanishing is equivalent to the fact that $\Z^2_S\left(\Zp(1)/\linf\right)$ is $\lg[G_{\linf/F}]$-torsion.
Analogously, \cite[Lemma~3.1]{CS05} shows that $H^2\left(G_S(\linf), \EC_{p^\infty}\right)=0$ if and only if $\Z^2_S\left(T_p(\EC)/\linf\right)$ is $\lg[G_{\linf/F}]$-torsion, but the equivalent of the weak Leopoldt conjecture is not known in the case of elliptic curves.
When $G_S(\linf)$ acts trivially on $\EC_{p^\infty}$, then $H^2\left(G_S(\linf), \EC_{p^\infty}\right)=0$ (see for example \cite[Lemma~2.4]{CS05}).

The following notions will be useful in the reformulation of \ref{conj:B} in Section~\ref{reformulation}.
For $i\geq 0$ and $T=\Zp(1)$, choose $S$ to be a finite set of places of $F$ containing the primes above $p$ and the archimedean primes.
For a finite extension $L/F$, let $\mathcal{O}_L[1/S]$ be the subring of $L$ consisting of elements that are integral at every finite place of $L$ not lying over $S$, and let $H^i_{\text{\'et}}$ denote {\'e}tale cohomology.
An equivalent definition of the $i$-th Iwasawa cohomology group is the following (see \cite[\S~2.2 p.~552]{Kat06})
\begin{equation}\label{equivalent definition of Z2}
\Z^i_S\left( \Zp(1)/\linf\right) = \varprojlim_L H^i_{\text{\'et}}\left( \mathcal{O}_L[1/S], \Zp(1)\right)
\end{equation}
where $L$ ranges over all finite extensions of $F$ contained in $\linf$ and the limit is taken with respect to the corestriction maps.
The dual fine Selmer group of $\Zp(1)$ was introduced in \cite{CS05_CM} and is defined as $\Gal(M(\linf)/\linf)$ where $M(\linf)$ is the maximal abelian, pro-$p$ unramified extension of ${\linf}$ such that all primes above $p$ split completely. An equivalent definition has been given in \cite[\S2.4, p.~554]{Kat06}.
In particular,
\begin{equation}\label{eq:iso_selmer_X}
\Y(\Zp(1)/\linf) = \varprojlim_L \Pic\left(\mathcal{O}_L[1/S]\right)_{p^\infty}.
\end{equation}
Moreover, there is an exact sequence (see for example, \cite[p.~330 (2.6)]{CS05_CM})
\[
0\rightarrow \Y\left( \Zp(1)/\linf\right) \rightarrow \Z^2_S\left( \Zp(1)/\linf\right) \rightarrow \bigoplus_{v\in S(\linf)}\Zp \rightarrow \Zp\rightarrow 0
\]
and an isomorphism (see~\cite[\S I.6.1]{Rub00})
\begin{equation}\label{eqn:hydfinesel(1.2)}
R\left(\QQ_p/\Zp/{\linf}\right) \simeq \Hom\left(\Gal\left(M\left({\linf}\right)/{\linf}\right), \QQ_p/\Zp\right).
\end{equation}
Since the dual fine Selmer group is independent of the choice of $S$, it is not included in the notation.

%------------------------------------------
\section{Fine Selmer Groups in the Cyclotomic Extension} \label{section:Finite Fine Selmer Groups in the Cyclotomic Extension}
%------------------------------------------
The results in this section provide evidence for~\ref{conj:A}.
First, we prove that for a set of ordinary primes of positive density, the Selmer group is trivial over the cyclotomic $\Zp$-extension for rank $0$ elliptic curves.
Next, we provide evidence for \ref{conj:A} for a class of elliptic curves defined over $p$-rational number fields.

\subsection{Trivial Fine Selmer Groups in the Cyclotomic Tower}
\label{section: Trivial Fine Selmer Groups in the Cyclotomic Tower}
 
Throughout this section, assume that $\EC/\QQ$ is a rational elliptic curve.
Fix a number field $F$ and consider the base-change $\EC/F$ of the curve to $F$.
Given a prime number $p$, by slight abuse of notation, we denote by $F_{\cyc}/F$ the cyclotomic $\Zp$-extension and by $\Gamma=\Gal\left(F_{\cyc}/F\right)\simeq \Zp$ its Galois group, without mention of the prime $p$, as it can be inferred by the context.

At a prime $v$ in $F$, the reduction of $\EC$ modulo $v$ is denoted $\widetilde{\EC_v}$; it is a curve over the residue field $\kappa_v$.
Following \cite[Section~1(b)]{Maz72}, a prime $v\mid p$ is called \emph{anomalous} if $p$ divides $\abs{ \widetilde{\EC}_v(\kappa_v)}$.

In the remaining part of this section, we extend results of Greenberg~\cite[Proposition~5.1]{Gre99} and C.~Wuthrich~\cite[Section~9]{Wut07_fineSelmer} to base fields other than $\QQ$.
In Theorem~\ref{Theorem: Construction} we provide evidence for \ref{conj:A} for elliptic curves over a general number field.
We stress that the prime $p$ is not fixed in the remainder of this section and will vary over primes of good reduction.

In the statement of the next theorem we denote by $F^c$ the Galois closure of $F/\QQ$.
\begin{Th}
\label{Theorem: Construction}
Let $\EC/F$ be the base-change of a rational elliptic curve $\EC/\QQ$.
Suppose that it has rank $0$ over $F$ and that the Shafarevich--Tate group of $\EC/F$ is finite.
When $\EC$ has CM by an order in an imaginary quadratic field $K$, assume further that $F^c$ contains $K$.
Then the Selmer group $\Sel\left(\EC/F_{\cyc}\right)$ is trivial for a set of prime numbers of density at least $\frac{1}{[F^c:\QQ]}$.
In particular, \ref{conj:A} holds for $\EC/F$ at all such primes.
\end{Th}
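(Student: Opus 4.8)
The plan is to reduce the statement to a control-theorem computation over $F_{\cyc}/F$, and then to carry out a Chebotarev density count. The first step is to observe that the arithmetic hypotheses make the Selmer group at the base vanish for almost all $p$: in the descent exact sequence $0\to\EC(F)\otimes\QQ_p/\Zp\to\Sel(\EC/F)\to\Sha(\EC/F)[p^\infty]\to 0$, the rank-$0$ hypothesis makes $\EC(F)\otimes\QQ_p/\Zp=0$ for every $p$ (as $\EC(F)$ is finite), and the finiteness of $\Sha(\EC/F)$ makes $\Sha(\EC/F)[p^\infty]=0$ for all but finitely many $p$; hence $\Sel(\EC/F)=0$ outside a finite set of primes.

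The second step is the control theorem along $F_{\cyc}/F$, which is the extension of \cite[Proposition~5.1]{Gre99} and \cite[Section~9]{Wut07_fineSelmer} from $\QQ$ to the base field $F$. For a prime $p$ of good \emph{ordinary} reduction at every place $v\mid p$ of $F$, the cokernel of the restriction map $\Sel(\EC/F)\to\Sel(\EC/F_{\cyc})^{\Gamma}$ embeds into a direct sum of local terms $\mathcal{H}_v$, $v\in S$, where $\mathcal{H}_v$ vanishes at a prime $v\nmid p$ of good reduction, at a prime $v\nmid p$ of bad reduction with $p\nmid c_v$, and at $v\mid p$ when $\EC$ is non-anomalous at $v$. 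Since $\Sel(\EC/F)=0$ by the first step, the restriction map is zero, so $\Sel(\EC/F_{\cyc})^{\Gamma}$ itself embeds into $\bigoplus_{v}\mathcal{H}_v$ and thus vanishes as soon as $p$ is, in addition, non-anomalous at every $v\mid p$, coprime to every $c_v$, and coprime to $\#\Sha(\EC/F)$. Since $\Sel(\EC/F_{\cyc})^{\vee}$ is a finitely generated $\Zp\llbracket\Gamma\rrbracket$-module, topological Nakayama upgrades $\Sel(\EC/F_{\cyc})^{\Gamma}=0$ to $\Sel(\EC/F_{\cyc})=0$. Then $R(\EC/F_{\cyc})\hookrightarrow\Sel(\EC/F_{\cyc})=0$ by~\eqref{eq 58 cs05}, so $\Y(\EC/F_{\cyc})=R(\EC/F_{\cyc})^{\vee}=0$; in particular it is finitely generated over $\Zp$, i.e.\ \ref{conj:A} holds at every such $p$.

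The third step is the density count. Let $\Sigma$ be the set of rational primes $p\geq 7$ that split completely in $F^{c}$; since $F^{c}/\QQ$ is Galois, Chebotarev gives $\Sigma$ density $1/[F^{c}:\QQ]$, and a prime splits completely in $F$ precisely when it lies in $\Sigma$. For $p\in\Sigma$ of good reduction, every place $v\mid p$ of $F$ has residue field $\mathbb{F}_p$, so the reduction of $\EC/F$ at $v$ is the reduction of $\EC/\QQ$ at $p$; hence $\EC/F$ is ordinary (resp.\ non-anomalous) at all $v\mid p$ exactly when $\EC/\QQ$ is ordinary (resp.\ satisfies $a_p(\EC)\neq 1$) at $p$. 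When $\EC$ has CM, the hypothesis $K\subseteq F^{c}$ forces $p\in\Sigma$ to split in $K$, hence $\EC$ is ordinary at $p$; when $\EC$ has no CM, the supersingular primes have density $0$ by a classical theorem of Serre. The anomalous primes in $\Sigma$ are those with $a_p(\EC)=1$ (the unique residue compatible with the Hasse bound for $p\geq 7$), again a set of density $0$. Deleting from $\Sigma$ these two density-$0$ subsets, together with the finitely many primes dividing the conductor of $\EC$, some $c_v$, or $\#\Sha(\EC/F)$, leaves a set of density $1/[F^{c}:\QQ]$ on which the second step applies; this proves the theorem, the sharper statement recorded in Corollary~\ref{cor: conj A for CM} being obtained by tracking more precisely which primes are admissible. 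I expect the main obstacle to be the careful extension of the control-theorem argument to $F$ — verifying that each local term $\mathcal{H}_v$ vanishes under the stated hypotheses, now with several primes above $p$ to handle at once, before invoking Nakayama — with the verification that the supersingular and anomalous primes genuinely form density-$0$ sets (via Serre's theorem and the Hasse bound) the other point that requires care.
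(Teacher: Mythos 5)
Your argument is correct, and it reaches the conclusion by a genuinely different mechanism than the paper's written proof, even though both end with the same Chebotarev count. For the Iwasawa-theoretic step the paper does not run the control theorem directly: it first notes that $\Sel(\EC/F_{\cyc})$ is $\Zp\llbracket\Gamma\rrbracket$-cotorsion for good ordinary $p$, then invokes Greenberg's special-value formula \cite[Theorem~4.1]{Gre99} for $f_\EC(0)$ to see that under conditions (i)--(v) the characteristic series is a unit, hence $\X(\EC/F_{\cyc})$ is finite, and finally uses \cite[Proposition~4.14]{Gre99} (no nontrivial finite $\Zp\llbracket\Gamma\rrbracket$-submodules when $\EC(F)_p=0$) to upgrade finiteness to triviality. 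You instead redo the snake-lemma/control argument over $F$ (as in the proof of \cite[Proposition~5.1]{Gre99} over $\QQ$): $\Sel(\EC/F)=0$ for almost all $p$, vanishing of the local kernels under the non-anomalous, $p\nmid c_v$ conditions forces $\Sel(\EC/F_{\cyc})^{\Gamma}=0$, and Nakayama (the ideal $(T)$ lies in the Jacobson radical) gives $\Sel(\EC/F_{\cyc})=0$, whence $\Y(\EC/F_{\cyc})=0$ by \eqref{eq 58 cs05}. Your route is more elementary: it needs neither cotorsionness, nor the characteristic power series, nor the finite-submodule theorem, and it does not use the condition $\EC(F)_p=0$ as a separate hypothesis; what the paper's route buys is the exact valuation of $f_\EC(0)$, which feeds the finer estimates alluded to around Corollary~\ref{cor: conj A for CM}. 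Two small points to tighten: the cokernel of $\Sel(\EC/F)\to\Sel(\EC/F_{\cyc})^{\Gamma}$ is a quotient of (a subgroup of) the direct sum of local kernels, not a submodule of it --- the surjectivity of the middle restriction map, i.e.\ $\cd_p(\Gamma)=1$, is what makes this work --- so ``embeds'' should be ``is a subquotient''; and the density-$0$ statement for $\{p: a_p(\EC)=1\}$ is a genuine theorem (Serre's 1981 Chebotarev applications in the non-CM case, or Murty's refinement via modularity, which is precisely where the paper uses that $\EC$ is defined over $\QQ$), so it deserves an explicit citation rather than a bare assertion, as you yourself flag at the end.
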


\begin{proof} By assumption, the Selmer group over $F$ is finite since both the Mordell--Weil and the Shafarevich--Tate groups are finite.
If we further know that $p$ is a prime of good ordinary reduction for $\EC$, it follows from Mazur's Control Theorem that the cyclotomic $p$-primary Selmer group $\Sel\left(\EC/F_{\cyc}\right)$ is $\lg[\Gamma]$-cotorsion (see \cite[Corollary~4.9]{Gre_PCMS}).
In this setting, let $f_\EC(T)$ be a power series generating the characteristic ideal of $\X\left(\EC/F_{\cyc}\right)$.
Since $\Sel(\EC/F)$ is finite, $f_\EC(0)\neq 0$.
Denote by $c_v$ the local Tamagawa number at a prime $v$ and by $c_v^{(p)}$ the highest power of $p$ dividing it.
Then, \cite[Theorem~4.1]{Gre99} asserts that 
\begin{equation}\label{eqn: greenberg's char polynomial result}
f_\EC(0) \sim \left( \prod_{v \bad} c_v^{(p)}\right) \left( \prod_{v\mid p} \abs{ \widetilde{\EC}_v (\kappa_v)_p}^2\right) \abs{ \Sel(\EC/F)} \bigg/ \abs{ \EC(F)_p}^2
\end{equation}
where $a\sim b$ for $a, b\in \QQ_p^\times$ indicates that $a,b$ have the same $p$-adic valuation.

For a prime number $p$, consider the following five properties:
\begin{enumerate}[label=(\roman*)]
\item $p$ is a prime of good ordinary reduction for $\EC$;\label{pt:density_good_ord}
\item $\EC$ has no non-trivial $p$-torsion points defined over $F$;\label{pt:density_torsion}
\item $\EC/F$ has good ordinary reduction at all primes $v\mid p$ and all these primes are non-anomalous;\label{pt:density_anomalous}
\item the $p$-primary part $\Sha(\EC/F)_{p^\infty}$ of the Shafarevich--Tate group is trivial;\label{pt:density_sha}
\item $p$ does not divide the local Tamagawa number, i.e, $c_v^{(p)}=1$ for every prime $v$ of bad reduction.\label{pt:density_tamagawa}
\end{enumerate}
Since $\EC/F$ is assumed to have rank $0$, the condition $\EC(F)_p=0$ implies that $\Sel(\EC/F) = \Sha(\EC/F)_{p^\infty}$.
It follows from \eqref{eqn: greenberg's char polynomial result} that for a prime number satisfying \ref{pt:density_good_ord}--\ref{pt:density_tamagawa} above, $f_\EC(0)$ is a unit.

When $f_\EC(0)$ is a unit, elementary properties of characteristic power series show that $\X\left(\EC/F_{\cyc}\right)$ (and hence $\Y\left(\EC/F_{\cyc}\right)$) is finite, (see notation introduced in \eqref{eqn: dual selmer and fine selmer}).
Equivalently, both $\Sel\left(\EC/F_{\cyc}\right)$ and $R\left(\EC/F_{\cyc}\right)$ are finite.
When $\EC(F)_p=0$, \cite[Proposition~4.14]{Gre99} implies that $\X\left(\EC/F_{\cyc}\right)$ has no non-trivial finite $\lg[\Gamma]$-submodules.
In other words, $\X\left(\EC/F_{\cyc}\right)$ is trivial, whenever it is finite.
Thus, $\Y\left(\EC/F_{\cyc}\right)$ is also trivial.
Hence, \ref{conj:A} holds for $\EC/F$ when $\EC/F$ is an elliptic curve satisfying~\ref{pt:density_good_ord}--\ref{pt:density_tamagawa}.

To complete the proof, we show that for $\EC/F$ satisfying the assumptions of the theorem, properties~\ref{pt:density_good_ord}--\ref{pt:density_tamagawa} hold for a set of prime numbers of density at least $\frac{1}{[F^c:\QQ]}$.

When $\EC/\QQ$ is an elliptic curve without CM, we know by \cite[Th{\'e}or{\`e}me 20]{Ser81} that all primes in $\QQ$ outside a set of density~$0$ have good ordinary reduction.
When $\EC/F$ is an elliptic curve with CM by an order in $K$, Deuring's Criterion (see, for instance, \cite[Chapter~13,~\S4,~Theorem~12]{Lang_Elliptic_Functions}) asserts that the primes of ordinary reduction are those lying above rational primes that split in $K/\QQ$ and the density of such prime numbers equals $1/2$ by the Chebotarev density theorem.
Next, it follows from the celebrated result \cite[Th\'eor\`eme]{Mer96} of L.~Merel that for all but finitely many prime numbers, we have $\EC(F)_p =0$.
Assuming the finiteness of the Shafarevich--Tate group, condition \ref{pt:density_sha} holds for all but finitely many prime numbers, and the same is true for~\ref{pt:density_tamagawa} since the local Tamagawa number $c_v$ is equal to $1$ at the primes of good reduction.

The analysis of~\ref{pt:density_anomalous} requires more care.
By definition, a prime $v\mid p$ is anomalous when $a_v = 1 + \abs{\kappa_v} - \abs{ \widetilde{\EC}_v(\kappa_v)}$ is congruent to $1 \pmod{p}$.
Observe that by the Hasse bound, $\abs{a_v}\leq 2 \sqrt{\abs{\kappa_v}}$.
Therefore, if $v\mid p$ is a prime in $F$ that splits completely, so that $\kappa_v = \mathbb{F}_p$, then $a_v \equiv 1 \pmod{p}$ implies that $a_v=1$ for $p>5$.
By the Chebotarev density theorem, the density of rational primes that split completely in $F^c$ is $\frac{1}{[F^c:\QQ]}$.
Therefore, at least $\frac{1}{[F^c:\QQ]}$ of the primes in $\QQ$ split in $F$, as well.
By the previous discussion, the density of rational primes which split completely in $F$ and whose divisors are primes of good ordinary reduction for $\EC/F$ is at least $\frac{1}{[F^c:\QQ]}$.
Finally, since $\EC$ is defined over $\QQ$, the Modularity Theorem guarantees that $\EC$ is associated with an eigencuspform of weight $2$.
This allows us to appeal to the work of V.~K.~Murty \cite{Mur97}.
We conclude from \cite[pp.~288--289 or Theorem~5.1 and Remark~5.2]{Mur97} that for $\EC/\QQ$, the set of prime numbers with the property that $a_p=1$ has density $0$.
Since for all prime numbers $p$ that split completely and for all $v\mid p$, we have $a_v(\EC/F)=a_p(\EC/\QQ)$, we deduce that the set of prime numbers $p$ such that $a_v=1$ for at least one $v\mid p$ is a set of density $0$.
This completes the proof of the theorem.
\end{proof}

\begin{Remark}\leavevmode
\begin{enumerate}
\item 
It should be clear from the proof that one can insist that at all primes dividing the prime numbers in the set of positive density whose existence is stated in the theorem, the reduction type is good and ordinary.
\item The key difficulty in extending this result to elliptic curves defined over $F$ is that we rely on \cite{Mur97} to show that anomalous primes have density 0.
Since these results are proven for normalized weight $2$ eigencuspforms, we need to invoke the Modularity Theorem.
\end{enumerate}
\end{Remark}

An analogous statement can be proven in the supersingular case as well.
\begin{Th}
\label{Theorem: supersingular}
Let $\EC/\QQ$ be an elliptic curve, and suppose that $\Sel(\EC/F)$ is finite.
Then \ref{conj:A} holds for $\EC/F$ for all but finitely many primes of supersingular reduction.
\end{Th}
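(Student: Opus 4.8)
The plan is to run the proof of Theorem~\ref{Theorem: Construction} again --- now extending \cite[Section~9]{Wut07_fineSelmer} rather than \cite[Proposition~5.1]{Gre99} to base fields other than $\QQ$ --- with the signed (plus/minus) Iwasawa theory of S.~Kobayashi playing the role of Mazur's Control Theorem and of Greenberg's formula~\eqref{eqn: greenberg's char polynomial result}. First I would discard the finitely many supersingular primes $p$ that are $<5$, ramified in $F/\QQ$, or divide $\abs{\Sha(\EC/F)_{p^\infty}}\cdot\bigl(\prod_{v\,\bad}c_v\bigr)\cdot\abs{\EC(F)_{\mathrm{tors}}}$. For the remaining supersingular primes one has $a_p=0$ by the Hasse bound, every $v\mid p$ is unramified over $\QQ_p$, and, crucially, every such $v$ is non-anomalous: since $a_v\equiv 0\pmod p$ one gets $\abs{\widetilde{\EC}_v(\kappa_v)}=\abs{\kappa_v}+1-a_v\equiv 1\pmod p$. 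This is exactly where the supersingular case is easier than the general situation of Theorem~\ref{Theorem: Construction}: the delicate density input needed there to control anomalous primes becomes unnecessary.

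For such $p$ I would introduce Kobayashi's plus and minus Selmer groups $\Sel^{+}(\EC/F_{\cyc})$ and $\Sel^{-}(\EC/F_{\cyc})$, in the formulation valid over the unramified completions $F_v$, $v\mid p$, developed by Iovita and Pollack. Two inputs from signed Iwasawa theory are needed, both ultimately resting on Kato's Euler system: that $\Sel^{\pm}(\EC/F_{\cyc})^{\vee}$ is a torsion $\lg[\Gamma]$-module, with characteristic power series $f^{\pm}_{\EC}(T)$; and the signed control theorem, relating $\Sel^{\pm}(\EC/F_{\cyc})^{\Gamma}$ to $\Sel(\EC/F)$ with finite kernel and cokernel, so that the finiteness of $\Sel(\EC/F)$ gives $f^{\pm}_{\EC}(0)\neq 0$. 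The signed analogue of~\eqref{eqn: greenberg's char polynomial result} --- due to Kobayashi and sharpened by Kurihara and Pollack --- then reads
\[
f^{\pm}_{\EC}(0)\ \sim\ \Bigl(\textstyle\prod_{v\,\bad}c_v^{(p)}\Bigr)\cdot\abs{\Sel(\EC/F)}\bigg/\abs{\EC(F)_p}^{2},
\]
the local contribution at $p$ being a $p$-adic unit exactly because the primes above $p$ are non-anomalous. Under the congruence conditions imposed above the right-hand side is a $p$-adic unit, so $f^{\pm}_{\EC}(T)$ is a unit in $\lg[\Gamma]\simeq\Zp\llbracket T\rrbracket$ and $\Sel^{\pm}(\EC/F_{\cyc})^{\vee}$ is finite.

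Finally, the fine local condition at every place of $F_{\cyc}$ is the zero subgroup, which is contained in the $\pm$-norm subgroup at the primes above $p$ and in the full local cohomology at the remaining places; hence $R(\EC/F_{\cyc})\subseteq\Sel^{\pm}(\EC/F_{\cyc})$, so $\Y(\EC/F_{\cyc})$ is a quotient of the finite group $\Sel^{\pm}(\EC/F_{\cyc})^{\vee}$, hence itself finite and in particular finitely generated over $\Zp$. Thus \ref{conj:A} holds for $\EC/F$ at every supersingular prime outside the finite exceptional set. The principal obstacle is not conceptual but a matter of having the two inputs of the second step genuinely at hand in this generality: the $\lg[\Gamma]$-torsionness of $\Sel^{\pm}(\EC/F_{\cyc})^{\vee}$, the signed control theorem, and the signed analogue of~\eqref{eqn: greenberg's char polynomial result} must be available over the completions $F_v$ --- which for all but finitely many $p$ are unramified over $\QQ_p$ but may have residue degree $>1$, so that the Iovita--Pollack framework and some care with the jumping of the norm maps on the formal group are needed --- and over $F_{\cyc}$ rather than $\QQ_{\cyc}$, even though $\EC$ is only defined over $\QQ$, which requires descending Kato's divisibility from $\QQ$ to $F$. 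An alternative worth attempting, which would sidestep the reduction type altogether, is to argue directly with the fine Selmer group and its own control theorem; but then one must control the local terms $H^{1}\bigl((F_{\cyc})_w,\EC_{p^\infty}\bigr)$ at the primes $w\nmid p$ whose residue field grows in the cyclotomic tower, which does not appear any simpler.
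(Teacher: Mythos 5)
Your proposal is correct and follows essentially the same route as the paper: pass to the signed $\pm$-Selmer groups over $F_{\cyc}$ (for $p>3$ supersingular and unramified in $F$), use their $\lg[\Gamma]$-cotorsionness and the formula for $f^{\pm}_{\EC}(0)$ to see it is a unit once the finitely many primes dividing $\abs{\Sel(\EC/F)}$, the Tamagawa numbers, or ramifying in $F$ are discarded, and then conclude via the inclusion $R(\EC/F_{\cyc})\subseteq\Sel^{\pm}(\EC/F_{\cyc})$. The inputs you flag as the "principal obstacle" (cotorsionness, the control theorem, and the signed evaluation at $T=0$ over unramified completions of $F$) are exactly what the paper takes from B.~D.~Kim's work \cite{BDKim13}, so no new argument is needed there.
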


\begin{proof}
For an elliptic curve $\EC/F$ it is known that the Selmer group is not $\lg[\Gamma]$-cotorsion at a prime $p$ of supersingular reduction, see \cite[p.~19]{CS00_book}.
However, there is a notion of $\pm$-Selmer groups\footnote{We avoid giving the precise definition of these Selmer groups because their definition is intricate and also not relevant for the remainder of this paper.
For a precise definition, we refer the reader to \cite{Kob03} or \cite{BDKim13}.} when $p>3$, denoted by $\Sel^{\pm}\left(\EC/F_{\cyc}\right)$.
In the setting of the theorem, and under the additional hypothesis that $p>3$ is an unramified prime in $F$, it is known that $\Sel^{\pm}\left(\EC/F_{\cyc}\right)$ are $\lg[\Gamma]$-cotorsion, see \cite[first line of the proof of Corollary~3.15]{BDKim13}.
Therefore, in this case, we can define a pair of signed characteristic power series $f_\EC^{\pm}(T)$ for the Pontryagin duals $\X(\EC/F_{\cyc})^\pm$ of $\Sel^{\pm}\left(\EC/F_{\cyc}\right)$.
It follows from the definitions that the fine Selmer group is a subgroup of the signed Selmer groups.
To prove the theorem it thus suffices to show that either of the signed Selmer groups is finite for all but finitely many primes of good supersingular reduction as this will ensure that the fine Selmer group is also finite and its corresponding $\mu$ and $\lambda$ invariants vanish.

When $\Sel\left(\EC/F\right)$ is finite and $p>3$ is an unramified prime in $F$, we know from \cite[Theorem~1.2]{BDKim13} that 
\begin{equation}\label{eqn: ss char}
f_\EC^{\pm}(0) \sim \abs{\Sel(\EC/F)}\prod_{v \bad}c_v^{(p)}.
\end{equation}
If $f_\EC^{\pm}(0)\sim 1$, then it follows from the Structure Theorem that $\Sel^{\pm}\left(\EC/F_{\cyc}\right)$ are finite.
To complete the proof we show that $f_\EC^{\pm}(0)\sim 1$ for all but finitely many primes of good supersingular reduction.
\begin{enumerate}[label = (\roman*)]
\item Since $F$ is fixed, there are only finitely many primes which can ramify in $F$.
In other words, \eqref{eqn: ss char} holds for all but finitely many primes.
\item By assumption, $\Sel(\EC/F)$ is finite.
There are only finitely many primes which can divide its order.
\item The local Tamagawa number $c_v$ is equal to $1$ at the primes of good reduction.
Therefore, there are only finitely many primes which can divide ${\prod_{v \bad} c_v}$.
\end{enumerate}
Therefore, as $p$ varies over all supersingular primes of $\EC$, both signed Selmer groups $\Sel^{\pm}\left( \EC/F_{\cyc}\right)$ are finite for all but finitely many such primes.
Hence, $R\left(\EC/F_{\cyc}\right)$ is also finite for such $p$.
\end{proof}

\begin{Remark}
In fact, more is true.
\cite[Theorem~1.1 (or Theorem~3.14)]{BDKim13} applies in the setting of Theorem~\ref{Theorem: supersingular} and ensures that the $\X{^-}(\EC/F_{\cyc})$ does not contain any non-trivial finite index submodules.
Therefore, if $\Sel^{-}\left(\EC/F_{\cyc}\right)$ is finite, it must be trivial.
Since $R\left(\EC/F_{\cyc}\right)$ is a subgroup of $\Sel^{-}\left(\EC/F_{\cyc}\right)$, it must be trivial as well.
For the assertion that $\X(\EC/F_{\cyc})^+$ has no non-trivial finite index submodules, the additional hypothesis that $p$ is completely split in $F$ is required.
\end{Remark}

Combining Theorems \ref{Theorem: Construction} and \ref{Theorem: supersingular}, the next result is immediate.
\begin{cor}
\label{cor: conj A for CM}
Let $\EC$ be CM rational elliptic curve and let $\EC/F$ be its base-change to $F$.
Suppose that $\EC/F$ has rank $0$, that the Shafarevich--Tate group of $\EC/F$ is finite, and that the Galois closure $F^c$ of $F$ contains $K$.
Then \ref{conj:A} holds for $\EC/F$ for a set of prime numbers of density $\frac{1}{2}+\frac{1}{[F^c:\QQ]}$.
\end{cor}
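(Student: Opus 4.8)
The plan is to combine Theorem~\ref{Theorem: Construction} and Theorem~\ref{Theorem: supersingular}, exploiting the fact that the primes they treat are of complementary reduction types for a CM curve, hence the two sets of primes are disjoint. First I would record that the hypotheses of the corollary---that $\EC/F$ has rank $0$ and that $\Sha(\EC/F)$ is finite---force $\Sel(\EC/F)$ to be finite. This is precisely the standing assumption of both theorems, so each applies verbatim to $\EC/F$, and the assumption that $F^c$ contains $K$ is exactly the extra hypothesis needed in the CM case of Theorem~\ref{Theorem: Construction}.

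Next I would separate the rational primes by reduction type using Deuring's criterion, as already done inside the proof of Theorem~\ref{Theorem: Construction}: since $\EC$ has CM by an order in $K$, a rational prime of good reduction is ordinary exactly when it splits in $K/\QQ$, and supersingular exactly when it is inert or ramified in $K/\QQ$. By the Chebotarev density theorem each of these two classes has density $\tfrac12$ (the ramified primes being finite in number), and the two classes are disjoint.

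Then I would invoke the two theorems. Theorem~\ref{Theorem: Construction} produces a set $\Sigma_{\mathrm{ord}}$ of primes of density at least $\tfrac{1}{[F^c:\QQ]}$ at which \ref{conj:A} holds; inspecting its proof, $\Sigma_{\mathrm{ord}}$ may be taken to consist of the primes $p>5$ that split completely in $F^c$, after removing a density-$0$ set of anomalous primes and finitely many others, so its density is exactly $\tfrac{1}{[F^c:\QQ]}$. Since $K\subseteq F^c$, every prime splitting completely in $F^c$ splits in $K$, hence lies in the ordinary class. On the other hand, Theorem~\ref{Theorem: supersingular} gives a set $\Sigma_{\mathrm{ss}}$ of density $\tfrac12$, namely all but finitely many supersingular primes, at which \ref{conj:A} also holds. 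As $\Sigma_{\mathrm{ord}}$ consists of ordinary primes and $\Sigma_{\mathrm{ss}}$ of supersingular primes, the union $\Sigma_{\mathrm{ord}}\cup\Sigma_{\mathrm{ss}}$ is a disjoint union, so it has density $\tfrac12+\tfrac{1}{[F^c:\QQ]}$, and \ref{conj:A} holds for $\EC/F$ at every prime in it.

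There is essentially no hard step here; the argument is disjoint-union bookkeeping on densities. The only point needing a little care is checking that the positive-density set furnished by Theorem~\ref{Theorem: Construction} genuinely consists of ordinary primes, so that it cannot overlap the supersingular set handled by Theorem~\ref{Theorem: supersingular}; this is where the CM hypothesis and the containment $K\subseteq F^c$ are both used, via Deuring's criterion, and it is immediate once one unwinds the construction of that set in the proof of Theorem~\ref{Theorem: Construction}.
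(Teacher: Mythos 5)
Your proposal is correct and follows essentially the same route as the paper: the corollary is deduced by combining Theorem~\ref{Theorem: Construction} (ordinary contribution of density at least $\tfrac{1}{[F^c:\QQ]}$, coming from primes split completely in $F^c$, hence split in $K$) with Theorem~\ref{Theorem: supersingular} (density $\tfrac12$ from the supersingular primes, via Deuring's criterion), the two sets being disjoint by reduction type. Your write-up merely spells out the disjointness bookkeeping that the paper leaves implicit.
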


\begin{proof}
By Deuring's Criterion we know that $1/2$ of the primes are supersingular and Theorem~\ref{Theorem: supersingular} asserts that there is a contribution of density $1/2$.
But, there is also a contribution from the primes of good ordinary reduction by Theorem~\ref{Theorem: Construction}.
The corollary follows.\end{proof}

Let us now turn to a special class of number fields, called $p$-rational number fields.

\subsection{\texorpdfstring{\ref{conj:A}}{} over \texorpdfstring{$p$}{}-Rational Number Fields}
\label{section: Conjecture A for p-rational}
For the number field $F$ and a fixed prime $p$, choose $S$ to be a finite set of primes of $F$ containing the primes above $p$ and the archimedean primes.
The weak Leopoldt conjecture for $\linf/F$ is the following assertion (see for example \cite[Theorem~10.3.22]{NSW08})
\begin{equation}\label{weak leopoldt classical}
H^2\left(\Gal\left(F_S/\linf\right), \QQ_p/\Zp\right)=0.
\end{equation}
It is known to hold for the cyclotomic $\Zp$-extension $F_{\cyc}/F$ (see \cite[Theorem~10.3.25]{NSW08}).
If \eqref{weak leopoldt classical} holds for a finite set $S$ as above, it also holds for the set $\Sigma= S_p \cup S_\infty$ (see \cite[Theorem~11.3.2]{NSW08}).
Therefore, the weak Leopoldt Conjecture is independent of the choice of $S$, when $S$ contains $\Sigma$.
Henceforth, fix $S=\Sigma$.
An equivalent formulation of the \ref{conj:Iwasawa} for $F$ is the assertion that $\mathcal{G}_\Sigma\left(F_{\cyc}\right)=\Gal\left(F_\Sigma(p)/F_{\cyc}\right)$ is a free pro-$p$ group (see \cite[Theorem~11.3.7]{NSW08}).
Moreover, a pro-$p$ group $G$ is free if and only if its $p$-cohomological dimension $\cd_p(G)$ is less or equal to $1$ (see \cite[Corollary~3.5.17]{NSW08}).
Combining these results with \cite[Chapter~I, Section~4, Proposition~21]{Ser_GalCoho}, one obtains the following equivalent formulation: 
\begin{equation}\label{eqn: NSW 11.3.7}
\text{the~\ref{conj:Iwasawa} for } F \text{ is true } \Longleftrightarrow H^2\left(\mathcal{G}_\Sigma\left(F_{\cyc}\right), \ZZ/p\ZZ\right)=0.
\end{equation}

To state the results in this section, we recall the notion of a special class of number fields, called \emph{$p$-rational}, which were introduced in~\cite{MN90}.
We refer the reader to \cite[Theorem~IV.3.5 and Definition~IV.3.4.4]{Gra13} for a detailed discussion.
\begin{Defi}
Denote by $F_{S_p}$ the maximal extension of $F$ unramified outside $S_p$ and let $F_{S_p}(p)/F$ be its maximal pro-$p$ sub-extension.
Set $\mathcal{G}_{S_p}(F) = \Gal\left(F_{S_p}(p)/F\right)$.
If $\mathcal{G}_{S_p}(F)$ is free pro-$p$, then $F$ is called $p$-rational.
\end{Defi}

Some examples of $p$-rational fields include:
\begin{enumerate}[label = (\roman*)]
\item the field $\QQ$ of rational numbers;
\item imaginary quadratic fields such that $p$ does not divide the class number (see \cite[Proposition~4.1.1]{Gre16});
\item cyclotomic fields $\QQ(\mu_{p^n})$, where $p$ is a regular prime and $n\geq 1$ (combine \cite[Example~II.7.8.1.1]{Gra13} with \cite[Proposition~13.22]{Was97});
\item more generally, number fields $F$ containing $\mu_p$ with the property that $\# S_p(F)=1$ and such that $p$ does not divide the class number of $F$ (see \cite[Theorem~3.5-(iii)]{Gra13}).
\end{enumerate}
$p$-rational number fields have been studied by Greenberg in \cite{Gre16}, where he explains heuristic reasons to believe that a number field $F$ should be $p$-rational for all primes outside a set of density~$0$ (see \cite[{\S}7.4.4]{Gre16}).
In \cite[Table~4.1]{BR20}, R.~Barbulescu and J.~Ray provide examples of non-abelian $p$-rational number fields.

The following result is easily deduced from the aforementioned results in Galois cohomology.
A proof is included for the sake of completeness.
\begin{Th}
Let $F$ be a $p$-rational number field.
Then the following assertions hold.
\begin{enumerate}[label = \textup{(\roman*\textup)}]
\item \label{thm: classical Conj for p rational}
The~\ref{conj:Iwasawa} holds for $F$.\label{pt:thm_rational:Iwa}
\item Suppose that $F$ contains $\mu_p$ and that $\EC/F$ is an elliptic curve such that $\EC(F)_p\neq 0$.
Then \ref{conj:A} holds for $\EC/F$.\label{pt:thm_rational:conjA}
\end{enumerate}
\end{Th}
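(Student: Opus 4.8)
The plan is to treat the two parts in turn, deriving \ref{pt:thm_rational:conjA} from \ref{pt:thm_rational:Iwa} by way of Theorem~\ref{thm: cs05 thm 3.4}, which identifies \ref{conj:A} with the~\ref{conj:Iwasawa} whenever $\Gal(F_\infty/F)$ is pro-$p$. Both parts amount to assembling facts recalled in the preliminaries, so I do not expect a genuine obstacle; the only points needing a little attention are the identification $F_{S_p}(p)=F_\Sigma(p)$ (valid because $p$ is odd) and the verification, under the hypotheses of \ref{pt:thm_rational:conjA}, that $\Gal(F(\EC_p)/F)$ is a $p$-group.

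For \ref{pt:thm_rational:Iwa} I would use the reformulation recorded above the theorem: the~\ref{conj:Iwasawa} for $F$ is equivalent to $\cd_p(\mathcal{G}_\Sigma(F_{\cyc}))\leq 1$, i.e.\ to $\mathcal{G}_\Sigma(F_{\cyc})=\Gal(F_\Sigma(p)/F_{\cyc})$ being free pro-$p$. Since $p$ is odd no archimedean place ramifies in a pro-$p$ extension, so $F_{S_p}(p)=F_\Sigma(p)$ and hence $\mathcal{G}_{S_p}(F)=\mathcal{G}_\Sigma(F)$; by definition of $p$-rationality this group is free pro-$p$, so $\cd_p(\mathcal{G}_\Sigma(F))\leq 1$. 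As $F_{\cyc}/F$ is pro-$p$ and unramified outside $p$ it is contained in $F_\Sigma(p)$, so $\mathcal{G}_\Sigma(F_{\cyc})$ is a closed subgroup of $\mathcal{G}_\Sigma(F)$; cohomological dimension does not increase along closed subgroups, whence $\cd_p(\mathcal{G}_\Sigma(F_{\cyc}))\leq 1$. Equivalently $H^2(\mathcal{G}_\Sigma(F_{\cyc}),\ZZ/p\ZZ)=0$, which by \eqref{eqn: NSW 11.3.7} is the~\ref{conj:Iwasawa} for $F$.

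For \ref{pt:thm_rational:conjA}, the main step is to check that $\Gal(F_\infty/F)$ is pro-$p$; granting this, Theorem~\ref{thm: cs05 thm 3.4} shows \ref{conj:A} for $\EC/F$ is equivalent to the~\ref{conj:Iwasawa} for $F$, which holds by \ref{pt:thm_rational:Iwa} since $F$ is $p$-rational. To prove $\Gal(F(\EC_p)/F)$ is a $p$-group I would argue that $\EC(F)_p\neq 0$ supplies a $G_F$-stable line $L\subseteq\EC_p$ with trivial action; that the Weil pairing identifies $\EC_p/L\cong\bigwedge^2\EC_p\cong\mu_p$ as $G_F$-modules; and that $\mu_p\subseteq F$ forces the $G_F$-action on $\EC_p/L$ to be trivial as well. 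Thus, in a basis adapted to $0\subset L\subset\EC_p$, the image of $\Gal(F(\EC_p)/F)$ in $\GL_2(\mathbb{F}_p)$ lies in the group of upper unitriangular matrices, which has order $p$. Since $F_\infty/F(\EC_p)$ is pro-$p$ (recalled in \S\ref{fine selmer definition}), $\Gal(F_\infty/F)$ is an extension of a $p$-group by a pro-$p$ group, hence pro-$p$, and the proof concludes as indicated.
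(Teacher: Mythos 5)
Your proposal is correct and follows essentially the same route as the paper: part \ref{pt:thm_rational:Iwa} via the identification $F_{S_p}(p)=F_\Sigma(p)$ for odd $p$, the bound $\cd_p(\mathcal{G}_\Sigma(F_{\cyc}))\leq\cd_p(\mathcal{G}_\Sigma(F))\leq 1$ for the closed subgroup, and \eqref{eqn: NSW 11.3.7}; part \ref{pt:thm_rational:conjA} via the Weil pairing forcing $[F(\EC_p):F]\in\{1,p\}$, hence $F_\infty/F$ pro-$p$, and Theorem~\ref{thm: cs05 thm 3.4}. Your unitriangular-matrix argument simply spells out the Weil-pairing step that the paper states in one line.
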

\begin{proof}\leavevmode
\begin{itemize}
\item[\ref{pt:thm_rational:Iwa}] Since $p\neq 2$, we can replace $S_p$ by $\Sigma$ in the definition of $p$-rational fields.
This is because the archimedean primes are unramified in $F_{S_p}(p)/F$ when $p$ is odd.
By definition, if $F$ is $p$-rational, $\mathcal{G}_\Sigma(F)=\Gal\left(F_{\Sigma}(p)/F\right)$ has $p$-cohomological dimension at most 1.
Hence
\[
H^2\left(\mathcal{G}_\Sigma(F), \ZZ/p\ZZ\right)=0.
\] 
Since $\mathcal{G}_\Sigma(F_{\cyc})=\Gal(F_{\Sigma}(p)/F_{\cyc})$ is a closed normal subgroup of $\mathcal{G}_\Sigma(F)$, it follows from \cite[Proposition~3.3.5]{NSW08} that
\[
\cd_p\left(\mathcal{G}_\Sigma(F_{\cyc})\right)\leq \cd_p\left(\mathcal{G}_\Sigma(F)\right) \leq 1.
\]
Thus $H^2(\mathcal{G}_\Sigma(F_{\cyc}), \ZZ/p\ZZ)=0$, and the result follows from \eqref{eqn: NSW 11.3.7}.
\item[\ref{pt:thm_rational:conjA}] Since $F\supseteq \mu_p$ and $\EC(F)_p\neq 0$ by assumption, the Weil pairing ensures that $F(\EC_p)/F$ is either trivial or of degree $p$.
Thus, $F(\EC_{p^\infty})/F$ is pro-$p$.
The theorem follows from the first point together with Theorem~\ref{thm: cs05 thm 3.4}.\qedhere
\end{itemize}
\end{proof}

%------------------------------------------------------------
\section{\texorpdfstring{\ref{conj:B}}{} for Elliptic Curves with CM: Special Cases}
\label{section: Special Cases}
%------------------------------------------------------------

In this section, we provide evidence for \ref{conj:B}.
First, in Section~\ref{S: from PR} we provide sufficient conditions for \ref{conj:B} to hold when $p$ is a prime of good ordinary reduction, see Theorem~\ref{theorem: main theorem CM}.
In Section~\ref{reformulation} we give a different formulation of \ref{conj:B} for CM elliptic curves and prove cases of the conjecture when $p$ is a prime of good supersingular reduction.
We start with a lemma about good reduction of CM elliptic curves that can be found extracted from \cite[proof of Theorem~5.7-(i)]{Rub_AWS}.

\begin{Lemma}
\label{pot good red}
Let $F$ be a number field and let $\EC/F$ be an elliptic curve with CM by an order inside the ring of integers $\OK$ of an imaginary quadratic field $K$.
Let $p$ be an odd prime number and suppose that the following hypotheses hold:
\begin{enumerate}[label = \textup{(}\roman*\textup{)}]
\item $\EC$ has good reduction at all primes above $p$.
\item The Galois group $G=\Gal(F_\infty/F)$ is isomorphic to $\Zp^2$, where $F_\infty$ denotes $F(\EC_{p^\infty})$.
\end{enumerate}
Then $\EC$ has good reduction everywhere over $F$.
\end{Lemma}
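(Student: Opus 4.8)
The plan is to use the criterion of Néron–Ogg–Shafarevich: $\EC/F$ has good reduction at a prime $v$ if and only if the inertia subgroup $I_v \subseteq G_F = \Gal(\overline{F}/F)$ acts trivially on the Tate module $T_p\EC$ (equivalently on $\EC_{p^\infty}$), where $p$ is any prime different from the residue characteristic of $v$. Since $\EC$ already has good reduction at all primes above $p$ by hypothesis (i), it remains to treat primes $v \nmid p$. For such $v$, the action of $I_v$ on $\EC_{p^\infty}$ factors through $\Gal(F_\infty/F) = G \simeq \Zp^2$ by hypothesis (ii), because $F_\infty = F(\EC_{p^\infty})$ is precisely the fixed field of the kernel of the action on $\EC_{p^\infty}$. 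So it suffices to show that the image of $I_v$ in $G$ is trivial for every $v \nmid p$.

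The key point is that $G \simeq \Zp^2$ is torsion-free, whereas the image of inertia at a prime of potentially good reduction is forced to be finite. First I would recall that a CM elliptic curve over a number field always has potentially good reduction everywhere: this is classical (e.g. the $j$-invariant is an algebraic integer, so $\EC$ cannot have potentially multiplicative reduction at any finite place). Consequently, for each $v \nmid p$ there is a finite extension $F_v'/F_v$ over which $\EC$ acquires good reduction, and the image of $I_v$ acting on $T_p\EC$ is finite — indeed it factors through $\Gal(F_v'/F_v^{\mathrm{ur}} \cap F_v')$ up to the $p$-part, and more precisely the whole inertia image on the Tate module is a finite group of order dividing $24$ (or even $\{2,3,4,6\}$ away from small primes), by the standard analysis of the action of inertia in the potentially good case. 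The essential conclusion I need is simply: the image of $I_v$ in $\mathrm{Aut}(T_p\EC)$ is finite.

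Now combine the two observations. The image of $I_v$ in $G \simeq \Zp^2$ is, on one hand, a quotient of the finite group $I_v$-image in $\mathrm{Aut}(T_p\EC)$, hence finite; on the other hand it is a subgroup of the torsion-free group $\Zp^2$. A finite subgroup of $\Zp^2$ is trivial. Therefore $I_v$ acts trivially on $\EC_{p^\infty}$ for every $v \nmid p$, and Néron–Ogg–Shafarevich gives good reduction at all such $v$. Together with hypothesis (i) this shows $\EC$ has good reduction everywhere over $F$.

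The only subtle point — and the one step I would write out with a citation rather than prove from scratch — is the claim that for a CM elliptic curve the inertia image on $T_p\EC$ at primes $v \nmid p$ is finite; as the excerpt indicates, this is exactly what is extracted from \cite[proof of Theorem~5.7-(i)]{Rub_AWS}, so in the write-up I would attribute the potential-good-reduction-everywhere statement and the finiteness of the inertia image to that reference. Everything else is formal: torsion-freeness of $\Zp^2$ does all the remaining work, so there is essentially no real obstacle once the finiteness input is in hand.
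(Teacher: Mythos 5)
Your proof is correct, and its overall skeleton coincides with the paper's: reduce to primes $v\nmid p$ via hypothesis (i), and then apply N\'eron--Ogg--Shafarevich after showing that inertia at such $v$ acts trivially on $\EC_{p^\infty}$, i.e.\ has trivial image in $\Gal(F_\infty/F)\simeq\Zp^2$. Where you differ is in the justification of that triviality. The paper simply invokes the standard fact that in a $\Zp^d$-extension only primes above $p$ can ramify (which ultimately rests on local class field theory: for $v\nmid p$ the pro-$p$ part of the local unit group is finite, so the inertia image in a torsion-free abelian pro-$p$ group vanishes), and so $F_\infty/F$ is unramified at every $v\nmid p$. You instead use a CM-specific input: the integrality of the $j$-invariant gives potentially good reduction everywhere, hence (by Serre--Tate) a finite inertia image on $T_p\EC$ for $v\nmid p$, and a finite subgroup of $\Zp^2$ is trivial. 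Both routes are valid; the paper's is marginally more general (it never uses CM at this point and works verbatim for any curve whose $p^\infty$-division field is a $\Zp^d$-extension), while yours is self-contained on the arithmetic-geometry side and makes explicit the citation to Rubin that the paper only alludes to. One small remark: the paper's proof also records in passing that $F=F(\EC_p)$ and that $F$ contains the Hilbert class field $K'$ of $K$ --- facts not needed for the good-reduction conclusion but cited later (in Theorem~\ref{thm:appendix}); your write-up omits these, which is fine for the statement as posed.
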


\begin{proof}
It follows from the theory of complex multiplication that $F$ contains the Hilbert class field $K'$ of $K$.
Since the extension $F_\infty/F$ is a $p$-extension and $[F(\EC_p):F]$ is prime-to-$p$, it follows that $F=F(\EC_p)$.
Therefore, $K'(\EC_p)\subseteq F$.

Since all primes above $p$ are of good reduction, we only need to check that at primes away from $p$, the curve $\EC$ has good reduction.
This follows from the criterion of N{\'e}ron--Ogg--Shafarevich, because every such prime is unramified in the $\ZZ_p^2$-extension $F_\infty/F$.
\end{proof}

\subsection{} 
\label{S: from PR}
Fix a number field $F$.
We will work in the following setting.

\begin{equation}\tag*{\textup{\textbf{Ass~1}}}\label{assmpn: for PR case}
\begin{minipage}{0.9\textwidth}
\begin{enumerate}[label=\textup{(}\roman*\textup{)}]
  \item $p\neq 2,3$ is a fixed prime which splits in an imaginary quadratic field $K$;
  \item $\EC$ is an elliptic curve defined over $F$ with CM by $\OK$, and $K$ is contained in $F$;
  \item $\EC$ has good reduction at primes above $p$;
  \item the Galois group $G=\Gal(F_\infty/F)$ is isomorphic to $\Zp^2$, where $F_\infty$ denotes $F(\EC_{p^\infty})$.
\end{enumerate}
\end{minipage}
\end{equation}
In the setting of~\ref{assmpn: for PR case}, write $H=\Gal(F_\infty/F_{\cyc})$, and fix a finite set $S$ containing $S_p \cup S_{\infty}$.

Note that~\ref{assmpn: for PR case} ensures that $\EC$ has good ordinary reduction at $p$, see \cite[Chapter~13 Theorem~12 (Deuring's Criterion)]{Lang_Elliptic_Functions}.
Observe that given any $p$-adic Lie group~$\mathcal{G}$ and a finitely generated $\lg[\mathcal{G}]$-module $M$, the group $M_{\mathcal{G}} := H_0\left(\mathcal{G},M\right)$ is finitely generated as a $\ZZ_p$-module.

\begin{Lemma}
\label{lem: pseudo-isomorphism}
Suppose that \ref{assmpn: for PR case} holds.
Then, the following map of $\lg[H]$-modules is a pseudo-isomorphism, \emph{i.~e.} it has a finite kernel and cokernel,
\[
\label{eqn: psedo-isomorphism}
\Y\left(\EC/F_\infty\right)_H \rightarrow \Y\left(\EC/F_{\cyc}\right).
\]
\end{Lemma}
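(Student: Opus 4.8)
The plan is to compare the two dual fine Selmer groups via the fundamental diagram of restriction maps in Galois cohomology. Concretely, I would apply the Hochschild--Serre spectral sequence to the extension $G_S(F_\infty)\trianglelefteq G_S(F_{\cyc})$ with coefficients in $\EC_{p^\infty}$, together with the corresponding sequences for the local terms $K_v$, exactly as in the standard control-theorem argument. This produces a commutative diagram whose rows are the defining sequences of $R(\EC/F_{\cyc})$ and $R(\EC/F_\infty)^H$ (the latter being $H_0(H,-)$ of the $\Lambda$-dual, i.e. coinvariants after dualizing). Since $H\simeq\Zp$ has cohomological dimension $1$, the spectral sequence collapses and the relevant error terms are $H^1(H,\EC_{p^\infty}^{G_S(F_\infty)})$ globally and $\bigoplus_{v}H^1(H_w, \EC_{p^\infty}^{G_S(F_{\infty,w})})$ locally, plus kernels/cokernels of the maps on $H^0$'s. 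By the snake lemma, it then suffices to show all these terms are finite.

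The key steps, in order: (1) Write down the fundamental diagram relating $R(\EC/F_{\cyc})$, $\ker\big(H^1(G_S(F_\infty),\EC_{p^\infty})^H\to\bigoplus_v K_v(F_\infty)^H\big)$, and the cohomology of $H$; dualize to get the map $\Y(\EC/F_\infty)_H\to\Y(\EC/F_{\cyc})$ whose kernel and cokernel are controlled by the dual error terms. (2) Bound the global error term: since $G=\Gal(F_\infty/F)\simeq\Zp^2$ acts on $\EC_{p^\infty}$, the module $\EC_{p^\infty}^{G_S(F_\infty)}$ is cofinitely generated over $\Zp$, and $H$ being a $p$-adic Lie group of dimension $1$, $H^1(H,\EC_{p^\infty}^{G_S(F_\infty)})$ is cofinitely generated over $\Zp$; one shows it is in fact finite because $H$ acts on this module through a group with no nontrivial $\Zp$-free quotient fixing it — here the CM structure and the splitting of $p$ in $K$ are used, so that the Galois action on $\EC_{p^\infty}$ is via two independent characters and the fixed module is finite, hence the $H^1$ is finite. (3) Bound the local error terms: for $v\nmid p$, $\EC$ has good reduction (this is where Lemma~\ref{pot good red} applies, giving good reduction everywhere), so $v$ is unramified in $F_\infty/F$, the decomposition group is procyclic, and the local $H^1$ terms vanish or are finite by a standard computation; for $v\mid p$, the term $H^1\big(H_w,\EC_{p^\infty}^{G_S(F_{\infty,w})}\big)$ is again finite by the same ordinary/CM local analysis, using that the decomposition group of $v$ in $G$ is open in $\Zp^2$. (4) Assemble via the snake lemma to conclude the map has finite kernel and cokernel.

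The main obstacle I expect is step (2)–(3): controlling the error terms requires knowing that $\EC_{p^\infty}^{G_S(F_\infty)}$ (and its local analogues) is finite, which is not automatic, and then that taking $H^1(H,-)$ of a cofinitely generated but possibly infinite $\Zp$-module still yields something finite. This is precisely where the hypotheses of \ref{assmpn: for PR case} — $p$ odd and $\neq 3$, $p$ split in $K$, CM by $\OK$, $G\simeq\Zp^2$ — must be brought to bear, presumably invoking a Perrin-Riou-type vanishing result (the paper has announced it will use \cite{PR81}) to see that the relevant local and global $H$-cohomology groups are finite. If instead the authors route the argument through the $\Z^2_S$ Iwasawa-cohomology description and the exact sequence relating $\Y$ to $\Z^2_S$, the same finiteness inputs reappear as the statement that $\Z^2_S(T_p\EC/F_\infty)$ is $\Lambda[G]$-torsion with controlled descent to $F_{\cyc}$; either way, the crux is the finiteness of the descent defect for $H$, and everything else is the routine diagram chase.
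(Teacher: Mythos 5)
Your proposal is correct in outline, but it follows a different (and also valid) route from the paper's main proof --- in fact it essentially reproduces the alternative argument the paper records in the Remark following the lemma. You work with the fundamental diagram coming from Hochschild--Serre for $G_S(F_\infty)\trianglelefteq G_S(F_{\cyc})$, so the error terms you must control are $H^1\left(H,\EC_{p^\infty}\right)$ (note $\EC_{p^\infty}^{G_S(F_\infty)}=\EC_{p^\infty}$ since $F_\infty$ is the trivializing extension) and the local terms $H^1\left(H_v,\EC(F_{\infty,v})_{p^\infty}\right)$; their finiteness is exactly what the paper's Remark gets from Ribet's theorem \cite{Rib81} (finiteness of $\EC(F_{\cyc})_{p^\infty}$) and Imai's theorem \cite{Im75} (finiteness of $\EC(F_{\cyc,v})_{p^\infty}$) together with the structure theory over $\lg[H_v]\simeq\Zp\llbracket T\rrbracket$, and your CM-character argument (each character of $G$ on $\EC_{p^\infty}$ restricts nontrivially to $H$, since $H$ acting trivially would force $G$ to have rank $1$) gives the same finiteness directly; no Perrin-Riou input is needed on this route, contrary to your guess at the end. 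The paper's own proof instead uses the exact sequence $0\to R(\EC/\cdot)\to\Sel(\EC/\cdot)\to C(\cdot)\to 0$ with $C$ the image of localization at the primes above $p$, and quotes Perrin-Riou \cite[Lemme~1.1(\romannumeral 1) and Lemme~1.3]{PR81} to say that the middle restriction map $\Sel(\EC/F_{\cyc})\to\Sel(\EC/F_\infty)^H$ is an \emph{isomorphism}; then only the local kernel $\bigoplus_{v\mid p}H^1\left(H_v,\EC(F_{\infty,v})_{p^\infty}\right)$ has to be shown finite (as in \cite[Lemma~4.2]{CS05}) before applying the snake lemma. What Perrin-Riou buys is that the global error term is handled in one stroke (and $\ker\alpha=0$ on the nose); what your route buys is independence from the full Selmer-group control theorem, at the cost of having to bound both global and local descent defects. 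One small point of care on your route: the right-hand map of the defining sequence of $R(\EC/F_{\cyc})$ need not be surjective onto the full local sum, so you should truncate to images (as the paper does with $C(F_{\cyc})$) before invoking the snake lemma; with that adjustment, and using $\cd_p(H)=1$ to see that the middle restriction map is surjective, your diagram chase goes through.
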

\begin{proof}
Let $L$ be a finite extension of $F$ contained in $F_S$.
For each $v\in S$, write $W_v(L) = {\bigoplus_{w\mid v}} \EC(L_w)\otimes\QQ_p/ \Zp$.
We have the maps
\begin{align*}
 r_{\cyc}\colon &\Sel(\EC/F_{\cyc}) \longrightarrow \bigoplus_{v\mid p} W_v(F_{\cyc})
 \intertext{ and }
 r_{\infty}\colon &\Sel(\EC/F_\infty) \longrightarrow \bigoplus_{v\mid p} W_v(F_{\infty})
\end{align*}
where $W_v(F_{\cyc})$ (\emph{resp.}~$W_v(F_{\infty}$)) is the direct limit of $W_v(L)$ with respect to the restriction map as $L$ ranges over all finite extensions of $F$ contained in $F_{\cyc}$ (\emph{resp.}~$F_{\infty}$).
Write $C(F_{\cyc})$ (\emph{resp.}~$C(F_{\infty})$) for the image of $r_{\cyc}$ (\emph{resp.}~$r_{\infty}$).
Consider the following diagram
\[
\begin{tikzcd}
 0 \arrow[r] & R(\EC/F_{\cyc})_p \arrow[r] \arrow[d, "\alpha"] & \Sel(\EC/F_{\cyc})_p \arrow[r] \arrow[d, "\beta"] & C(F_{\cyc})_p \arrow[d, "\gamma"] \arrow[r] & 0\\ 
 0 \arrow[r] & R(\EC/F_{\infty})_p^H \arrow[r] & \Sel(\EC/F_{\infty})_p^H \arrow[r] & C(F_{\infty})_p^H
 \end{tikzcd}
\]
Note that $\beta$ is an isomorphism (see \cite[Lemme~1.1(\romannumeral 1) and Lemme~1.3]{PR81}).
Therefore $\ker(\beta)$ and $\coker(\beta)$ are trivial; hence $\ker(\alpha)=0$.
Further, observe that there is an inclusion
\[
\ker \gamma \subseteq \ker\left(\bigoplus_{v\mid p} K_v\left(F_{\cyc}\right) \xrightarrow{\delta_v} K_v\left(F_\infty\right)^H\right).
\]
Now, observe that 
\[
\bigoplus_{v\mid p} \ker(\delta_v) = \bigoplus_{v\mid p} H^1\left(H_v , \EC(F_{\infty,v})_{p^\infty}\right).
\]
This latter object is known to be finite by using an argument identical to \cite[proof of Lemma~4.2]{CS05}.
Therefore, by the snake lemma, $\coker(\alpha)$ must be finite.
\end{proof}

\begin{Remark}
Another way to prove this lemma was pointed out to us by the referee.
Consider the fundamental diagram
\[
\begin{tikzcd}
 0 \arrow[r] & R(\EC/F_{\cyc})_p \arrow[r] \arrow[d, "\alpha"] & H^1\left(G_S(F_{\cyc}), \EC_{p^\infty}\right) \arrow[r] \arrow[d, "\beta"] & \bigoplus_{v\in S} K_v\left( F_{\cyc}\right) \arrow[d, "\gamma"] \arrow[r] & 0\\ 
 0 \arrow[r] & R(\EC/F_{\infty})_p^H \arrow[r] & H^1\left(G_S(F_{\infty}), \EC_{p^\infty}\right)^H \arrow[r] & \bigoplus_{v\in S} K_v\left( F_{\infty}\right)^H.
 \end{tikzcd}
\]
The map $\beta$ is the restriction map; it is surjective and $\ker(\beta) = H^1\left( H, \EC(F_{\infty})_{p^\infty}\right)$.
Similarly, $\ker(\gamma) = \bigoplus_{v\in S_p} H^1\left( H_v, \EC(F_{\infty,v})_{p^\infty}\right)$.
To show that $\ker(\gamma)$ is finite, we use an argument similar to \cite[proof of Lemma~4.2]{CS05}.
First, recall a result of H.~Imai \cite[Theorem]{Im75} which asserts that $\EC(F_{\cyc,v})_{p^\infty}$ is finite.
Note that $H_v\simeq \Zp$ and $\EC(F_{\infty,v})_{p^\infty}^\vee$ is a torsion $\lg[H_v]$-module (since it is in fact finitely generated over $\Zp$).
It follows that $H^1\left( H_v, \EC(F_{\infty,v})_{p^\infty}\right)$ is also finite.
In fact, $H^1\left( H_v, \EC(F_{\infty,v})_{p^\infty}\right)=0$ which can be proven in the same way as \cite[Lemma~5.4]{CSW01} using the fact that $H_v$ has $p$-cohomological dimension $1$.
Furthermore, since $\EC(F_{\cyc})_{p^\infty}$ is finite by a result of K.~Ribet \cite[Theorem~1]{Rib81}, the global version of the above argument ensures that $\ker(\beta)$ is also finite, see also \cite[pp.~834--835]{CS05}.
Applying the snake lemma, the lemma follows.
\end{Remark}

Since $\EC$ is an elliptic curve with CM, both $G$ and $H$ are abelian.
Under the assumption that $G\simeq \Zp^2$, we further know that $\lg[H]\simeq \Zp\llbracket T\rrbracket$.
We now state an equivalent condition for a $\lg$-module to be pseudonull.

\begin{prop}
\label{ven03 P5.4}
Let $M$ be a finitely generated $\lg$-module which is also finitely generated as a $\lg[H]$-module.
Then the module $M$ is $\lg$-torsion.
Further, $M$ is $\lg[H]$-torsion if and only if it is $\lg$-pseudonull.
\end{prop}

\begin{proof}
Note that $G\simeq H\times \Gamma$ where $\Gamma\simeq \Zp$.
The first assertion follows from the fact that $\lg$ is not finitely generated over $\lg[H]$.
The second assertion is a special case of \cite[Proposition~5.4]{Ven03}.
\end{proof}

\begin{Lemma}
\label{lemma: finite implies pseudonull} Let $M$ be a finitely generated $\lg[G]$-module which is also finitely generated over $\lg[H]$.
If $M_H$ is finite, then $M$ is a pseudonull $\lg$-module.
\end{Lemma}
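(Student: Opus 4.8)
The plan is to reduce the statement to Proposition~\ref{ven03 P5.4}. Since $M$ is finitely generated both over $\lg[G]$ and over $\lg[H]$, that proposition already tells us that $M$ is $\lg$-torsion and, moreover, that $M$ is $\lg$-pseudonull if and only if it is $\lg[H]$-torsion. So the entire content of the lemma is the implication ``$M_H$ finite $\implies$ $M$ is $\lg[H]$-torsion''. Recall that, under~\ref{assmpn: for PR case}, one has $\lg[H]\simeq\Zp\llbracket T\rrbracket$; fixing such an identification with $T=\gamma-1$ for a topological generator $\gamma$ of $H$, one gets $M_H=M/TM$, and since $M$ is finitely generated over $\lg[H]$ this is a finitely generated $\Zp$-module, which by hypothesis is finite, hence annihilated by some $p^N$.

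The key step will be a localisation at the height-one prime $\mathfrak p=(T)$ of $R:=\lg[H]\simeq\Zp\llbracket T\rrbracket$. The localisation $R_{\mathfrak p}$ is a discrete valuation ring with uniformiser $T$, and --- this is the point --- $p\notin\mathfrak p$, so $p$ (hence $p^N$) is a unit in $R_{\mathfrak p}$. Therefore localising the finite module $M/TM$ at $\mathfrak p$ annihilates it: $M_{\mathfrak p}/TM_{\mathfrak p}=(M/TM)_{\mathfrak p}=0$, i.e.\ $M_{\mathfrak p}=TM_{\mathfrak p}$. As $M_{\mathfrak p}$ is finitely generated over the local ring $R_{\mathfrak p}$ and $T$ lies in its maximal ideal, Nakayama's lemma gives $M_{\mathfrak p}=0$. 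Since $\Frac(R)=\Frac(R_{\mathfrak p})$, this forces $M\otimes_{R}\Frac(R)=0$, that is, $M$ has $\lg[H]$-rank $0$, which is to say that $M$ is $\lg[H]$-torsion. Proposition~\ref{ven03 P5.4} then yields that $M$ is pseudonull as a $\lg$-module, as desired.

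I expect this to be essentially all there is to it: the lemma is a soft commutative-algebra fact. An alternative route is via the structure theorem for finitely generated $\Zp\llbracket T\rrbracket$-modules: choosing a pseudo-isomorphism $M\to E:=\lg[H]^{r}\oplus(\text{torsion})$ with finite kernel and cokernel, one checks (using right-exactness of $-\otimes_{\lg[H]}\Zp$ together with the two-term free resolution of $\Zp=\lg[H]/T$) that $M/TM$ and $E/TE$ differ only by finite groups, and since $E/TE\supseteq\Zp^{\,r}$, finiteness of $M_H$ forces $r=0$. The one pitfall to keep in mind with either approach is that ``$M_H$ finite'' is genuinely weaker than ``$M_H=0$'', so one cannot run a Nakayama argument over $\lg[H]$ itself (whose maximal ideal is $(p,T)$); inverting $p$ is essential, and passing to $R_{\mathfrak p}$ is precisely what makes that legitimate. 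Everything else is routine bookkeeping with localisation, Nakayama, and Proposition~\ref{ven03 P5.4}.
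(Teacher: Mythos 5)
Your proof is correct and follows essentially the same route as the paper: reduce to showing that $M$ is $\lg[H]$-torsion and then invoke Proposition~\ref{ven03 P5.4}. The only difference is cosmetic — where the paper cites the structure theory of finitely generated $\Zp\llbracket T\rrbracket$-modules for the implication ``$M_H$ finite $\Rightarrow M$ is $\lg[H]$-torsion'', you supply an explicit (and valid) localisation-at-$(T)$ plus Nakayama argument, and your caveat that finiteness of $M_H$ is weaker than its vanishing is well taken.
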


\begin{proof}
We are grateful to the referee for suggesting the following proof, which is simpler than the one we had in a first version of our manuscript.
Since $H\cong \ZZ_p$, it follows from the structure theory of $\lg[H]$-modules that whenever $M_H$ is finite, then $M$ is torsion over $\lg[H]$.
The conclusion of the lemma is now immediate from Proposition~\ref{ven03 P5.4}.
\end{proof}

The main theorem of this section is the following.
\begin{Th}
\label{theorem: main theorem CM}
Suppose that \ref{assmpn: for PR case} holds.
If $\Y(\EC/F_{\cyc})$ is finite, then \ref{conj:B} holds for $(\EC,F_{\infty})$.
\end{Th}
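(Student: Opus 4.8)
The strategy is to establish the two parts of \ref{conj:B} in turn. Since \ref{conj:A} for $\EC/F$ is, by definition, the assertion that $\Y(\EC/F_{\cyc})$ is finitely generated over $\Zp$, and a finite abelian group is trivially finitely generated over $\Zp$, the hypothesis that $\Y(\EC/F_{\cyc})$ is finite immediately yields \ref{conj:A}; one could alternatively invoke Theorem~\ref{thm: cs05 thm 3.4}, since $G$ is pro-$p$ under \ref{assmpn: for PR case}, but this is not needed. It remains to prove that $\Y(\EC/F_\infty)$ is pseudonull over $\lg[G]$. Under \ref{assmpn: for PR case} the extension $F_\infty/F$ is $S$-admissible and $G = \Gal(F_\infty/F) \simeq \Zp^2$ has dimension $2 > 1$, so this is exactly the module-theoretic content of \ref{conj:B} for $(\EC, F_\infty)$, and I would obtain it by applying Lemma~\ref{lemma: finite implies pseudonull} to the module $M := \Y(\EC/F_\infty)$.

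It therefore suffices to check the three hypotheses of Lemma~\ref{lemma: finite implies pseudonull} for $M$. That $M$ is finitely generated over $\lg[G]$ is the standard finiteness property of dual fine Selmer groups recalled in the introduction. That the $H$-coinvariants $M_H$ are finite is immediate from Lemma~\ref{lem: pseudo-isomorphism}, which furnishes a map $\Y(\EC/F_\infty)_H \to \Y(\EC/F_{\cyc})$ with finite kernel and cokernel; since the target is finite by hypothesis, so is $M_H$. The remaining hypothesis, that $M$ is finitely generated over $\lg[H] \simeq \Zp\llbracket T\rrbracket$, is the only one requiring a separate argument: regarding $M$ as a compact module over the local Noetherian profinite ring $\lg[H]$, whose maximal ideal $\mathfrak{m}_H$ contains the augmentation ideal of $H$, the quotient $M/\mathfrak{m}_H M$ is a quotient of $M_H$ and hence finite, so the topological Nakayama lemma forces $M$ to be finitely generated over $\lg[H]$.

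With these three facts in hand, Lemma~\ref{lemma: finite implies pseudonull} yields that $\Y(\EC/F_\infty)$ is pseudonull over $\lg[G]$, and combined with the verification of \ref{conj:A} this is exactly the statement of \ref{conj:B} for $(\EC, F_\infty)$. The only step that is not a matter of unwinding definitions is the passage from finiteness of $M_H$ to finite generation of $M$ over $\lg[H]$; I expect this to be the sole point demanding any care, and even it is routine, the key observation being that reduction modulo $\mathfrak{m}_H$ kills both $p$ and $H$, so that finiteness of the $H$-coinvariants propagates through Nakayama.
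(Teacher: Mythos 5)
Your proposal is correct and follows essentially the same route as the paper: finiteness of $\Y(\EC/F_{\cyc})$ gives finiteness of $\Y(\EC/F_\infty)_H$ via Lemma~\ref{lem: pseudo-isomorphism}, and then Lemma~\ref{lemma: finite implies pseudonull} yields pseudonullity, with \ref{conj:A} being immediate from the finiteness hypothesis. Your extra step verifying finite generation over $\lg[H]$ by topological Nakayama is a sound (and welcome) filling-in of a hypothesis the paper applies implicitly.
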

\begin{proof}
By Lemma~\ref{lem: pseudo-isomorphism}, if $\Y(\EC/F_{\cyc})$ is finite, then so is $\Y\left(\EC/F_{\infty}\right)_H$.
The theorem follows from Lemma~\ref{lemma: finite implies pseudonull}.
\end{proof}

\begin{Remark}
We point out that for a given prime $p$, we cannot conclude that \ref{conj:B} holds for $(\EC,F_{\infty})$ for a rank $0$ elliptic curve $\EC/F$ with CM by combining Theorems~\ref{Theorem: Construction} and \ref{theorem: main theorem CM}.
This is because, in the proof of Theorem~\ref{Theorem: Construction} it was required that the elliptic curve does not admit any non-trivial $p$-torsion point over $F$.
However, in proving Theorem~\ref{theorem: main theorem CM}, we assume that $F_\infty/F$ is a pro-$p$ extension; hence $F$ must contain non-trivial $p$-torsion points.
\end{Remark}

Another case where we can show \ref{conj:B} is the following.

\begin{prop}
\label{prop: Howson}
Suppose that \ref{assmpn: for PR case} holds.
Further assume that $\mathfrak{X}(\EC/F_{\cyc})$ is a finitely generated $\Zp$-module of $\ZZ_p$-rank $2$ and that $\EC(F_\infty)$ has a point of infinite order.
Then~\ref{conj:B} holds for $(\EC,F_{\infty})$.
\end{prop}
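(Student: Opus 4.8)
The plan is to deduce everything from the structural results of this section. Since $\X(\EC/F_{\cyc})$ is finitely generated over $\Zp$ by hypothesis, so is its quotient $\Y(\EC/F_{\cyc})$, and hence \ref{conj:A} holds for $\EC/F$; it remains to prove that $\Y(\EC/F_\infty)$ is pseudonull over $\lg[G]$. Keeping the notation $H=\Gal(F_\infty/F_{\cyc})$, so that $\lg[H]\simeq\Zp\llbracket T\rrbracket$, I would invoke Proposition~\ref{ven03 P5.4}: it is enough to show that $\Y(\EC/F_\infty)$ is finitely generated over $\lg[H]$ and is $\lg[H]$-torsion. Finite generation follows from the fact, established in the proof of Lemma~\ref{lem: pseudo-isomorphism}, that the restriction map $\Sel(\EC/F_{\cyc})\to\Sel(\EC/F_\infty)^{H}$ is an isomorphism: dualizing gives $\X(\EC/F_\infty)_{H}\cong\X(\EC/F_{\cyc})$, which is finitely generated over $\Zp$ of rank $2$, so topological Nakayama shows that $\X(\EC/F_\infty)$, and therefore its quotient $\Y(\EC/F_\infty)$, is finitely generated over $\lg[H]$, of $\lg[H]$-rank at most $2$.

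It remains to prove that $\Y(\EC/F_\infty)$ is $\lg[H]$-torsion. Dualizing the direct limit of \eqref{eq 58 cs05} yields a short exact sequence $0\to\mathcal{C}^\vee\to\X(\EC/F_\infty)\to\Y(\EC/F_\infty)\to 0$, where $\mathcal{C}$ denotes the image of $\Sel(\EC/F_\infty)$ inside $\bigoplus_{v\mid p}\bigl(\EC(F_{\infty,v})\otimes\QQ_p/\Zp\bigr)$; hence $\rank_{\lg[H]}\Y(\EC/F_\infty)\le 2-\rank_{\lg[H]}\mathcal{C}^\vee$, and it suffices to show $\rank_{\lg[H]}\mathcal{C}^\vee\ge 2$. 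The Kummer sequence embeds $\EC(F_\infty)\otimes\QQ_p/\Zp$ into $\Sel(\EC/F_\infty)$, and its further image in $\bigoplus_{v\mid p}(\EC(F_{\infty,v})\otimes\QQ_p/\Zp)$, which lies in $\mathcal{C}$, has finite kernel: a point of infinite order stays of infinite order in every completion while $\EC(F_{\infty,v})_{p^\infty}$ is finite — the local input already used in the proof of Lemma~\ref{lem: pseudo-isomorphism}, resting on Imai~\cite[Theorem]{Im75}. Thus $\mathcal{C}^\vee$ surjects, up to finite kernel, onto $(\EC(F_\infty)\otimes\QQ_p/\Zp)^\vee$, and the task reduces to showing that this module has $\lg[H]$-rank at least $2$. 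Here I would bring in the CM hypothesis: a point $P\in\EC(F_\infty)$ of infinite order generates, through the $\OK$-action, which is defined over $K\subseteq F$, a submodule $\OK\cdot P\cong\OK$; the decomposition $\EC_{p^\infty}=\EC_{\fp^\infty}\oplus\EC_{\overline{\fp}^\infty}$ (legitimate since $p$ splits in $K$) splits $\EC(F_\infty)\otimes\Zp$ into $\fp$- and $\overline{\fp}$-eigencomponents, each meeting $\OK\cdot P$ nontrivially, and the growth of Mordell--Weil along the anticyclotomic tower $F_\infty/F_{\cyc}$, combined with the upper bound $\rank_{\lg[H]}\X(\EC/F_\infty)\le 2$, forces each eigencomponent to contribute $\lg[H]$-corank exactly $1$. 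Hence $\rank_{\lg[H]}\mathcal{C}^\vee\ge 2$, so $\Y(\EC/F_\infty)$ is $\lg[H]$-torsion, and by Proposition~\ref{ven03 P5.4} it is pseudonull; thus \ref{conj:B} holds for $(\EC,F_\infty)$.

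I expect the main obstacle to be exactly that last point, that the Mordell--Weil contribution over $F_\infty$ attains the full $\lg[H]$-corank $2$. A single point of infinite order, living in some finite layer $F_n$, has $\lg[H]$-torsion orbit, so the rank statement cannot be obtained by a naive descent to $F_{\cyc}$; one must genuinely combine the $\OK$-module structure of $\EC(F_\infty)$ with the behaviour of CM Mordell--Weil groups in the anticyclotomic direction inside $F_\infty$, using the rank-$2$ hypothesis on $\X(\EC/F_{\cyc})$ to bound the coranks from above. The other ingredients — the control isomorphism, topological Nakayama, the local finiteness statements, and Proposition~\ref{ven03 P5.4} — are those already in play for Lemma~\ref{lem: pseudo-isomorphism} and Theorem~\ref{theorem: main theorem CM}.
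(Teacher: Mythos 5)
Your reduction steps are fine and run parallel to the paper: \ref{conj:A} follows from the hypothesis on $\X(\EC/F_{\cyc})$, the Perrin-Riou control isomorphism from the proof of Lemma~\ref{lem: pseudo-isomorphism} plus Nakayama gives that $\X(\EC/F_\infty)$ (hence $\Y(\EC/F_\infty)$) is finitely generated over $\lg[H]$ of rank at most $2$, and Proposition~\ref{ven03 P5.4} reduces everything to showing that $\Y(\EC/F_\infty)$ is $\lg[H]$-torsion. The gap is exactly where you suspect it: the claim that the image $\mathcal{C}$ of $\Sel(\EC/F_\infty)$ in $\bigoplus_{v\mid p}\bigl(\EC(F_{\infty,v})\otimes\QQ_p/\Zp\bigr)$ has $\lg[H]$-corank at least $2$ is never proved. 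The assertion that ``the growth of Mordell--Weil along the anticyclotomic tower \dots forces each eigencomponent to contribute corank exactly $1$'' is not a consequence of the hypotheses: a single point of infinite order, together with its $\OK$-span, generates a subgroup of $\EC(F_\infty)$ that is finitely generated over $\ZZ$, hence contributes $\lg[H]$-corank $0$; nothing in the assumptions forces the Mordell--Weil rank to grow unboundedly up $F_\infty/F_{\cyc}$, so the global Mordell--Weil route cannot by itself yield corank $2$. Moreover, the auxiliary local statement you invoke is false: over the trivializing extension one has $\EC(F_{\infty,v})_{p^\infty}=\EC_{p^\infty}\simeq(\QQ_p/\Zp)^2$, which is infinite; Imai's theorem \cite{Im75} concerns $\EC(F_{\cyc,v})_{p^\infty}$, and in Lemma~\ref{lem: pseudo-isomorphism} it is used to control $H^1(H_v,\EC(F_{\infty,v})_{p^\infty})$, not to assert finiteness of the torsion over $F_{\infty,v}$. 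So the kernel/image analysis of $\EC(F_\infty)\otimes\QQ_p/\Zp$ in the local terms does not go through as written.

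For comparison, the paper closes precisely this step by quoting two results rather than arguing from global points: Lemma~\ref{pot good red} gives good reduction everywhere, Howson's theorem \cite[Theorem~2.8]{How02} then yields $\rank_{\lg[H]}\X(\EC/F_\infty)=\rank_{\Zp}\X(\EC/F_{\cyc})=2$ (the $\Zp$-finite generation of $\X(\EC/F_{\cyc})$ supplying the needed torsion hypothesis, and $\mu_p\subseteq F$ coming from the Weil pairing), and \cite[Theorem~4.5-(ii)]{CS05} — whose hypothesis is exactly the existence of a point of infinite order in $\EC(F_\infty)$ — gives $\rank_{\lg[H]}\Y(\EC/F_\infty)\leq \rank_{\lg[H]}\X(\EC/F_\infty)-2=0$. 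If you want a self-contained argument you would essentially have to reprove that inequality, whose proof rests on local considerations at the primes above $p$ (deeply ramified theory) rather than on anticyclotomic growth of Mordell--Weil ranks; as it stands, your proposal leaves the decisive inequality unestablished.
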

\begin{proof}
By \ref{assmpn: for PR case}, we know that $\EC/F$ has good reduction everywhere.
Next, it follows from \cite[Theorem~2.8]{How02} that
\[\label{eq:howson}
\rank_{\lg[H]}\mathfrak{X}(\EC/F_\infty) = \rank_{\Zp} \mathfrak{X}(\EC/F_{\cyc}).
\]
We explain this briefly.
To apply \cite[Theorem~2.8]{How02} one must assume that Conjecture~2.5 \emph{ibid.} holds.
As mentioned on p.~649 \emph{ibid.}, this conjecture is equivalent to Conjecture~2.6~\emph{ibid.} when all primes above $p$ have good ordinary reduction.
This conjecture predicts that $\mathfrak{X}(\EC/F_{\cyc})$ is $\lg[\Gamma]$-torsion and our hypothesis that $\mathfrak{X}(\EC/F_{\cyc})$ is a finitely generated $\Zp$-module accounts for it.
For the final assumption in Theorem~2.8 \emph{ibid,} the inclusion $\mu_p\subseteq F$ is ensured by the Weil pairing.

It is shown in \cite[Theorem~4.5-(ii)]{CS05} that
\[
\rank_{\lg[H]}\mathfrak{Y}(\EC/F_\infty)\leq \rank_{\lg[H]}\mathfrak{X}(\EC/F_\infty) -2.
\]
and our assumption, combined with~\eqref{eq:howson}, implies that $\rank_{\lg[H]}\mathfrak{Y}(\EC/F_\infty)=0$, showing that $\mathfrak{Y}(\EC/F_\infty)$ is $\lg[G_{F_\infty/F}]$-pseudonull.
\end{proof}

\begin{Remark}\label{rmk:howson}
We are grateful to the referee for the following observation.
In\cite[Theorem~4.5-(i)]{CS05} it is shown that if $\rank_{\lg[H]}\mathfrak{X}(\EC/F_\infty)$ is odd, then
\[
\rank_{\lg[H]}\mathfrak{Y}(\EC/F_\infty)\leq \rank_{\lg[H]}\mathfrak{X}(\EC/F_\infty) -1.
\]
Since $\mathfrak{X}(\EC/F_{\cyc})$ is a finitely generated $\Zp$-module, if $\rank_{\Zp}\mathfrak{X}(\EC/F_{\cyc})= 1$, it would follow that $\mathfrak{Y}(\EC/F_\infty)$ is $\lg[G_{F_\infty/F}]$-pseudonull.
Unfortunately, though, one can show that under our assumptions $\rank_{\lg[H]}\mathfrak{X}(\EC/F_\infty)$ is always even, and therefore the above argument cannot be used to show~\ref{conj:B} in more cases.
The argument forcing $\rank_{\lg[H]}\mathfrak{X}(\EC/F_\infty)$ to be even comes from CM theory: indeed, by functoriality, the $\lg[H]$-module $\mathfrak{X}(\EC/F_\infty)$ comes endowed with a structure of an $\OK$-module, and therefore it is ultimately a $\lg[H]\otimes \OK$-module.
For brevity, denote $\lg[H]\otimes\OK$ by $\lgK[H]$, and set $\qh=\Frac(\lg[H]),\qhK=\Frac(\widetilde{\lg[H]})$.
Clearly, $\lgK[H]$ is a finite extension of $\lg[H]$ of rank $2$: in particular, it is integral so that $\qhK=\lgK[H]\otimes_{\lg[H]}\qh$ and $\dim_{\qh}\qhK=2$.
Now, by definition,
\begin{align*}
2\cdot\rank_{\lgK[H]}\mathfrak{X}(\EC/F_\infty)&=2\cdot\dim_{\qhK}\Bigl(\mathfrak{X}(\EC/F_\infty)\otimes_{\lgK[H]}\qhK\Bigr)\\
&=2\cdot\dim_{\qhK}\Bigl(\mathfrak{X}(\EC/F_\infty)\otimes_{\lgK[H]}\lgK[H]\otimes_{\lg[H]}\qh)\\
&=2\cdot\dim_{\qhK}\Bigl(\mathfrak{X}(\EC/F_\infty)\otimes_{\lg[H]}\qh)\\
&=\dim_{\qh}\Bigl(\mathfrak{X}(\EC/F_\infty)\otimes_{\lg[H]}\qh)\\
&=\rank_{\lg[H]}\mathfrak{X}(\EC/F_\infty),
\end{align*}
showing that $\rank_{\lg[H]}\mathfrak{X}(\EC/F_\infty)$ is indeed even.
\end{Remark}

\subsection{Reformulation of \texorpdfstring{\ref{conj:B}}{}}
\label{reformulation}
Let $\EC/F$ be an elliptic curve, and let $\linf$ be an $S$-admissible $p$-adic Lie extension containing the trivializing extension $F_\infty$.
Throughout this section we suppose that \ref{conj:A} holds for $\EC/F$.
Since $G_S(\linf)$ acts trivially on $\EC_{p^\infty}$, \ref{conj:B} for $(\EC,\linf)$ has an equivalent formulation in terms of the pseudonullity of the Galois group $\Gal(M(\linf)/\linf)$, where $M(\linf)$ is the maximal unramified abelian pro-$p$ extension of $\linf$ such that all primes above $p$ in $\linf$ split completely.

\begin{Reform*}[{see~\cite[p.~827]{CS05}}]
Let $\EC/F$ be an elliptic curve, and let $\linf$ be an $S$-admissible, $p$-adic Lie extension over $F$ such that $G_S\left(\linf\right)$ acts trivially on $\EC_{p^\infty}$.
Then $\Y\left(\Zp(1)/{\linf}\right)$ is $\lg[G_{\linf/F}]$-pseudonull.
\end{Reform*}

The next result asserts that for an $S$-admissible $p$-adic Lie extension $\linf/F$ containing $F_\infty$, the $\lg[G_{\linf/F}]$-pseudonullity of the Iwasawa module $X_{\nr}^\linf$ is equivalent to the pseudonullity of a certain quotient module.
(The notation $X_{\nr}^\linf$ was introduced at the beginning of this section).
This result is well-known to experts and follows from results available in the literature, see for example \cite[Theorem~4.9]{Ven03_Compositio}.
For the convenience of the reader, a proof is provided here purely relying on techniques that are more germane to our paper.

\begin{Th}
\label{thm:appendix}
Let $\EC/F$ be an elliptic curve with CM by an order in an imaginary quadratic field $K$ such that $K\subseteq F$ and suppose that $\Gal(F_\infty/F)\simeq \Zp^2$.
Let $\linf/F$ be an abelian $S$-admissible $p$-adic Lie extension containing $F_\infty$.
Then, the following statements are equivalent
\begin{enumerate}[label = \textup{(\alph*)}]
 \item The~\ref{conj:Iwasawa} is true for $F$ and $X_{\nr}^{\linf}$ is $\lg[G_{\linf/F}]$-pseudonull.\label{pt:them_appendix:mu}
 \item \ref{conj:B} holds for $(\EC,\linf)$.\label{pt:them_appendix:conjB}
 \item The~\ref{conj:Iwasawa} is true for $F$ and $\Y(\Zp(1)/\linf)$ is $\lg[G_{\linf/F}]$-pseudonull.\label{pt:them_appendix:Iwa}
\end{enumerate}
\end{Th}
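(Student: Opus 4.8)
The plan is to prove the cycle of implications $\ref{pt:them_appendix:mu}\Rightarrow\ref{pt:them_appendix:conjB}\Rightarrow\ref{pt:them_appendix:Iwa}\Rightarrow\ref{pt:them_appendix:mu}$, with the first implication being the substantive one. Throughout, note that since $K\subseteq F$ and $\EC$ has CM, the group $G_S(\linf)$ acts trivially on $\EC_{p^\infty}$ (because $\linf\supseteq F_\infty=F(\EC_{p^\infty})$), so by the discussion in \S\ref{section from Kato's work} we have $H^2(G_S(\linf),\EC_{p^\infty})=0$ and, assuming \ref{conj:A}, \ref{conj:B} for $(\EC,\linf)$ is equivalent to the Reformulation: $\Y(\Zp(1)/\linf)$ is $\lg[G_{\linf/F}]$-pseudonull. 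Moreover $\Y(\Zp(1)/\linf)=\Gal(M(\linf)/\linf)$, which is the quotient of $X_{\nr}^{\linf}=\Gal(L(\linf)/\linf)$ by the closed subgroup generated by the decomposition groups of the (finitely many) primes above $p$.

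For $\ref{pt:them_appendix:mu}\Rightarrow\ref{pt:them_appendix:conjB}$: first, the \ref{conj:Iwasawa} for $F$ gives \ref{conj:A} for $\EC/F$ via Theorem~\ref{thm: cs05 thm 3.4} (applicable since $F_\infty/F$ contains the pro-$p$ extension $F_\infty/F(\EC_p)$ and $[F(\EC_p):F]$ is prime to $p$, so the relevant hypothesis is met after a harmless base change, or directly since $\mu$-invariants are unchanged in prime-to-$p$ extensions). Then, given that $X_{\nr}^{\linf}$ is pseudonull, its quotient $\Y(\Zp(1)/\linf)=\Gal(M(\linf)/\linf)$ is also pseudonull — a quotient of a pseudonull module over $\lg[G_{\linf/F}]$ (a commutative Auslander-regular ring, since $G_{\linf/F}$ is abelian without $p$-torsion) is pseudonull, because $\Ann$ only grows. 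By the Reformulation this is exactly \ref{conj:B} for $(\EC,\linf)$.

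For $\ref{pt:them_appendix:conjB}\Rightarrow\ref{pt:them_appendix:Iwa}$: \ref{conj:B} in particular includes the assertion that \ref{conj:A} holds for $\EC/F$, which by Theorem~\ref{thm: cs05 thm 3.4} is equivalent to the \ref{conj:Iwasawa} for $F$; and the pseudonullity half of \ref{conj:B}, via the Reformulation, is precisely the pseudonullity of $\Y(\Zp(1)/\linf)$. For $\ref{pt:them_appendix:Iwa}\Rightarrow\ref{pt:them_appendix:mu}$ one must go back up from $\Y(\Zp(1)/\linf)=\Gal(M(\linf)/\linf)$ to $X_{\nr}^{\linf}=\Gal(L(\linf)/\linf)$: the kernel of the surjection is generated by the decomposition subgroups $D_v$ for $v\mid p$ in $\linf$, and since there are only finitely many such primes and each $D_v$ is a quotient of the decomposition group in $\Gal(\linf/F)\hookrightarrow \Zp^{d}$-type group, it is procyclic-by-(something of dimension $\le d$); the key point is that each $D_v$, being a subquotient of the inertia/decomposition data of the abelian extension $\linf/F$ in which $v$ is (by admissibility) only cyclotomically ramified, has Krull dimension at most... here one argues that the contribution of the primes above $p$ to $X_{\nr}^{\linf}$ is pseudonull. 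This is the main obstacle: controlling the decomposition groups. I expect to handle it by noting that $\linf/F$ is abelian and contains $F_\infty$ with $\Gal(\linf/F_\infty)$ and the splitting behaviour governed by local CM theory (the primes above $p$ split in $K'(\EC_p)/K$ and the decomposition group at $p$ in a $\Zp^2$-extension has $\Zp$-corank contributing codimension $\ge 1$; combined with the cyclotomic direction one gets codimension $\ge 2$), so that $X_{\nr}^{\linf}/\Y(\Zp(1)/\linf)$ is pseudonull, whence $X_{\nr}^{\linf}$ is an extension of a pseudonull module by a pseudonull module, hence pseudonull. The \ref{conj:Iwasawa} for $F$ is inherited directly from the hypothesis in \ref{pt:them_appendix:Iwa}. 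Thus the main work is in the last implication, specifically in verifying that the decomposition groups of primes above $p$ in $\linf$ cut out a pseudonull submodule of $X_{\nr}^{\linf}$ — and I would anticipate invoking the fact, available in the CM setting, that the local extension at $p$ inside $\linf/F$ is large enough (of dimension $\ge 2$ at each such prime, or at least contributes codimension $2$ globally) for this to hold.
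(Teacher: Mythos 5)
Your overall architecture is the right one, and it matches the paper's: the equivalence of \ref{pt:them_appendix:conjB} with \ref{pt:them_appendix:Iwa} comes from identifying the pseudonullity part of \ref{conj:B} with that of $\Y(\Zp(1)/\linf)=\Gal(M(\linf)/\linf)$ (the paper does not merely quote the Reformulation here: it proves the equivalence via the isomorphism $\Y(\EC/\linf)\simeq \Y(\Zp(1)/\linf)\otimes \EC_{p^\infty}^\vee$ and the invariance of pseudonullity under such twists, citing Venjakob and Ochi--Venjakob, so your black-box appeal to the Reformulation is a shortcut rather than an error), and the comparison of $X_{\nr}^{\linf}$ with its quotient $\Gal(M(\linf)/\linf)$ is exactly where the real content lies. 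However, at that point your argument has a genuine gap: the pseudonullity of the kernel of $X_{\nr}^{\linf}\twoheadrightarrow \Gal(M(\linf)/\linf)$ is precisely the statement that, for every $v\mid p$, the decomposition group of $v$ in $G_{\linf/F}$ has dimension at least $2$, and you never prove this -- you only write that you ``expect'' or ``anticipate'' handling it via local CM theory, with a muddled corank/codimension count. This is the crux of the implication \ref{pt:them_appendix:Iwa}$\Rightarrow$\ref{pt:them_appendix:mu} (and of the converse direction, which you do not need only because you pass through the quotient). The paper settles it by invoking Jannsen's exact sequence
\[
\bigoplus_{v\in S_{\cs}\cup S_{\ram}}\Ind_{G_{\linf/F}}^{G_{\linf/F,v}}(\ZZ_p)\longrightarrow X_{\nr}^{\linf}\longrightarrow X_{\cs}^{\linf}\longrightarrow 0,
\]
observing that $S_{\cs}=\emptyset$, $S_{\ram}=S_p$, and then quoting a precise result of Lannuzel--Nguyen Quang Do (whose hypothesis is met because $F\supseteq K'(\EC_p)\supseteq\mu_p$, using Lemma~\ref{pot good red}) to get $\dim G_{\linf/F,v}\geq 2$, hence pseudonullity of the induced terms. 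Without this input (or an equivalent local argument) your chain of implications does not close.

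Two secondary inaccuracies: first, your claim that there are ``only finitely many'' primes of $\linf$ above $p$ is not justified in general (the decomposition group can have dimension strictly less than $\dim G_{\linf/F}$); the correct statement is that there are finitely many $G_{\linf/F}$-orbits, which is why one works equivariantly with the induced modules $\ZZ_p\llbracket G_{\linf/F}/G_{\linf/F,v}\rrbracket$ as in Jannsen's sequence, whose dimension is $1+\dim(G_{\linf/F}/G_{\linf/F,v})$ and which is pseudonull exactly when $\dim G_{\linf/F,v}\geq 2$. Second, your parenthetical justification for applying Theorem~\ref{thm: cs05 thm 3.4} (``after a harmless base change'') is unnecessary and slightly off: the hypothesis $\Gal(F_\infty/F)\simeq\Zp^2$ already makes $\Gal(F_\infty/F)$ pro-$p$ (indeed it forces $F=F(\EC_p)$), so the theorem applies directly.
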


\begin{proof}Since $\EC/F$ has CM by the imaginary quadratic field $K$ contained in $F$ and $F_\infty/F$ is a $\ZZ_p^2$-extension, it follows that $F$ contains $K'(\EC_p)$ where $K'$ is the Hilbert class field of $K$ (see Lemma~\ref{pot good red}).
Moreover, since $\linf/F$ is an abelian extension containing $F_\infty$ and, by definition of being admissible, it contains no element of order $p$, it must be a $\Zp^d$-extension for some $d\geq 2$.
It follows that the only primes that can ramify in this extension are the primes above $p$ and therefore we can assume that $S=S_p \cup S_\infty$.

\medskip
\emph{Equivalence of~\ref{pt:them_appendix:mu} and of~\ref{pt:them_appendix:Iwa}}: 
We need to show that
\[
\Y\left(\Zp(1)/\linf\right) \text{ is } \lg[G_{\linf/F}]\text{-pseudonull } \Longleftrightarrow X_{\nr}^{\linf} \text{ is } \lg[G_{\linf/F}]\text{-pseudonull}.
\]
Write $X_{\cs}^{\linf}$ to denote the Galois group $\Gal\left(M(\linf)/\linf\right)$.
It is known by the work of U.~Jannsen (see for example~\cite[Proposition~4.7-(ii)]{Ven03_Compositio}) that there is an exact sequence
\[
\bigoplus_{v\in S_{\cs} \cup S_{\ram}} \Ind_{G_{\linf/F}}^{G_{\linf/F, v}}\left( \ZZ_p\right) \longrightarrow X_{\nr}^{\linf} \longrightarrow X_{\cs}^{\linf} \longrightarrow 0.
\]
Here, $S_{\cs}$ denotes the set of non-archimedean primes in $S$ which are completely split in $\linf/F$ and $S_{\ram}$ denotes the set of non-archimedean primes in $S$ which are ramified in $\linf/F$.
Note that in our setting $S_{\cs}=\emptyset$ because every prime above $p$ is finitely decomposed in $F_{\cyc}/F$, and $S_{\ram}=S_p$.
Since the base field $F$ contains $\mu_p$ it follows from~\eqref{eqn:hydfinesel(1.2)} that
\[
X_{\cs}^{\linf} = \Gal\left(M(\linf)/\linf\right) \simeq \Y\left(\Zp(1)/\linf\right).
\]
Therefore, to complete the proof of the equivalence it is enough to show that $X_{\nr}^{\linf}$ and $X_{\cs}^{\linf}$ are pseudo-isomorphic.
In other words, it suffices to prove that
\begin{equation*}
\bigoplus_{v\in S_p} \ZZ_p\llbracket G_{\linf/F}\rrbracket \otimes_{\ZZ_p\llbracket G_{\linf/F,v}\rrbracket} \ZZ_p = \bigoplus_{v\in S_p} \Ind_{G_{\linf/F}}^{G_{\linf/F,v}}\left( \ZZ_p\right)
\end{equation*}
is a $\lg[G_{\linf/F}]$-pseudonull module.
We know from \cite[Th\'eor\`eme~3.2]{LN00} (observe that since $F$ contains $K'(\EC_p)$, condition $(\mathrm{i})$ \emph{ibid.}~is satisfied, by the Weil pairing) that for all $v\mid p$, the decomposition group at $v$ inside $G_{\linf/F}$ has dimension at least $2$.
It follows that $\bigoplus_{v\in S_p}\Ind_{G_{\linf/F}}^{G_{\linf/F, v}}\left( \ZZ_p\right)$ is $\lg[G_{\linf/F,v}]$-pseudonull.
This completes the proof of the equivalence.

\medskip
\emph{Equivalence of~\ref{pt:them_appendix:conjB} and~\ref{pt:them_appendix:Iwa}}: 
It follows from the discussion in \cite[p.~825]{CS05} that
\begin{equation}\label{eq:tensorE}
\Y\left( \EC/\linf\right) \simeq \left(\Y( \Zp(1)/\linf ) \otimes \EC_{p^\infty}^\vee.\right)
\end{equation}
Here $G_{\linf/F}$ acts diagonally on the tensor product and $\EC_{p^\infty}^\vee$ is a $\Zp$-module with a $G_{\linf/F}$-action induced by the $G_S(F)$-action.
This latter action makes sense because $F_\infty$ is the trivializing extension of $\EC_{p^\infty}$.
In this setting, \ref{conj:A} for $\EC/F$ is equivalent to the \ref{conj:Iwasawa} for $F$ (see Theorem~\ref{thm: cs05 thm 3.4}).
Therefore, using~\cite[Proposition~2.12]{Ven03_Compositio} and~\cite[Proposition~3.4]{OV02}, the isomorphism in~\eqref{eq:tensorE} yields that $\Y\left( \EC/\linf\right)$ is $\lg[G_{\linf/F}]$-pseudonull if and only if $\Y\left( \Zp(1)/\linf\right)$ is $\lg[G_{\linf/F}]$-pseudonull.
\end{proof}

\begin{Remark}
In the above proof, the assumption that $\EC$ has CM is not really used.
We detail in \S\ref{subsec:non_commutative} the proof in the general case.
\end{Remark}

We now prove a special case of \ref{conj:B} in the supersingular reduction setting and provide applications pertaining to universal norms.
For the remainder of this section, we work in the following setting:
\begin{equation}\tag*{\textup{\textbf{Ass~2}}}\label{assmpn: for supersingular case}
\begin{minipage}{0.9\textwidth}
\begin{enumerate}[label=\textup{(}\roman*\textup{)}]
  \item $K$ is an imaginary quadratic field of class number 1;
  \item $\EC$ is an elliptic curve defined over $K$, and with CM by $\OK$;
  \item $p$ is an odd prime of good supersingular reduction for $\EC$;
  \item $p$ does not divide the order of the $S_p$-class group of $F=K(\EC_p)$;
\end{enumerate}
\end{minipage}
\end{equation}

\begin{Remark}
It follows from~\ref{assmpn: for supersingular case}-(ii) that the Galois group $G=\Gal(F_\infty/F)$ is isomorphic to $\Zp^2$ and we write 
$H=\Gal(F_\infty/F_{\cyc})$.
\end{Remark}
Recall that the $i$-th Iwasawa cohomology group over $F_\infty$ is defined, when $S=S_p$, as
\[
\Z^i_{S_p}\left( \Zp(1)/F_\infty\right) = \varprojlim_L H^i_{\text{\'et}}\left( \mathcal{O}_L[1/p], \Zp(1)\right),
\]
where $L$ ranges over all finite extensions of $F$ contained in $F_\infty$.

\begin{prop}
\label{prop: combined prop as per referee's suggestions}
Suppose that \ref{assmpn: for supersingular case} holds.
Then $\Z^2_{S_p}\left( \Zp(1)/F_{\infty}\right)=0$.
In particular, \ref{conj:B} holds for $(\EC,F_\infty)$ and $\mu_{p^\infty}(F)$ is a universal norm from $F_\infty$.
\end{prop}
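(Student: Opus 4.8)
The plan is to reduce the whole statement to the single vanishing $\Z^2_{S_p}\left(\Zp(1)/F_\infty\right)=0$ and to prove this by an Iwasawa descent argument exploiting hypothesis~\ref{assmpn: for supersingular case}-(iv). Since $F_\infty/F$ is unramified outside $p$, for every finite layer $F_n\subset F_\infty$ the morphism $\mathrm{Spec}\,\mathcal{O}_{F_n}[1/p]\to\mathrm{Spec}\,\mathcal{O}_F[1/p]$ is étale; hence $C^\bullet:=R\varprojlim_n R\Gamma_{\text{\'et}}\left(\mathcal{O}_{F_n}[1/p],\Zp(1)\right)$ is a perfect complex of $\lg[G]$-modules with $H^i(C^\bullet)=\Z^i_{S_p}\left(\Zp(1)/F_\infty\right)$ (via~\eqref{equivalent definition of Z2}), and base change along the augmentation $\lg[G]\twoheadrightarrow\Zp$ yields
\[
C^\bullet\otimes^{L}_{\lg[G]}\Zp\;\simeq\;R\Gamma_{\text{\'et}}\left(\mathcal{O}_F[1/p],\Zp(1)\right).
\]
Running the hyper-$\mathrm{Tor}$ spectral sequence for this quasi-isomorphism, and using that $\Z^j_{S_p}\left(\Zp(1)/F_\infty\right)=0$ for $j\notin\{1,2\}$ together with $\cd_p(G)=2$ to see that no differential touches the $(0,2)$-entry, I would obtain
\[
\Z^2_{S_p}\left(\Zp(1)/F_\infty\right)_G\;\simeq\;H^2_{\text{\'et}}\left(\mathcal{O}_F[1/p],\Zp(1)\right).
\]

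I would then compute the target. Because $\EC/K$ has good supersingular reduction at $p$ and $K$ has class number $1$, Deuring's criterion forces $p$ to be non-split in $K$, so $K$ has a unique prime $\mathfrak p$ above $p$; moreover inertia at $\mathfrak p$ acts on $\EC_p$ through all of $\mathrm{Aut}_{\OK}(\EC_p)$ --- via the level-$2$ fundamental character when $p$ is inert in $K$, with an analogous analysis when $p$ ramifies --- so $\mathfrak p$ is totally ramified, in particular undecomposed, in $F=K(\EC_p)/K$. Thus $F$ has a unique prime above $p$ (which, as in the proof of Theorem~\ref{thm:appendix}, stays undecomposed in $F_\infty/F$). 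Feeding $\#S_p(F)=1$ into the finite-level instance of the four-term exact sequence recalled in \S\ref{section from Kato's work},
\[
0\longrightarrow\Pic\left(\mathcal{O}_F[1/p]\right)_{p^\infty}\longrightarrow H^2_{\text{\'et}}\left(\mathcal{O}_F[1/p],\Zp(1)\right)\longrightarrow\Zp\longrightarrow\Zp\longrightarrow 0,
\]
the last arrow is a surjective $\Zp$-linear endomorphism of $\Zp$, hence an isomorphism, so $H^2_{\text{\'et}}\left(\mathcal{O}_F[1/p],\Zp(1)\right)\simeq\Pic\left(\mathcal{O}_F[1/p]\right)_{p^\infty}$, and this vanishes by~\ref{assmpn: for supersingular case}-(iv). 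Therefore $\Z^2_{S_p}\left(\Zp(1)/F_\infty\right)_G=0$; since $\Z^2_{S_p}\left(\Zp(1)/F_\infty\right)$ is finitely generated over $\lg[G]\cong\Zp\llbracket T_1,T_2\rrbracket$, Nakayama's lemma gives $\Z^2_{S_p}\left(\Zp(1)/F_\infty\right)=0$.

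The two further assertions then follow formally. First, $\Y(\Zp(1)/F_\infty)$ embeds in $\Z^2_{S_p}(\Zp(1)/F_\infty)$, so it is zero, in particular $\lg[G]$-pseudonull. Applying the same base-change argument along $H=\Gal(F_\infty/F_{\cyc})$ gives $\Z^2_{S_p}(\Zp(1)/F_{\cyc})=0$, hence $\Y(\Zp(1)/F_{\cyc})=0$; by the exact sequence of Jannsen used in the proof of Theorem~\ref{thm:appendix} --- applied now to $F_{\cyc}/F$, where $S_{\cs}=\emptyset$, $S_{\ram}=S_p$, and each term $\Ind_{\Gamma}^{\Gamma_v}(\Zp)$ is finitely generated over $\Zp$ because $\Gamma_v$ is open in $\Gamma\cong\Zp$ --- the module $X_{\nr}^{F_{\cyc}}$ is finitely generated over $\Zp$; that is, the \ref{conj:Iwasawa} holds for $F$, equivalently (Theorem~\ref{thm: cs05 thm 3.4}) \ref{conj:A} holds for $\EC/F$. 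Theorem~\ref{thm:appendix} then yields \ref{conj:B} for $(\EC,F_\infty)$. Finally, with $\Z^2_{S_p}(\Zp(1)/F_\infty)=0$ the same spectral sequence also degenerates in degree $1$, giving $H^1_{\text{\'et}}\left(\mathcal{O}_F[1/p],\Zp(1)\right)\simeq\Z^1_{S_p}\left(\Zp(1)/F_\infty\right)_G$; thus the corestriction map from the tower is onto $H^1_{\text{\'et}}\left(\mathcal{O}_F[1/p],\Zp(1)\right)$, and unwinding Kummer theory this says exactly that every element of $\mu_{p^\infty}(F)$ is a norm from each finite layer $F_n$, i.e. $\mu_{p^\infty}(F)$ consists of universal norms from $F_\infty$.

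I expect the main obstacle to be twofold. The conceptual heart is the descent isomorphism $\Z^2_{S_p}(\Zp(1)/F_\infty)_G\simeq H^2_{\text{\'et}}(\mathcal{O}_F[1/p],\Zp(1))$: although it is standard Iwasawa descent, carrying it out cleanly requires the base-change quasi-isomorphism, the hyper-$\mathrm{Tor}$ spectral sequence, and a careful check that the relevant differentials vanish. The second, more arithmetic, subtlety --- the only place where supersingularity really enters --- is the claim that $F=K(\EC_p)$ has a unique prime above $p$; if it had two or more, the four-term sequence would retain a nonzero free $\Zp$-summand, $\Z^2_{S_p}(\Zp(1)/F_\infty)$ would be nonzero, and the whole argument would collapse. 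The remaining steps (Nakayama, the descent to $F_{\cyc}$, and the invocation of Theorem~\ref{thm:appendix}) are formal.
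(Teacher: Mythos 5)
Your proof is correct and follows essentially the same route as the paper: uniqueness of the prime above $p$ in $F$ together with the trivial $p$-part of the $S_p$-class group kills $\Z^2_{S_p}\left(\Zp(1)/F\right)$ via Kato's four-term sequence, and descent (the paper invokes Nekovar's spectral sequence, which is exactly your derived base-change argument) plus Nakayama gives the vanishing over $F_\infty$, after which \ref{conj:B} and the universal-norm claim follow as you indicate via Theorem~\ref{thm:appendix} and the surjectivity of the descent map on $\Z^1$. The only cosmetic differences are that the paper deduces the \ref{conj:Iwasawa} for $F$ from $X_{\cs}^{F_{\cyc}}=0$ by comparing $\mu$-invariants (NSW, Cor.~11.3.16) rather than by your Jannsen-sequence argument, and it quotes \cite[Corollary~3.27]{CS05_CM} for the universal-norm step that you unwind by hand.
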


\begin{proof}
We are grateful to the referee for suggesting the following proof, which is simpler than the one we had in a first version of our manuscript.
Using the Poitou--Tate sequence over $F$ as in~\cite[p.~553 \S 2.4-(1)]{Kat06}, we have that
\begin{equation}\label{eq:PoitouTate_S_p}
0 \longrightarrow \Cl_{S_p}(F)_{p^\infty} \longrightarrow \Z^2_{S_p}\left( \Zp(1)/F\right) \longrightarrow \bigoplus_{v\in S_p(F)} \Zp \longrightarrow \Zp \longrightarrow 0.
\end{equation}
Since $p$ is a prime of supersingular reduction for $\EC/K$, we know that there exists a unique prime above $p$ in $K$.
Moreover, $p$ is totally ramified in the extension $\Gal(F_\infty/K)$, see for example \cite[Section~1]{PR04}.
In particular, there is a unique prime above $p$ in $F$, \emph{i.~e.} $\abs{S_p}=1$.
Combining this with the assumption that the $p$-Sylow subgroup of the $S_p$-class group is trivial yields, through~\eqref{eq:PoitouTate_S_p}, that $\Z^2_{S_p}\left( \Zp(1)/F\right)=0$.
By Nekovar's spectral sequence (see~\cite[Corollary~8.4.8.4-(ii)]{Nek06}), we obtain that
\[
\Z^2_{S_p}\left( \Zp(1)/F_{\infty}\right)_G \simeq \Z^2_{S_p}\left( \Zp(1)/F\right) =0.
\]
Now, employing Nakayama's Lemma we conclude that $\Z^2_{S_p}\left( \Zp(1)/F_{\infty}\right)=0$.

Next, consider the exact sequence (see, for example,~\cite[p.~330 (2.6)]{CS05_CM})
\[
0 \longrightarrow \Y\left( \Zp(1)/F_{\infty}\right) \longrightarrow \Z^2_{S_p}\left( \Zp(1)/F_{\infty}\right) \longrightarrow \bigoplus_{v\in S_p(F_{\infty})} \Zp \longrightarrow \Zp \longrightarrow 0.
\]
It follows from the first part of the proof that $\Y\left( \Zp(1)/F_{\infty}\right)=0$.
Moreover, the same descent argument as above using Nekovar's spectral sequence shows that $\Y\left( \Zp(1)/F_{\cyc}\right)=X_{\cs}^{F_{\cyc}}=0$: in particular, $\mu(X_{\cs}^{F_{\cyc}})=0$.
By \cite[Corollary~11.3.16]{NSW08}, we know that $\mu(X_{\cs}^{F_{\cyc}})=\mu(X_{\nr}^{F_{\cyc}})=0$.
Therefore, we obtain that~\ref{conj:Iwasawa} holds for $F$, and we can apply Theorem~\ref{thm:appendix}, showing that~\ref{conj:B} holds.

To prove the final assertion, consider the exact sequence (see \cite[p.~335 (3.26)]{CS05_CM})
\begin{align*}
0\longrightarrow H_2\left(G, \Z^2_{S_p}\left(\Zp(1)/F_\infty\right) \right) \longrightarrow \Z^1_{S_p}\left( \Zp(1)/F_\infty\right)_G &\xrightarrow{\tau_{F_\infty/F}} \Z^1_{S_p}\left(\Zp(1)/F\right)\\ &\longrightarrow H_1\left(G, \Z^2_{S_p}\left(\Zp(1)/F_\infty\right) \right) \longrightarrow 0.
\end{align*}
We have shown above that $\Z^2_{S_p}\left(\ZZ_p(1)/F_\infty\right)=0$; hence, $\tau_{F_\infty/F}$ is an isomorphism.
It follows that $\mu_{p^\infty}(F)$ is a universal norm from $F_\infty$ (see \cite[Corollary~3.27]{CS05_CM} for details).
\end{proof}

The following corollary provides asymptotics for the growth of the $p$-primary torsion of the fine Selmer group at each layer of the $\Zp^2$-extension.

\begin{cor}
Suppose that \ref{assmpn: for PR case} holds and that either
\begin{enumerate}[label=\textup{(\roman*)}]
\item $\Y(\EC/F_{\cyc})$ is finite; or\label{cor_asympt:finite}
\item $\mathfrak{X}(\EC/F_{\cyc})$ is a finitely generated $\Zp$-module of $\ZZ_p$-rank equal to $2$ and $\EC(F_\infty)$ has a point of infinite order.\label{cor_asympt:2}
\end{enumerate}
Then
\[
\ord_p\Bigl( R\bigl(\EC/F(\EC_{p^n})\bigr)^{\vee}[p^\infty]\Bigr) = O\left(p^{n}\right).
\]
If, moreover, \ref{assmpn: for supersingular case} holds, then 
$\ord_p\left( R\left(\EC/F(\EC_{p^n})\right)^{\vee}[p^\infty]\right)=0$.
\end{cor}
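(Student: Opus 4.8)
The plan is to deduce Conjecture~B, and then transport the resulting pseudonullity of $\Y(\EC/F_\infty)$ down to the finite layers $F_n:=F(\EC_{p^n})$ by a two‑stage descent through the intermediate field $F_nF_{\cyc}$, reading off the growth of $\#\Y(\EC/F_n)$ from elementary Iwasawa theory. First, in case~\ref{cor_asympt:finite} Theorem~\ref{theorem: main theorem CM} applies and in case~\ref{cor_asympt:2} Proposition~\ref{prop: Howson} applies, so in either case \ref{conj:B} holds for $(\EC,F_\infty)$ and $M:=\Y(\EC/F_\infty)$ is a pseudonull $\lg[G]$-module with $G=\Gal(F_\infty/F)\simeq\Zp^2$. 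In both cases \ref{conj:A} holds as well, so $\Y(\EC/F_{\cyc})$ is finitely generated over $\Zp$; combining this with the pseudo-isomorphism $M_H\to\Y(\EC/F_{\cyc})$ of Lemma~\ref{lem: pseudo-isomorphism} (here $H=\Gal(F_\infty/F_{\cyc})\simeq\Zp$) and topological Nakayama, $M$ is finitely generated over $\lg[H]\simeq\Zp\llbracket T\rrbracket$, hence $\lg[H]$-torsion by Proposition~\ref{ven03 P5.4}; write $\mu_H,\lambda_H$ for its Iwasawa invariants over $\Zp\llbracket T\rrbracket$.

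Next I would descend to $F_n$ in two steps. Put $H_n=\Gal(F_\infty/F_nF_{\cyc})\subseteq H$ and $\Gamma_n=\Gal(F_nF_{\cyc}/F_n)\simeq\Zp$; since $\mu_p\subseteq F$ one has $F_{\cyc}=F(\mu_{p^\infty})$, whence $[H:H_n]=[F_nF_{\cyc}:F_{\cyc}]=p^{n+O(1)}$. Running the argument of the Remark following Lemma~\ref{lem: pseudo-isomorphism} verbatim with $F_nF_{\cyc}$ in place of $F_{\cyc}$ — using $\cd_p(H_n)=1$ for surjectivity of the middle restriction map, and the finiteness of $\EC(F_\infty)_{p^\infty}^{H_n}$ and of the local terms $H^1(H_{n,v},\EC_{p^\infty})$ for $v\mid p$, both of which follow from the corresponding statements over $F_{\cyc}$ together with the fact that $H_n$ acts trivially modulo $p$ — should produce a pseudo-isomorphism $M_{H_n}\to\Y(\EC/F_nF_{\cyc})$ whose kernel and cokernel have order $p^{O(n)}$. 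An analogous Mazur-style control statement for the cyclotomic $\Zp$-extension $F_nF_{\cyc}/F_n$ gives a pseudo-isomorphism $\Y(\EC/F_nF_{\cyc})_{\Gamma_n}\to\Y(\EC/F_n)$, again with $p^{O(n)}$ error. Finally, Iwasawa's growth formula applied to the finitely generated torsion $\Zp\llbracket T\rrbracket$-module $M$ gives $\#M_{H_n}=p^{\mu_H p^{n+O(1)}+\lambda_H n+O(1)}=p^{O(p^n)}$; in particular $M_{H_n}$, and then $\Y(\EC/F_nF_{\cyc})$ and $\Y(\EC/F_n)$, are all finite, and composing the two descents gives $\#\Y(\EC/F_n)\leq p^{O(p^n)}$, i.e. $\ord_p\bigl(R(\EC/F(\EC_{p^n}))^{\vee}[p^\infty]\bigr)=O(p^n)$.

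For the last assertion I would work in the setting of~\ref{assmpn: for supersingular case}, where the above only yields finiteness (as $\mu_H=\lambda_H=0$), and instead argue through Iwasawa cohomology. By Proposition~\ref{prop: combined prop as per referee's suggestions}, $\Z^2_{S_p}(\Zp(1)/F_\infty)=0$; since $F_\infty$ trivializes $\EC_{p^\infty}$ and contains $\mu_{p^\infty}$ (Weil pairing), the twisting isomorphism for Iwasawa cohomology over $F_\infty$ — the analogue of~\eqref{eq:tensorE} — gives $\Z^2_{S_p}(T_p(\EC)/F_\infty)\simeq\Z^2_{S_p}(\Zp(1)/F_\infty)\otimes_{\Zp}\bigl(T_p(\EC)\otimes\Zp(-1)\bigr)=0$. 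Applying Nekovar's descent spectral sequence over $\Gal(F_\infty/F_n)$ exactly as in the proof of Proposition~\ref{prop: combined prop as per referee's suggestions}, but at the finite layer (so that the vanishing of $\Z^0$, of $\Z^j$ for $j\geq 3$, and of negative homology forces everything), yields $\Z^2_{S_p}(T_p(\EC)/F_n)=H^2\bigl(G_{S_p}(F_n),T_p(\EC)\bigr)=0$. Poitou--Tate duality for the Cartier-dual pair $(\EC_{p^\infty},T_p(\EC))$ — the analogue of the exact sequence used for $\Zp(1)$ in the excerpt — identifies $R(\EC/F_n)^\vee$ with $\ker\bigl(H^2(G_{S_p}(F_n),T_p(\EC))\to\bigoplus_v H^2(F_{n,v},T_p(\EC))\bigr)$, which is therefore $0$; hence $\ord_p\bigl(R(\EC/F(\EC_{p^n}))^{\vee}[p^\infty]\bigr)=0$.

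The main obstacle is the two-stage descent: because $\EC(F_\infty)_{p^\infty}=\EC_{p^\infty}$ is infinite there is no single Mazur-type descent from $F_\infty$ to $F_n$, and the route through $F_nF_{\cyc}$ only works provided every kernel and cokernel that appears — the local groups $H^1(H_{n,v},\EC_{p^\infty})$, the group $\EC_{p^\infty}^{H_n}$, and the constant term in Iwasawa's growth formula for $M_{H_n}$ — is genuinely $p^{O(n)}$ \emph{uniformly} in $n$; checking this is the technical heart. A secondary subtlety, in the supersingular case, is that the clean twisting isomorphism between $\Zp(1)$ and $T_p(\EC)$ is available only over the trivializing extension, so the passage to the finite layer must go through the exact descent for Iwasawa $H^2$ rather than directly.
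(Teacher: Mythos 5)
Your reduction to \ref{conj:B} (via Theorem~\ref{theorem: main theorem CM} in case~\ref{cor_asympt:finite} and Proposition~\ref{prop: Howson} in case~\ref{cor_asympt:2}) and your entire argument for the supersingular refinement coincide with the paper: the paper likewise twists the vanishing $\Z^2_{S_p}(\Zp(1)/F_\infty)=0$ of Proposition~\ref{prop: combined prop as per referee's suggestions} to get $\Z^2_{S_p}(T_p\EC/F_\infty)=0$, descends to the finite layer by Nekovar's spectral sequence, and uses the inclusion of $\Y\left(\EC/F(\EC_{p^n})\right)$ in $\Z^2_{S_p}\left(T_p\EC/F(\EC_{p^n})\right)$. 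For the first claim, however, the paper simply quotes \cite[Corollary~6.14]{KL21}, and your attempted self-contained substitute has a genuine gap.

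The gap is your assertion that $M_{H_n}$, $\Y(\EC/F_nF_{\cyc})$ and $\Y\left(\EC/F(\EC_{p^n})\right)$ are finite, together with the use of Iwasawa's growth formula for $\#M_{H_n}$: that formula presupposes $M_{H_n}$ is finite for every $n$, and this does not follow from what you have established. In case~\ref{cor_asympt:2} only \ref{conj:A} is available, so $\Y(\EC/F_{\cyc})$ (hence, by Lemma~\ref{lem: pseudo-isomorphism}, $M_H$) may well have positive $\Zp$-rank --- indeed, with $\EC(F_\infty)$ containing a point of infinite order one expects infinite fine Selmer groups at finite layers --- and then $M_{H_n}$ is infinite and your estimate is vacuous. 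Even in case~\ref{cor_asympt:finite}, finiteness of $M_H$ only says that the characteristic series of the torsion $\lg[H]$-module $M$ does not vanish at $T=0$; it can still vanish at $\zeta-1$ for a nontrivial $p$-power root of unity $\zeta$, in which case $M_{H_n}$ is infinite for $n\geq 1$. This is exactly why the corollary bounds only the $p$-primary torsion $R\bigl(\EC/F(\EC_{p^n})\bigr)^{\vee}[p^\infty]$ rather than the order of the full dual fine Selmer group, and why the paper appeals to \cite[Corollary~6.14]{KL21}, which gives the $O(p^n)$ torsion bound in $\Zp^2$-extensions under \ref{conj:A} and \ref{conj:B} without any finiteness at the finite layers. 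To salvage your two-stage descent you would have to carry out both control steps at the level of torsion submodules and, as you yourself flag, make all kernel/cokernel bounds uniform in $n$; doing that amounts to reproving the cited result rather than a shortcut around it.
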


\begin{proof}
\ref{conj:A} holds by assumption in each case and \ref{conj:B} holds by Theorem~\ref{theorem: main theorem CM} in case~\ref{cor_asympt:finite} and by Proposition~\ref{prop: Howson} in case~\ref{cor_asympt:2}
The first claim follows from~\cite[Corollary~6.14]{KL21}.

When \ref{assmpn: for supersingular case} holds, a better estimate can be obtained, and we thank the referee for this observation.
Since $F_\infty$ is the trivializing extension, we have
\[
\Z^2_{S_p}(T_p\EC/F_\infty) \simeq \Z^2_{S_p}(\Zp(1)/F_\infty) \otimes T_p\EC
\]
and thus Proposition~\ref{prop: combined prop as per referee's suggestions} implies that $\Z^2_{S_p}(T_p\EC/F_\infty)=0$.
By Nekovar's spectral sequence, we also know that
\[
\Z^2_{S_p}\left(T_p\EC/F(\EC_{p^n})\right) \simeq \Z^2_{S_p}(T_p\EC/F_\infty)_{G_n} = 0.
\]
Since $\Y\left(\EC/F(\EC_{p^n})\right)$ is contained in $\Z^2_{S_p}\left(T_p\EC/F(\EC_{p^n})\right)$, the result follows.
\end{proof}
\subsection{The noncommutative setting}\label{subsec:non_commutative}
Even though this paper largely treats the commutative case, an analogue of Theorem~\ref{thm:appendix} is valid even in the noncommutative setting.
We would like to thank the referee for pointing this out, and insisting that the general case be included.
Let $S$ be a finite set of primes of $F$ containing the primes above $p$ , the archimedean primes and the primes of bad reduction for $\EC$.
\begin{Th}
\label{non-commutative}
Let $\EC/F$ be an elliptic curve without complex multiplication and $p\geq 5$ be a rational prime.
Assume that $F$ contains the $p$-torsion points of $\EC$.
Suppose that the elliptic curve has either potential ordinary or potential multiplicative reduction at all the primes $v \mid p$, that $F$ contains the $p$-torsion points of $\EC$, and let $F_\infty=F(\EC_{p^\infty})/F$ be the trivializing extension.
Write $S = S_p \cup S_{\bad} \cup S_\infty$ and set $G=\Gal(F_{\infty}/F), H=\Gal(F_{\infty}/F_{\cyc})$.
Then the following assertions are equivalent:
\begin{enumerate}[label = \textup{(\alph*)}]
 \item The~\ref{conj:Iwasawa} is true for $F$ and $X_{\nr}^{F_{\infty}}$ is $\lg$-pseudonull.\label{pt:noncomm_appendix:mu}
 \item \ref{conj:B} holds for $(\EC,F_{\infty})$.\label{pt:noncomm_appendix:conjB}
 \item The~\ref{conj:Iwasawa} is true for $F$ and $\Y(\Zp(1)/F_\infty)$ is $\lg$-pseudonull.\label{pt:noncomm_appendix:Iwa}
\end{enumerate}
\end{Th}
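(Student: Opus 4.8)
The plan is to mirror, essentially verbatim, the argument for the commutative Theorem~\ref{thm:appendix}, isolating the three ingredients that made it work and checking that each survives the passage to a noncommutative $p$-adic Lie group. The only genuinely commutative feature of the earlier proof was the convenient description $\lg[H]\simeq\Zp\llbracket T\rrbracket$, and this is never actually invoked in the pseudonullity arguments, which are purely about Krull dimension of $\lg[G_{\linf/F}]$-modules and behave equally well for Auslander-regular noncommutative Iwasawa algebras by \cite{Ven02,Ven03_Compositio}. So I expect no new essential idea, only bookkeeping.

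First I would reduce the set $S$: since $\EC$ has potential good/multiplicative reduction at the primes above $p$ and $F\supseteq\EC_p$, the primes of bad reduction become unramified in $F_\infty/F$ by the criterion of N\'eron--Ogg--Shafarevich (as $F_\infty/F(\EC_p)=F_\infty/F$ is pro-$p$ and the inertia acts through a finite prime-to-$p$ quotient, which must then be trivial), so one may take $S=S_p\cup S_\infty$ as in the commutative case. Then for the equivalence of \ref{pt:noncomm_appendix:mu} and \ref{pt:noncomm_appendix:Iwa}: I would invoke Jannsen's exact sequence (\cite[Proposition~4.7-(ii)]{Ven03_Compositio}, valid for arbitrary compact $p$-adic Lie $G$)
\[
\bigoplus_{v\in S_{\cs}\cup S_{\ram}}\Ind_{G_{F_\infty/F}}^{G_{F_\infty/F,v}}(\Zp)\longrightarrow X_{\nr}^{F_\infty}\longrightarrow X_{\cs}^{F_\infty}\longrightarrow 0,
\]
note $S_{\cs}=\emptyset$ (primes above $p$ are infinitely, hence not completely, decomposed already in $F_{\cyc}/F$) and $S_{\ram}=S_p$, and use that $F\supseteq\mu_p$ (Weil pairing) to identify $X_{\cs}^{F_\infty}\simeq\Y(\Zp(1)/F_\infty)$ via \eqref{eqn:hydfinesel(1.2)}. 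The remaining point is that $\bigoplus_{v\in S_p}\Ind_{G_{F_\infty/F}}^{G_{F_\infty/F,v}}(\Zp)$ is $\lg$-pseudonull, i.e.\ that each decomposition subgroup $G_{F_\infty/F,v}$ has dimension $\geq 2$ inside the (now at least $3$-dimensional, generically) group $G_{F_\infty/F}$; this follows because $F_{\infty,v}/F_v$ contains both the cyclotomic $\Zp$-extension and the ramified tower coming from the formal group of $\EC$ at $v$ (potential good ordinary or potential multiplicative reduction), giving a $2$-dimensional local Galois group — the noncommutative analogue of \cite[Th\'eor\`eme~3.2]{LN00}, for which one can cite \cite{CS00_book} or argue directly from the local theory.

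For the equivalence of \ref{pt:noncomm_appendix:conjB} and \ref{pt:noncomm_appendix:Iwa}: since $F_\infty$ is the trivializing extension, $G_S(F_\infty)$ acts trivially on $\EC_{p^\infty}$, and the twisting isomorphism $\Y(\EC/F_\infty)\simeq\Y(\Zp(1)/F_\infty)\otimes\EC_{p^\infty}^\vee$ of \cite[p.~825]{CS05} holds as in the excerpt; combined with Theorem~\ref{thm: cs05 thm 3.4} (\ref{conj:A} for $\EC/F$ $\Longleftrightarrow$ \ref{conj:Iwasawa} for $F$, using that $\Gal(F_\infty/F)$ is pro-$p$), and with \cite[Proposition~2.12]{Ven03_Compositio} together with \cite[Proposition~3.4]{OV02}, which say precisely that pseudonullity is preserved under tensoring with a finitely generated $\Zp$-free module carrying a compatible Galois action, one gets the stated equivalence. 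The main obstacle is not conceptual but lies in verifying that the auxiliary results \cite[Prop.~2.12, Prop.~4.7]{Ver03_Compositio}, \cite[Prop.~3.4]{OV02} and the local dimension estimate are indeed stated (or trivially extend) in sufficient generality for a noncommutative compact $p$-adic Lie group with no $p$-torsion; I believe they all are, since the dimension theory of \cite{Ven02} was developed exactly for this setting, so the write-up is mainly a matter of citing carefully and noting that the CM hypothesis played no role beyond guaranteeing $F\supseteq\mu_p$, which is here assumed directly.
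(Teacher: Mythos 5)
Your reduction of $S$ at the start is where the argument breaks. The Néron--Ogg--Shafarevich reasoning you give (``inertia acts through a finite prime-to-$p$ quotient, which must be trivial since $F_\infty/F$ is pro-$p$'') only applies to primes $v\nmid p$ of \emph{potential good} reduction; at a prime $v\nmid p$ where $\EC$ has potentially multiplicative reduction --- which is allowed by the hypotheses and, since $F\supseteq \EC_p$ with $p$ odd, is in fact split multiplicative over $F_v$ --- the inertia image in $\GL_2(\Zp)$ is an \emph{infinite} pro-$p$ (unipotent) group: the fields $F_v(\EC_{p^n})$ contain $p^n$-th roots of the Tate parameter and form an infinitely ramified tower. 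So such primes do not become unramified in $F_\infty/F$, one cannot take $S=S_p\cup S_\infty$, and $S_{\ram}\neq S_p$ in general. Consequently your treatment of Jannsen's sequence is incomplete: the induced modules $\Ind_{G_{F_\infty/F}}^{G_{F_\infty/F,v}}(\Zp)$ at bad primes away from $p$ also occur, and you never show they are pseudonull, since your local dimension estimate is carried out only for $v\mid p$. This is exactly why the paper keeps $S=S_p\cup S_{\bad}\cup S_\infty$ and proves $\dim G_{F_\infty/F,v}\geq 2$ for \emph{all} $v\in S$ via \cite[Lemma~2.8]{Coa99}, using the observation (cf.\ the paragraph after Theorem~5.2 in \cite{OV02}) that bad-but-potentially-good reduction at $v\nmid p$ cannot occur once $F\supseteq\EC_p$ --- so the remaining bad primes are potentially multiplicative, where the decomposition group contains both the cyclotomic direction and the ramified direction generated by $p$-power roots of the Tate parameter. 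Your NOS argument correctly disposes of the potentially good primes, but those were never the problem. The gap is repairable along these lines, but as written the first equivalence is not proved.

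On the equivalence of \ref{pt:noncomm_appendix:conjB} and \ref{pt:noncomm_appendix:Iwa}, note also that the paper does \emph{not} simply transplant the citations \cite[Proposition~2.12]{Ven03_Compositio} and \cite[Proposition~3.4]{OV02} from the commutative Theorem~\ref{thm:appendix}, which is precisely the step you defer to ``verifying the auxiliary results extend.'' Instead it gives a self-contained argument: the~\ref{conj:Iwasawa} (equivalently \ref{conj:A}, via Theorem~\ref{thm: cs05 thm 3.4}, since $G$ is pro-$p$) makes $\Y(\Zp(1)/F_\infty)$ finitely generated over $\lg[H]$; pseudonullity over $\lg$ is then equivalent to $\lg[H]$-torsionness, and a finite free $\lg[H]$-resolution of $\Y(\Zp(1)/F_\infty)$, tensored over $\Zp$ with $\EC_{p^\infty}^\vee$, shows that $\Y(\EC/F_\infty)\simeq\Y(\Zp(1)/F_\infty)\otimes\EC_{p^\infty}^\vee$ has the \emph{same} $\lg[H]$-rank, giving both implications at once. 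Either you supply the noncommutative twisting statements you invoke with proof, or you should argue as the paper does; as it stands this step of your write-up is an unverified appeal rather than a proof.
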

\begin{proof} As in the proof of Theorem~\ref{thm:appendix}, the equivalence between~\ref{pt:noncomm_appendix:mu} and of~\ref{pt:noncomm_appendix:Iwa} will follow once we prove that $X_{\nr}^{F_\infty}$ and $X_{\cs}^{F_\infty}$ are pseudo-isomorphic. We know from \cite[Lemma~2.8]{Coa99} that $G_{F_\infty/F,v}$ has dimension at least $2$ at primes $v\in S$; here we have used the fact that when $v$ does not divide $p$, it is not possible in our setting that $\EC$ has bad but potential good reduction (see for example, \cite[the paragraph after Theorem~5.2]{OV02}).
It follows that $\bigoplus_{v\in S}\Ind_{G_{F_\infty/F}}^{G_{F_\infty/F, v}}\left( \ZZ_p\right)$ is $\lg[G_{F_\infty/F,v}]$-pseudonull.

We next prove the implication~\ref{pt:noncomm_appendix:Iwa}~$\Rightarrow$~\ref{pt:noncomm_appendix:conjB}: assume that assertion \ref{pt:noncomm_appendix:Iwa} holds.
Then, by \cite[Lemma~3.8, p.~825]{CS05}, there is an isomorphism 
\[
\Y(E/{F_{\infty}})\simeq \Y\left(\Zp(1)/F_\infty\right)\otimes \EC_{p^\infty}^\vee,
\]
where the tensor product is over $\Zp,$ and the action of $\Gal(F(\EC_{p^\infty})/F)$ on the right hand-side is the diagonal action.
The hypothesis that the~\ref{conj:Iwasawa} holds for $F$ assures us of the finite generation of $\Y\left(\Zp(1)/F_\infty\right)$ 
as a $\lg[H]$-module.
Hence, to show that assertion~\ref{pt:noncomm_appendix:conjB} holds, it suffices to prove that $\Y(\EC/F_{\infty})$ is a torsion $\lg[H]$-module.

The pseudonullity of $\Y(\EC/F_{\infty})$ as a $\lg$-module is equivalent to $\Y(\EC/F_{\infty})$ being torsion as a $\lg[H]$-module.
By our hypothesis that $\Y\left(\Zp(1)/F_\infty\right)$ is pseudonull, it follows that it is also finitely generated and torsion as a $\lg[H]$-module.
Thus there exists a finite, free $\lg[H]$-resolution of $\Y\left(\Zp(1)/F_\infty\right)$ such that the alternating sum of the $\lg[H]$-ranks of the free modules in the resolution is zero.
Tensoring such a resolution over $\Zp$ with $\EC_{p^\infty}^\vee$ preserves
the exactness and gives a 
finite, free $\lg[H]$-resolution of $\Y\left(\Zp(1)/F_\infty\right)\otimes \EC_{p^\infty}^\vee.$ Further, the alternating sum of the
$\lg[H]$-ranks of the free modules is still zero, whence $\Y\left(\Zp(1)/F_\infty\right)\otimes \EC_{p^\infty}^\vee$ is $\lg[H]$-torsion.
This proves assertion~\ref{pt:noncomm_appendix:conjB}.
Note that this argument also proves the equality of the $\lg[H]$-ranks of $\Y(\ZZ_p(1)/F_{\infty})$ and $\Y(\EC/F_\infty)$.

It remains to prove the implication~\ref{pt:noncomm_appendix:conjB}~$\Rightarrow$~\ref{pt:noncomm_appendix:Iwa}.
Suppose that~\ref{conj:B} is true for $(\EC,F_{\infty}).$ Then~\ref{conj:A} is true
and the dual fine Selmer group $\Y(\EC/F_{\cyc})$ of $\EC$ over the cyclotomic $\ZZ_p$-extension is a finitely generated $\Zp$-module.
The vanishing of the Iwasawa $\mu$-invariant for $F_{\cyc}$ is a consequence of~\cite[Theorem~3.4]{CS05}.
Now suppose that  $\Y(\ZZ_p(1)/F_{\infty})$ is not $\lg[H]$-torsion, and  hence has positive rank as a $\lg[H]$-module.
By the remark above on the equality of $\lg[H]$-ranks, this implies that $\Y(\EC/F_{\infty})$  also has positive $\lg[H]$-rank, contradicting the hypothesis. This completes the proof of the equivalence.
\end{proof}

\section{\texorpdfstring{\ref{conj:B}}{} and the \texorpdfstring{\ref{conj:GGC}}{}}
\label{section actually relating ggc to conj B}
The aim of this section is to clarify the connection between the \ref{conj:GGC} and \ref{conj:B} for CM elliptic curves.
For the sake of brevity, we henceforth refer to the \ref{conj:GGC} as \ref{GGC-short}.

Both conjectures pertain to the pseudonullity of certain Iwasawa modules.
Even though \ref{conj:B} was proposed as a generalization of \ref{GGC-short}, the precise formulation of this connection is rather intricate.
Using Theorem~\ref{thm:appendix}, we make precise in which sense \ref{conj:B} for CM elliptic curves is a generalization of \ref{GGC-short} (see Theorem~\ref{Thm: conj B implies GGC in several}).

Fix an imaginary quadratic field $K$ and denote its Hilbert class field by $K'$.
Given an elliptic curve $\EC/K'$ with CM by an order in $K$, set
\begin{align*}
&F= K'(\EC_p), \quad F_\infty = K'(\EC_{p^\infty})=F(\EC_{p^\infty}), \\
&G=\Gal(F_\infty/F), \quad
\mathcal{G}_\infty =\Gal(F_\infty/K), \quad \mathcal{G}'_\infty =\Gal(F_\infty/K').
\end{align*}
Note that $G\simeq \Zp^2$.
Set $\widetilde{K}$ (\emph{resp.}~$\widetilde{K'}$, $\widetilde{F}$) to be the compositum of all $\Zp$-extensions of $K$ (\emph{resp.}~of $K'$, of $F$).
Since the Leopoldt conjecture is true for imaginary quadratic fields, $\widetilde{K}$ is the unique $\Zp^2$-extension of $K$.
For the rest of this section, we make the following assumption.
\begin{equation}\tag*{\textup{\textbf{Ass~3}}}\label{assmpn: unramified}
\begin{minipage}{0.9\textwidth}
\begin{enumerate}[label=\textup{(}\roman*\textup{)}]
\item $p$ is an odd prime that is unramified in $K$;
\item the prime $p$ is such that $K'\cap \widetilde{K} =K$.
\end{enumerate}
\end{minipage}
\end{equation}
By the theory of complex multiplication, $\mathcal{G}_\infty = G\times \Delta$ and $\mathcal{G}'_\infty = G\times \Delta'$ where $\Delta \simeq \Gal(F/K)$ (\emph{resp.}~$\Delta' \simeq \Gal(F/K')$) is a finite abelian group.
Recall from \cite[Remark on p.~IV-13]{Ser97} that $\Delta'$ is a Cartan subgroup of $\GL_2(\mathbb{F}_p)$ and hence it either has order $p^2-1$ or $(p-1)^2$: in any case, $p\nmid \abs{\Delta'}$.

\begin{Remark}\label{rmk:Ass3}\leavevmode
\begin{enumerate}
\item In fact, it is forced by \ref{assmpn: unramified}-(i) that $\mu_p \not\subset K$.
This can be seen as follows: the only pair $(p,K)$ for which $\mu_p\subset K$ is when $p=3$ and $K=\QQ(\sqrt{-3})$; but this contradicts~\ref{assmpn: unramified}-(i).\label{pt:rmk:Ass3:p_3}
\item We now discuss~\ref{assmpn: unramified}-(ii) in a little more detail.
This assumption is trivially satisfied when $p$ does not divide the class number of $K$.
But observe that, in general, $K'\cap \widetilde{K}$ is contained in the anti-cyclotomic $\Zp$-extension of $K$, denoted by $K_{\ac}$.
For a proof of this fact, see \cite[Lemma~2.2]{Fuj13}.
Therefore, \ref{assmpn: unramified}-(ii) is equivalent to the following condition:
\[
\text{(ii$'$) The prime } p \text{ is such that } K'\cap K_{\ac} =K.
\]
To know more about non-trivial examples where this condition is satisfied, we refer the reader to \cite{Bri07}.
For a specific example, see Example~4 \emph{ibid}.
Moreover, \ref{assmpn: unramified}-(ii) is closely related to the notion of $p$-rationality (see \cite[p.~2133]{Bri07}) but we will not discuss this point any further.
\end{enumerate}
\end{Remark}

Set the notation $K'_\infty$ to denote the composite of the fields $K'$ and $\widetilde{K}$.
The theory of complex multiplication guarantees that $F_\infty =F\widetilde{K} = FK'_{\infty}$.
Recall that $F_\infty$ is the trivializing extension for the Galois representation associated to $T_p\EC$ and it is an $S$-admissible $p$-adic Lie extension.
We note that $F_\infty \subseteq \widetilde{F}$.

Denote by $L(\widetilde{F})$ (\emph{resp.}~${L(F_\infty)}$) the maximal abelian unramified pro-$p$-extension of $\widetilde{F}$ (\emph{resp.}~of $F_\infty$).
Denote by $\scrF_S$ the maximal abelian pro-$p$ extension of $\widetilde{F}$ unramified outside $S$.
Set the notation
\begin{equation}\label{notation for galois groups}
X_{\nr}^{\widetilde{F}}=\Gal\left(L(\widetilde{F})/\widetilde{F}\right), \quad X_{\nr}^{F_\infty}=\Gal\left({L(F_\infty)}/{F_\infty}\right), \quad X_S^{\widetilde{F}} = \Gal\left(\scrF_S/\widetilde{F}\right).
\end{equation}
As in the previous sections, given any extension $\linf/F$, we denote by $M(\linf)$ the maximal unramified abelian $p$-extension of $\linf$ where all primes above $p$ in $\linf$ split completely; this group is related to the fine Selmer group (see \eqref{eqn:hydfinesel(1.2)}).
For most of the discussion, $\linf$ will either be $F_\infty$ or $\widetilde{F}$.
For convenience, the diagram of fields is drawn in Figure~\ref{fig:fields_diagram}.
\begin{figure}
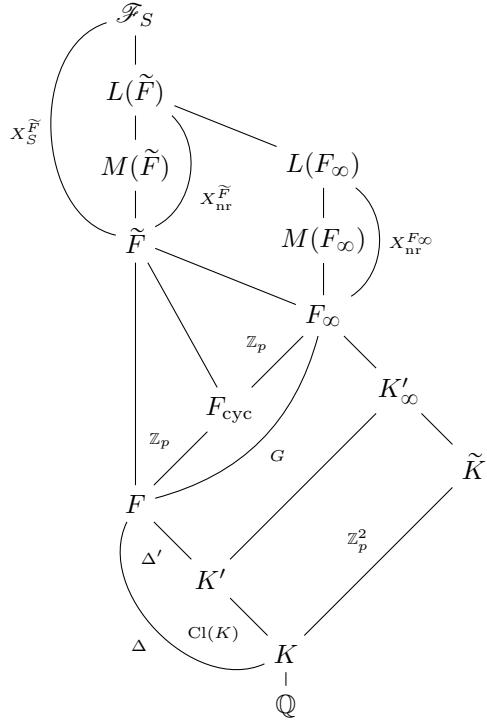

  \centering
  \usebox{\fieldsdiagram}
  \caption{The diagram of fields occurring in Theorem~\ref{Thm: conj B implies GGC in several}}
  \label{fig:fields_diagram}
\end{figure}

Recall the statement of \ref{GGC-short} for $F$ (the statement for $K$ is analogous, by replacing $F,\widetilde{F}, \lg[G_{\widetilde{F}/F}]$ by $K,\widetilde{K}, \lg[G_{\widetilde{K}/K}$], respectively).

\begin{labelledconj}{\textup{GGC}}{GGC-short}
With notation as above, $X_{\nr}^{\widetilde{F}}$ is a pseudonull $\lg[G_{\widetilde{F}/F}]$-module.
\end{labelledconj}

The following results are required to relate \ref{GGC-short} to the pseudonullity of the fine Selmer group.
The first lemma assures pseudonullity over a larger tower, once it holds for a proper subextension.

\begin{Lemma}[Pseudonullity Lifting Lemma]
\label{lem:lifting}
Let $n\geq 3$, let $\mathcal{F}/\QQ$ be a finite Galois extension containing $\mu_p$, and denote by $\widetilde{\mathcal{F}}$ the compositum of all $\Zp$-extensions of $\mathcal{F}$.
Suppose that $\Gal(\widetilde{\mathcal{F}}/\mathcal{F})\simeq \Zp^n$ and let $\mathcal{F}^{(d)}\subsetneq \widetilde{\mathcal{F}}$ be such that $\Gal(\mathcal{F}^{(d)}/\mathcal{F})\simeq \Zp^d$ for some $2\leq d < n$.
If $X_{\nr}^{\mathcal{F}^{(d)}}$ is $\lg[G_{\mathcal{F}^{(d)}/\mathcal{F}}]$-pseudonull then \ref{GGC-short} holds for $\widetilde{\mathcal{F}}/\mathcal{F}$.
\end{Lemma}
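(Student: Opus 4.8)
\emph{Overview and reduction to one step.} The plan is to reduce to the case where $\widetilde{\mathcal F}/\mathcal{F}^{(d)}$ is a single $\Zp$-extension and then to transfer pseudonullity through the dual fine Selmer group of $\Zp(1)$, using Jannsen's exact sequence to pass between $X_{\nr}$ and that fine Selmer group and Nekovár's spectral sequence for the descent. Since $\Gal(\widetilde{\mathcal F}/\mathcal{F}^{(d)})\simeq\Zp^{n-d}$, one interpolates a flag $\mathcal{F}^{(d)}=\mathcal{F}^{(d)}_d\subsetneq\mathcal{F}^{(d)}_{d+1}\subsetneq\dots\subsetneq\mathcal{F}^{(d)}_{n}=\widetilde{\mathcal F}$ with $\Gal(\mathcal{F}^{(d)}_e/\mathcal F)\simeq\Zp^e$ and $\Gal(\mathcal{F}^{(d)}_e/\mathcal{F}^{(d)}_{e-1})\simeq\Zp$, all containing $\mathcal F_{\cyc}$; it then suffices to show that $\lg[G_{\mathcal{F}^{(d)}_e/\mathcal F}]$-pseudonullity of $X_{\nr}^{\mathcal{F}^{(d)}_e}$ implies $\lg[G_{\mathcal{F}^{(d)}_{e+1}/\mathcal F}]$-pseudonullity of $X_{\nr}^{\mathcal{F}^{(d)}_{e+1}}$, and to iterate. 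Fix such an $e$, abbreviate $\mathcal E=\mathcal{F}^{(d)}_e$, $\mathcal E'=\mathcal{F}^{(d)}_{e+1}$, $\Gamma=\Gal(\mathcal E'/\mathcal E)\simeq\Zp$ with topological generator $\gamma$, and $\Lambda_e=\lg[G_{\mathcal E/\mathcal F}]$, $\Lambda_{e+1}=\lg[G_{\mathcal E'/\mathcal F}]$, so that $\dim\Lambda_{e+1}=\dim\Lambda_e+1$.

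\emph{Passage to $\Zp(1)$.} Only primes above $p$ ramify in these extensions and no finite prime of $S$ is completely split, so, exactly as in the proof of Theorem~\ref{thm:appendix}, Jannsen's exact sequence $\bigoplus_{v\mid p}\Ind_{G_{\mathcal E/\mathcal F}}^{G_{\mathcal E/\mathcal F,v}}(\Zp)\to X_{\nr}^{\mathcal E}\to X_{\cs}^{\mathcal E}\to 0$, together with the identification $X_{\cs}^{\mathcal E}\simeq\Y(\Zp(1)/\mathcal E)$ from \eqref{eqn:hydfinesel(1.2)} (valid since $\mathcal E\supseteq\mu_p$), exhibits $\Y(\Zp(1)/\mathcal E)$ as a quotient of $X_{\nr}^{\mathcal E}$, hence $\Lambda_e$-pseudonull. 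By \cite[Théorème~3.2]{LN00}, whose hypothesis holds because $\mathcal E\supseteq\mu_p$, every decomposition group at $v\mid p$ in $G_{\mathcal E/\mathcal F}$ has dimension $\geq 2$, so in the Poitou--Tate sequence $0\to\Y(\Zp(1)/\mathcal E)\to\Z^2_S(\Zp(1)/\mathcal E)\to\bigoplus_{v\in S(\mathcal E)}\Zp\to\Zp\to 0$ the permutation modules $\bigoplus_{v\mid p}\Zp$ are $\Lambda_e$-pseudonull (as is the final $\Zp$, since $e\geq 2$); therefore $\Z^2_S(\Zp(1)/\mathcal E)$ is $\Lambda_e$-pseudonull. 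The same two sequences used at level $e+1$ reduce the inductive step to proving that $\Z^2_S(\Zp(1)/\mathcal E')$ is $\Lambda_{e+1}$-pseudonull.

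\emph{The descent step.} Nekovár's spectral sequence (used already in Proposition~\ref{prop: combined prop as per referee's suggestions}) gives $\Z^2_S(\Zp(1)/\mathcal E')_\Gamma\simeq\Z^2_S(\Zp(1)/\mathcal E)$, which is $\Lambda_e$-pseudonull by the previous step. The crucial additional ingredient is that $\gamma-1$ acts as a nonzerodivisor on $\Z^2_S(\Zp(1)/\mathcal E')$: the Hochschild--Serre spectral sequence for $1\to G_S(\mathcal E')\to G_S(\mathcal E)\to\Gamma\to1$ with coefficients in $\mu_{p^\infty}$ has only the columns $p=0,1$ (as $\cd_p\Gamma=1$) and abuts to $H^\bullet(G_S(\mathcal E),\mu_{p^\infty})$, which vanishes in degree $3$ because $\cd_p(G_S(\mathcal E))\leq 2$ for $p$ odd; hence $H^1(\Gamma,H^2(G_S(\mathcal E'),\mu_{p^\infty}))=0$, i.e. $\gamma-1$ is surjective on $H^2(G_S(\mathcal E'),\mu_{p^\infty})$, and therefore injective on its Iwasawa dual, which agrees with $\Z^2_S(\Zp(1)/\mathcal E')$ up to $\Lambda_{e+1}$-pseudonull summands (again by the dimension bound on decomposition groups from \cite{LN00}). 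For a module $M$ finitely generated over $\Lambda_{e+1}$ on which $\gamma-1$ is a nonzerodivisor one has $\dim_{\Lambda_{e+1}}M=\dim_{\Lambda_e}(M_\Gamma)+1$; applying this to $M=\Z^2_S(\Zp(1)/\mathcal E')$ and using $\dim_{\Lambda_e}(M_\Gamma)\leq\dim\Lambda_e-2$ yields $\dim_{\Lambda_{e+1}}M\leq\dim\Lambda_{e+1}-2$, so $\Z^2_S(\Zp(1)/\mathcal E')$ — and hence $\Y(\Zp(1)/\mathcal E')$, and then $X_{\nr}^{\mathcal E'}$ by Jannsen's sequence once more (using \cite{LN00} at level $e+1$ so that the induced modules there are $\Lambda_{e+1}$-pseudonull) — is $\Lambda_{e+1}$-pseudonull. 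Iterating along the flag proves \ref{GGC-short} for $\widetilde{\mathcal F}/\mathcal F$.

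\emph{Main obstacle.} The delicate point is the bookkeeping in the descent step: one must pin down precisely the relationship between $\Z^2_S(\Zp(1)/\mathcal E')$ and the Iwasawa dual of $H^2(G_S(\mathcal E'),\mu_{p^\infty})$ — this is where the permutation modules $\bigoplus_{v\in S}\Zp$ enter, and their $\lg$-pseudonullity, equivalently the inequality $\dim G_{\mathcal E'/\mathcal F,v}\geq 2$ for $v\mid p$, is the one place where the hypothesis $\mu_p\subseteq\mathcal F$ is genuinely used. Everything else is formal: the structure theory of modules over $\Zp\llbracket\Zp^m\rrbracket$ together with the control theorems (Jannsen, Nekovár, Poitou--Tate) already invoked in the paper.
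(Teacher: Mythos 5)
You are attempting a from-scratch proof, whereas the paper disposes of this lemma by quoting Bandini's lifting theorem (\cite[Theorem~12]{Ban07}) and checking its technical hypotheses via \cite[Th\'eor\`eme~3.2]{LN00} or \cite[Remark~15]{Ban07}; so your write-up must stand on its own, and as written it has a genuine gap exactly at the point those ``technical conditions'' are designed to control. Your argument needs, at every layer $\mathcal E$ of the flag and in particular at the given bottom layer $\mathcal F^{(d)}$, that each prime $v\mid p$ has decomposition group of $\Zp$-rank at least $2$ in $\Gal(\mathcal E/\mathcal F)$: this is what makes the Poitou--Tate terms $\bigoplus_{v\in S(\mathcal E)}\Zp$ and the induced modules in Jannsen's sequence pseudonull, and hence lets you pass from $X_{\nr}^{\mathcal E}$ to $\Y(\Zp(1)/\mathcal E)$ to $\Z^2_S(\Zp(1)/\mathcal E)$ and back. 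You justify this by citing \cite{LN00} ``because $\mathcal E\supseteq\mu_p$'', but \cite[Th\'eor\`eme~3.2]{LN00} bounds decomposition subgroups inside $\Gal(\widetilde{\mathcal F}/\mathcal F)$ (or inside sufficiently large extensions, as in the paper's Theorem~\ref{thm:appendix}, where the local structure of the CM trivializing extension supplies the two independent directions); it says nothing about an arbitrary intermediate $\Zp^{e}$-subextension. The lemma's hypotheses impose no condition on how primes above $p$ decompose in $\mathcal F^{(d)}/\mathcal F$ --- indeed $\mathcal F^{(d)}$ is not even assumed to contain $\mathcal F_{\cyc}$, so your claim that the whole flag contains $\mathcal F_{\cyc}$ (used to force $S_{\cs}=\emptyset$) is also unjustified. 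If some $v\mid p$ has decomposition group of rank $\leq 1$ in $\Gal(\mathcal F^{(d)}/\mathcal F)$, the local term in the Poitou--Tate sequence has Krull dimension $\dim\lg[G_{\mathcal F^{(d)}/\mathcal F}]-1$, so pseudonullity of $\Y(\Zp(1)/\mathcal F^{(d)})$ does not give pseudonullity of $\Z^2_S(\Zp(1)/\mathcal F^{(d)})$, and your induction never gets started. Either you must prove that such a configuration is incompatible with $X_{\nr}^{\mathcal F^{(d)}}$ being pseudonull, or you must rearrange the argument (as Bandini does) so that the decomposition-group input is only needed over $\widetilde{\mathcal F}$, where \cite{LN00} genuinely applies.

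Two further remarks on the parts that do work. The dimension step is correct but you make it harder than it is: for any finitely generated module $M$ over the local ring $\lg[G_{\mathcal E'/\mathcal F}]$ and any element $x=\gamma-1$ of the maximal ideal one has $\dim(M/xM)\geq\dim M-1$, i.e.\ $\dim M\leq\dim(M_\Gamma)+1$, with no regularity hypothesis; so the entire Hochschild--Serre digression proving that $\gamma-1$ is a nonzerodivisor (and the delicate identification of $\Z^2_S$ with a dual ``up to pseudonull summands'', which again leans on the unavailable rank bound at intermediate levels) can be deleted. Moreover, since the top-degree descent isomorphism $\Z^2_S(\Zp(1)/\widetilde{\mathcal F})_{H}\simeq\Z^2_S(\Zp(1)/\mathcal F^{(d)})$, $H=\Gal(\widetilde{\mathcal F}/\mathcal F^{(d)})$, holds in one shot, you could kill the $n-d$ augmentation variables at once and invoke \cite{LN00} only over $\widetilde{\mathcal F}$, eliminating all intermediate layers; after that simplification the single unrepaired point of your outline is the bottom-level conversion from $X_{\nr}^{\mathcal F^{(d)}}$ to $\Z^2_S(\Zp(1)/\mathcal F^{(d)})$ discussed above.
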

\begin{proof}
This lemma is a special case of \cite[Theorem~12]{Ban07}.
Since $\mathcal{F}$ contains $\mu_p$, the technical conditions in the mentioned theorem are satisfied by \cite[Theorem 3.2]{LN00} or \cite[Remark~15]{Ban07}.
\end{proof}

The next result studies pseudonullity of Galois modules under base change.

\begin{Lemma}[Pseudonullity Shifting Down Lemma]
\label{lem:shifting}
Let $\mathcal{F}$ be a number field and let $\mathcal{F}^{(d)}/\mathcal{F}$ be a $\Zp^d$-extension.
Suppose that $\mathcal{F}_1/\mathcal{F}$ is a finite extension and set $\mathcal{K}= \mathcal{F}_1\cdot \mathcal{F}^{(d)}$.
If $X_{\nr}^{\mathcal{K}}$ is a $\lg[G_{\mathcal{K}/\mathcal{F}_1}]$-pseudonull module, then $X_{\nr}^{\mathcal{F}^{(d)}}$ is a $\lg[G_{\mathcal{F}^{(d)}/\mathcal{F}}]$-pseudonull module.
\end{Lemma}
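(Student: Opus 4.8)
The plan is to compare the two unramified Iwasawa modules through a norm map and then use that pseudonullity is stable under subquotients and extensions. First I would fix notation: write $\Gamma=G_{\mathcal{F}^{(d)}/\mathcal{F}}$ and set $\mathcal{F}^{\dagger}=\mathcal{F}_1\cap\mathcal{F}^{(d)}$, a finite subextension of $\mathcal{F}^{(d)}/\mathcal{F}$. Since $\mathcal{F}^{(d)}/\mathcal{F}$ is abelian, restriction identifies $G_{\mathcal{K}/\mathcal{F}_1}$ with the open subgroup $\Gamma^{\circ}:=\Gal(\mathcal{F}^{(d)}/\mathcal{F}^{\dagger})$ of $\Gamma$, and $m:=[\mathcal{K}:\mathcal{F}^{(d)}]=[\mathcal{F}_1:\mathcal{F}^{\dagger}]$. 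Because $\lg[\Gamma]$ is free of finite rank over $\lg[\Gamma^{\circ}]$ and the two Iwasawa algebras have the same Krull dimension $d+1$, the Krull dimension of a finitely generated module is unchanged under restriction of scalars, so a module is $\lg[\Gamma]$-pseudonull exactly when it is $\lg[\Gamma^{\circ}]$-pseudonull. Hence it suffices to prove that $X_{\nr}^{\mathcal{F}^{(d)}}$ is $\lg[\Gamma^{\circ}]$-pseudonull, and I would work over $\lg[\Gamma^{\circ}]=\lg[G_{\mathcal{K}/\mathcal{F}_1}]$ from now on (both $X_{\nr}^{\mathcal{K}}$ and $X_{\nr}^{\mathcal{F}^{(d)}}$ are finitely generated over this ring, so pseudonullity is meaningful).

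The heart of the argument is to construct a $\lg[\Gamma^{\circ}]$-linear map $N\colon X_{\nr}^{\mathcal{K}}\to X_{\nr}^{\mathcal{F}^{(d)}}$ with \emph{finite} cokernel. I would choose cofinal towers $\mathcal{F}^{\dagger}=L_0\subseteq L_1\subseteq\cdots$ inside $\mathcal{F}^{(d)}$ and $\mathcal{F}_1=\mathcal{K}_0\subseteq\mathcal{K}_1\subseteq\cdots$ inside $\mathcal{K}$, matched under the restriction isomorphism so that $\mathcal{K}_n=\mathcal{F}_1L_n$ and $[\mathcal{K}_n:L_n]=m$ for \emph{every} $n$. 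By transitivity of the ideal norm, the norm maps $\Cl(\mathcal{K}_n)\to\Cl(L_n)$ are compatible with the norm transition maps of the two towers, so on $p$-primary parts they assemble into $N$; and since every $\sigma\in\Gal(\mathcal{K}_n/\mathcal{F}_1)$ stabilises the layer $L_n$ (which is abelian over $\mathcal{F}$) and preserves residue degrees, $N$ is $\lg[\Gamma^{\circ}]$-linear. By the norm compatibility of the Artin reciprocity map, $N$ at level $n$ is on $p$-parts the restriction map $\Gal(L(\mathcal{K}_n)/\mathcal{K}_n)\to\Gal(L(L_n)/L_n)$, where $L(\,\cdot\,)$ denotes the maximal unramified abelian pro-$p$ extension; since $L(L_n)\mathcal{K}_n\subseteq L(\mathcal{K}_n)$, this map has image $\Gal\bigl(L(L_n)/L(L_n)\cap\mathcal{K}_n\bigr)$ and cokernel $\Gal\bigl((L(L_n)\cap\mathcal{K}_n)/L_n\bigr)$, a finite $p$-group of order dividing $[\mathcal{K}_n:L_n]=m$, uniformly in $n$. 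Passing to the inverse limit — where the $\varprojlim^{1}$ of the finite kernels at each level vanishes by Mittag--Leffler — shows that $\coker(N)$ is finite.

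Granting this, the conclusion is formal. The image $\mathrm{im}(N)$ is a $\lg[\Gamma^{\circ}]$-submodule of $X_{\nr}^{\mathcal{F}^{(d)}}$ of finite index and, being a quotient of $X_{\nr}^{\mathcal{K}}$, it is $\lg[\Gamma^{\circ}]$-pseudonull by hypothesis. The finite quotient $X_{\nr}^{\mathcal{F}^{(d)}}/\mathrm{im}(N)$, being a finite module over $\lg[\Gamma^{\circ}]\cong\Zp\llbracket T_1,\dots,T_d\rrbracket$, is supported at the maximal ideal and hence is pseudonull as well. Since the pseudonull $\lg[\Gamma^{\circ}]$-modules are closed under extensions, $X_{\nr}^{\mathcal{F}^{(d)}}$ is $\lg[\Gamma^{\circ}]$-pseudonull, and therefore $\lg[\Gamma]$-pseudonull by the reduction in the first paragraph.

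The step I expect to be the main obstacle is the \emph{uniform} finiteness of $\coker(N)$ layer by layer. The quick estimate — that $N$ precomposed with the extension-of-ideals map $X_{\nr}^{\mathcal{F}^{(d)}}\to X_{\nr}^{\mathcal{K}}$ is multiplication by $m$, so that $\coker(N)$ is a quotient of $X_{\nr}^{\mathcal{F}^{(d)}}/mX_{\nr}^{\mathcal{F}^{(d)}}$ — is useless when $p\mid m$, since that quotient is typically infinite; one genuinely needs the class field theory computation of the cokernel at each finite layer together with control of the inverse limit. A secondary point requiring care is the $\Gamma^{\circ}$-equivariance of $N$ when $\mathcal{K}_n/L_n$ fails to be Galois, which is why one works with the relative ideal norm rather than the Galois norm.
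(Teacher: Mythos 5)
Your argument is correct, but it takes a genuinely different route from the paper: the paper does not prove the lemma at all, it simply invokes \cite[Theorem~3.1-(i)]{Kle16}, whereas you give a self-contained class-field-theoretic proof. Your two reductions are sound (pseudonullity is insensitive to replacing $\lg[G_{\mathcal{F}^{(d)}/\mathcal{F}}]$ by the finite free subalgebra $\lg[G_{\mathcal{K}/\mathcal{F}_1}]\cong\Lambda(\Gamma^{\circ})$, and finite modules are pseudonull since $d\geq 1$), the equivariance of the ideal norm is justified exactly by the point you flag (the layers $L_n$ are abelian over $\mathcal{F}$, hence stable under $\Gal(\mathcal{K}_n/\mathcal{F}_1)$), and the identification of $\coker(N_n)$ with $\Gal\bigl((L(L_n)\cap\mathcal{K}_n)/L_n\bigr)$, of order dividing $m$, together with exactness of $\varprojlim$ on systems of finite groups, does give a finite cokernel in the limit; the concluding extension argument is then immediate. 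One remark: the layer-by-layer bookkeeping and the Mittag--Leffler step can be bypassed by working directly at the infinite level, since $L(\mathcal{F}^{(d)})\mathcal{K}/\mathcal{K}$ is unramified abelian pro-$p$, so $L(\mathcal{F}^{(d)})\subseteq L(\mathcal{K})$ and the restriction map $X_{\nr}^{\mathcal{K}}\to X_{\nr}^{\mathcal{F}^{(d)}}$ (which is your $N$) has cokernel $\Gal\bigl((L(\mathcal{F}^{(d)})\cap\mathcal{K})/\mathcal{F}^{(d)}\bigr)$, finite of order dividing $[\mathcal{K}:\mathcal{F}^{(d)}]=m$. What each approach buys: the paper's citation keeps the exposition short and rests on a reference treating such descent statements in greater generality, while your proof makes the mechanism transparent --- in particular why only the cokernel of the norm map, and never its kernel, needs to be controlled, and why the naive ``norm composed with extension is multiplication by $m$'' estimate is useless when $p\mid m$.
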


\begin{proof}
For a proof, see \cite[Theorem~3.1-(i)]{Kle16}.
\end{proof}

The purpose of the next result is to show that \ref{conj:B} is indeed a generalization of \ref{GGC-short}.
We resume the notation introduced at the beginning of this section.

\begin{Th}
\label{Thm: conj B implies GGC in several}
In the setting of~\ref{assmpn: unramified}, suppose that there exists an elliptic curve $\EC/K'$ with CM by an order in $K$ such that \ref{conj:B} holds for $(\EC,F_\infty)$.
Then \ref{GGC-short} holds for $K$.
\end{Th}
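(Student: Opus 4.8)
The plan is to chain Theorem~\ref{thm:appendix} together with the Pseudonullity Shifting Down Lemma (Lemma~\ref{lem:shifting}), using the description $F_\infty=F\widetilde{K}$ furnished by the theory of complex multiplication.

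First I would check that the hypotheses of Theorem~\ref{thm:appendix} are met by the pair $(\EC,F_\infty)$. The curve $\EC$, being defined over $K'$, is defined over $F=K'(\EC_p)$; it has CM by an order in $K$ with $K\subseteq K'\subseteq F$; the group $\Gal(F_\infty/F)=G$ is isomorphic to $\Zp^2$, as noted in the setup; and $F_\infty/F$ is itself an abelian $S$-admissible $p$-adic Lie extension of $F$. Taking $\linf=F_\infty$ in Theorem~\ref{thm:appendix}, the equivalence of its assertions~(a) and~(b) transforms the hypothesis that~\ref{conj:B} holds for $(\EC,F_\infty)$ into the conjunction of the~\ref{conj:Iwasawa} for $F$ and the pseudonullity of $X_{\nr}^{F_\infty}$ as a $\lg[G]$-module. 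Only the latter will be used in what follows.

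Second, I would recall from the CM picture set up above that $F_\infty=F\widetilde{K}$, that $[F:K]<\infty$, and that $\Gal(F_\infty/F)\simeq\Gal(\widetilde{K}/K)\simeq\Zp^2$, where $\widetilde{K}/K$ denotes the unique $\Zp^2$-extension of $K$. I would then apply Lemma~\ref{lem:shifting} with $\mathcal{F}=K$, with the $\Zp^2$-extension $\mathcal{F}^{(2)}=\widetilde{K}$, with the finite extension $\mathcal{F}_1=F$, and with $\mathcal{K}=\mathcal{F}_1\cdot\mathcal{F}^{(2)}=F\widetilde{K}=F_\infty$. Since $X_{\nr}^{\mathcal{K}}=X_{\nr}^{F_\infty}$ is pseudonull over $\lg[G_{\mathcal{K}/\mathcal{F}_1}]=\lg[G_{F_\infty/F}]=\lg[G]$ by the first step, the lemma produces the pseudonullity of $X_{\nr}^{\widetilde{K}}$ over $\lg[G_{\widetilde{K}/K}]$, which is exactly the statement that~\ref{GGC-short} holds for $K$.

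I do not expect a genuine obstacle here: the mathematical substance is already carried by Theorem~\ref{thm:appendix} and Lemma~\ref{lem:shifting}, and what remains is the bookkeeping needed to match them---confirming that $F_\infty$ really is the composite $F\widetilde{K}$ (so that it may serve as $\mathcal{K}$ in Lemma~\ref{lem:shifting}), that $\widetilde{K}/K$ is a $\Zp^d$-extension with $d=2$ as that lemma requires, and that the Iwasawa algebras $\lg[G]$ and $\lg[G_{\widetilde{K}/K}]$ together with the associated notions of pseudonullity correspond correctly on the two sides. Feeding the same module $X_{\nr}^{F_\infty}$ into the Pseudonullity Lifting Lemma (Lemma~\ref{lem:lifting}) instead---after checking that $F$ contains $\mu_p$ (Weil pairing) and that $\Gal(\widetilde{F}/F)\simeq\Zp^n$ for some $n\geq 3$---would yield the companion statement that~\ref{GGC-short} holds for $F=K'(\EC_p)$ as well.
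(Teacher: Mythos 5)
Your argument is correct and takes essentially the same route as the paper: Theorem~\ref{thm:appendix} converts \ref{conj:B} for $(\EC,F_\infty)$ into the $\lg[G_{F_\infty/F}]$-pseudonullity of $X_{\nr}^{F_\infty}$, and Lemma~\ref{lem:shifting} then descends this to $X_{\nr}^{\widetilde{K}}$. The only difference is cosmetic: you descend in a single application of the lemma (taking $\mathcal{F}=K$, $\mathcal{F}_1=F$, $\mathcal{K}=F\widetilde{K}=F_\infty$), whereas the paper performs two successive shifts through $K'_\infty/K'$; since the lemma allows an arbitrary finite extension $\mathcal{F}_1/\mathcal{F}$, your one-step version is equally valid.
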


\begin{proof}
Let $\EC/K'$ be an elliptic curve with CM by an order in $K$ such that~\ref{conj:B} holds for $(\EC, F_\infty)$.
Regarding it as being defined over $F=K'(\EC_p)$, Theorem~\ref{thm:appendix} shows that $X_{\nr}^{F_\infty}$ is $\lg[G_{F_\infty/F}]$-pseudonull.

Applying Lemma~\ref{lem:shifting} with $\mathcal{F}=K'$, $\mathcal{F}_1 = F$, $\mathcal{F}^{(2)}=K'_\infty$, and $\mathcal{K}=F_\infty= FK_{\infty}^\prime$, the $\lg[G_{F_\infty/F}]$-pseudonullity of $X_{\nr}^{F_\infty}$ can be shifted down to $\lg[G_{K'_\infty/K'}]$-pseudonullity of $X_{\nr}^{K^\prime_\infty}$.
Therefore, we have shown that
\[
\text{\ref{conj:B} for } (\EC, F_\infty) \Longrightarrow X_{\nr}^{K^\prime_\infty} \text{ is } \lg[G_{K'_\infty/K'}]\text{-pseudonull}.
\]
Another application of Lemma~\ref{lem:shifting} with $\mathcal{F}=K$, $\mathcal{F}_1 = K'$, $\mathcal{F}^{(2)} = \widetilde{K}$, and $\mathcal{K}= K'_\infty = K'\widetilde{K}$, shows that $\lg[G_{K'_\infty/K'}]$-pseudonullity of $X_{\nr}^{K'_\infty}$ can be shifted down to $\lg[G_{\widetilde{K}/K}]$-pseudonullity of $X_{\nr}^{\widetilde{K}}$.
This is \ref{GGC-short} for $K$.
\end{proof}

\begin{cor}
\label{cor for GGC of K'Ep}
With the same hypotheses of Theorem~\ref{Thm: conj B implies GGC in several},~\ref{GGC-short} holds also for any number field $L$ such that $K'(\mu_p) \subseteq L \subseteq F$.
\end{cor}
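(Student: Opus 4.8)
The plan is to re-run the proof of Theorem~\ref{Thm: conj B implies GGC in several}, but to stop the ``shifting down'' procedure at the intermediate field $L$ instead of pushing it all the way down to $K$, and then to ``lift'' the resulting pseudonullity statement up the tower $\widetilde{L}/L$. As in that proof, the assumption that \ref{conj:B} holds for $(\EC,F_\infty)$ together with Theorem~\ref{thm:appendix} shows that $X_{\nr}^{F_\infty}$ is a $\lg[G_{F_\infty/F}]$-pseudonull module; this is the only consequence of \ref{conj:B} that is needed.

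Next I would single out the appropriate $\Zp^2$-subextension of $F_\infty/L$. Since $[F:K']=\abs{\Delta'}$ is prime to $p$ whereas $K'_\infty/K'$ is pro-$p$, one has $F\cap K'_\infty=K'$, and hence $L\cap K'_\infty=K'$ for every field $L$ with $K'\subseteq L\subseteq F$. Setting $L_\infty:=L K'_\infty=L\widetilde{K}$, it follows that $\Gal(L_\infty/L)\simeq\Gal(K'_\infty/K')\simeq\Gal(\widetilde{K}/K)\simeq\Zp^2$, where the middle isomorphism uses \ref{assmpn: unramified}-(ii); thus $L_\infty/L$ is a $\Zp^2$-extension. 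Moreover $L_\infty\subseteq F_\infty$, the field $L_\infty$ lies in the compositum $\widetilde{L}$ of all $\Zp$-extensions of $L$, and $F_\infty=F\cdot L_\infty$ because $L\subseteq F$. With these facts in hand I would apply Lemma~\ref{lem:shifting} with $\mathcal{F}=L$, $\mathcal{F}_1=F$, $\mathcal{F}^{(2)}=L_\infty$ and $\mathcal{K}=F_\infty$ to deduce that $X_{\nr}^{L_\infty}$ is $\lg[G_{L_\infty/L}]$-pseudonull.

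To finish, write $\Gal(\widetilde{L}/L)\simeq\Zp^n$. From $L_\infty\subseteq\widetilde{L}$ and $\Gal(L_\infty/L)\simeq\Zp^2$ we get $n\geq 2$. If $n=2$ then $\widetilde{L}=L_\infty$ and the previous step already is \ref{GGC-short} for $L$. If $n\geq 3$, then, since $\mu_p\subseteq K'(\mu_p)\subseteq L$, I would invoke Lemma~\ref{lem:lifting} with $\mathcal{F}=L$ and $\mathcal{F}^{(2)}=L_\infty\subsetneq\widetilde{L}$; its hypotheses hold precisely because $L$ contains $\mu_p$, and it yields that $X_{\nr}^{\widetilde{L}}$ is $\lg[G_{\widetilde{L}/L}]$-pseudonull, namely \ref{GGC-short} for $L$. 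Running this for every $L$ between $K'(\mu_p)$ and $F$ gives the corollary. I expect the only points requiring care to be the verification that $L_\infty/L$ really is a $\Zp^2$-extension contained in both $F_\infty$ and $\widetilde{L}$ with $F_\infty=F\cdot L_\infty$ (so that Lemma~\ref{lem:shifting} genuinely applies), and checking that the decomposition-group hypothesis behind Lemma~\ref{lem:lifting} — supplied by \cite[Theorem~3.2]{LN00} once $\mu_p\subseteq L$ — is in force; neither is a serious obstacle, so the corollary is essentially a formal consequence of Theorem~\ref{thm:appendix} and Lemmas~\ref{lem:shifting} and~\ref{lem:lifting}.
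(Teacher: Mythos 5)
Your proposal is correct and follows essentially the same route as the paper: shift the pseudonullity of $X_{\nr}^{F_\infty}$ (from Theorem~\ref{thm:appendix}) down via Lemma~\ref{lem:shifting} with $\mathcal{F}=L$, $\mathcal{F}_1=F$, $\mathcal{F}^{(2)}=LK'_\infty$, $\mathcal{K}=F_\infty$, and then lift via Lemma~\ref{lem:lifting}; your verification that $LK'_\infty/L$ is a genuine $\Zp^2$-extension with $F_\infty=F\cdot LK'_\infty$ is exactly what the paper uses implicitly. The only cosmetic difference is that the paper rules out the case $\Gal(\widetilde{L}/L)\simeq\Zp^2$ outright (since $\mu_p\not\subset K$ forces $L$ to have at least two complex embeddings, so $n\geq 3$ by \cite[Theorem~13.4]{Was97}), whereas you treat $n=2$ as a trivially settled separate case — both are fine.
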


\begin{proof}
Applying Lemma~\ref{lem:shifting} with $\mathcal{F}=L$, $\mathcal{F}_1 = F$, $\mathcal{F}^{(2)}=LK_{\infty}^\prime$, and $\mathcal{K}=F_\infty = F\mathcal{F}^{(2)}$, the $\lg[G_{F_\infty/F}]$-pseudonullity of $X_{\nr}^{F_\infty}$ obtained in Theorem~\ref{thm:appendix} can be shifted down to $\lg[G_{\mathcal{F}^{(2)}/\mathcal{F}}]$-pseudonullity of $X_{\nr}^{\mathcal{F}^{(2)}}$.
Therefore, we have shown that
\[
\text{\ref{conj:B} for } (\EC, F_\infty) \Longrightarrow X_{\nr}^{\mathcal{F}^{(2)}} \text{ is } \lg[G_{\mathcal{F}^{(2)}/L}]\text{-pseudonull}.
\]
As discussed in Remark~\ref{rmk:Ass3}-\eqref{pt:rmk:Ass3:p_3}, $\mu_p\not\subset K$, hence $L\neq K$ and $L$ admits at least two complex embeddings.
Letting $\widetilde{L}$ denote the compositum of all $\ZZ_p$-extensions of $L$,~\cite[Theorem~13.4]{Was97} implies that $\Gal(\widetilde{L}/L)\cong \ZZ_p^n$ for some $n\geq 3$.
Using Lemma~\ref{lem:lifting} with $\mathcal{F}=L$, pseudonullity of $X_{\nr}^{\mathcal{F}^{(2)}}$ as a $\lg[G_{\mathcal{F}^{(2)}/L}]$-module implies \ref{GGC-short} for $L$.
\end{proof}

\section*{Acknowledgements}
D.~K.~is supported by a PIMS Postdoctoral fellowship.
S.~R.~is supported by the NSERC Discovery Grant 2019-03987.
We thank the referee for their timely and thorough reading of the manuscript which helped strengthen some of the results and also contributed to a clearer exposition.
%--------------------------------------------
\bibliographystyle{acm}
\bibliography{references}
\end{document}